\newtheorem{theorem}{Theorem}[section]
\newtheorem{lemma}[theorem]{Lemma}
\newtheorem{proposition}[theorem]{Proposition}
\theoremstyle{remark}
\newtheorem{remark}[theorem]{Remark}
\def\non{\nonumber }
\newcommand \norm[1] {\left\Vert #1\right\Vert}
\def\n{\textbf{\textit{n}}}
\def\R{\mathbb{R}}
\def\G{\mathcal{G}}
\def\F2o{\overline{F_2}}
\def\d{{\rm d}}
\def \l {\langle}
\def \r {\rangle}
\def\u{u}
\def\ddt{\frac{\d}{\d t}}
\def\E{\mathcal{E}}
\def\div{\mathrm{div}}
\DeclareMathOperator*{\esssup}{ess\,sup}
\def \au {\rm}
\def \ti {\it}
\def \jou {\rm}
\def \bk {\it}
\def \no#1#2#3 {{\bf #1} (#3), #2.}
\def \eds#1#2#3 {#1, #2, #3.}
\def \nome#1#2 {{\bf #1}, (#2).}
\newcommand{\sref}[2]{\hyperref[#2]{#1 \ref*{#2}}}
\numberwithin{equation}{section}
\def\d{{\rm d}}
\newcommand{\habil}[1]{}
\newcommand{\uloc}{\operatorname{uloc}}
\newcommand{\eps}{\ensuremath{\varepsilon}}
\newcommand{\vp}{\phi}
\def \au {\rm}
\def \ti {\it}
\def \jou {\rm}
\def \bk {\it}
\def \no#1#2#3 {{\bf #1} (#3), #2.}
\def \eds#1#2#3 {#1, #2, #3.}
\def\@settitle{\begin{center}%
  \baselineskip14\p@\relax
    \huge
  \@title
  \end{center}%
}
\begin{document}

\title[Cahn-Hilliard equation with nonlinear diffusion]
{\emph{On the Cahn-Hilliard equation with nonlinear diffusion: \\ the non-convex case}}
\author[Monica Conti, Stefania Gatti, Andrea Giorgini \& Giulio Schimperna]{Monica Conti, Stefania Gatti, Andrea Giorgini \& Giulio Schimperna}

\address{Politecnico di Milano\\
Dipartimento di Matematica\\
Via E. Bonardi 9, I-20133 Milano, Italy \\
\href{mailto:monica.conti@polimi.it}{monica.conti@polimi.it},
\href{mailto:andrea.giorgini@polimi.it}{andrea.giorgini@polimi.it}}

\address{Università degli Studi di Modena e Reggio Emilia\\
Dipartimento di Scienze Fisiche, Informatiche e Matematiche\\
Via Campi 213/B, I-41125 Modena, Italy
\\
\href{mailto:stefania.gatti@unimore.it}{stefania.gatti@unimore.it}}

\address{Universit\`{a} degli Studi di Pavia\\
Dipartimento di Matematica ``F. Casorati", IMATI - C.N.R. \\
via Ferrata 5, 27100, Pavia, Italy \\
\href{mailto:giulio.schimperna@unipv.it}{giulio.schimperna@unipv.it}}



\begin{abstract}
We investigate the Cahn-Hilliard equation with nonlinear diffusion and non-degenerate mobility modeling phase separation phenomena in complex systems (e.g., crystals and polymers). Previous results in the literature on this model relied on the strong convexity assumption of the gradient part of the energy, which excludes relevant cases. In this work, we remove the convexity condition and establish new qualitative properties of solutions under general assumptions on the diffusion and mobility functions. In two spatial dimensions, we prove uniqueness of weak solutions, their smoothing effect for positive times, and convergence to equilibrium as time tends to infinity. In three dimensions, we show local well-posedness of strong solutions for arbitrary initial data and global existence for data close to energy minimizers, yielding a Lyapunov stability principle. A key ingredient of our analysis is a Lojasiewicz-Simon inequality tailored to the nonlinear diffusion case, which enables us to characterize the longtime dynamics.
\medskip
\\
\noindent
\textbf{Keywords:} Cahn-Hilliard equation, non-degenerate mobility, nonlinear diffusion, Lojasiewicz–Simon inequality, convergence to equilibrium
\medskip
\\
\noindent
\textbf{MSC 2010:} 35K55, 35A02, 35B65, 35B40, 35Q82
\end{abstract}

\maketitle

\section{Introduction}
\label{Sec-Intro}

We study the Cahn-Hilliard equation with nonlinear diffusion
\begin{align}
 \label{CH1}
\partial_t \phi &= \div(b(\phi)\nabla \mu),\\
 \label{CH2}
\mu &= -\div( a(\phi) \nabla \phi) + a'(\phi) \frac{|\nabla \phi|^2}{2}+\Psi'(\phi).
\end{align}
The initial-boundary value problem related to \eqref{CH1}-\eqref{CH2} is considered in $\Omega \times (0,\infty)$, where $\Omega$ is a bounded domain in $\mathbb{R}^d$, with $d=2$ and $d=3$. The system  \eqref{CH1}-\eqref{CH2}  is equipped with the following homogeneous Neumann boundary conditions
\begin{equation}
\label{CH-bc}
\partial_\n\phi= b(\phi) \partial_\n \mu=0 \quad \text{on }  \partial \Omega \times (0,\infty),
\end{equation}
and initial condition
\begin{equation}
\label{CH-ic}
 \phi|_{t=0}=\phi_0 \quad \text{in } \Omega.
\end{equation}
Here, $\n$ is the unit outward normal vector on $\partial \Omega$, and
 $\partial_\n$ denotes the outer normal derivative on $\partial \Omega$.
The state variable of the system is the difference of two fluid concentrations $\phi\colon \Omega \times [0,\infty)\to [-1,1]$ (order parameter).
The function $a\colon [-1,1]\to [0,\infty)$ represents nonlinear diffusion under the assumptions
\begin{equation}
a \in C^2([-1,1]): 0 < a_m \le a(s) \le a_M, \ s \in [-1,1].
\label{a-ndeg}
\end{equation}
The function $b\colon [-1,1]\to [0,\infty)$ is the non-degenerate Onsager mobility, which satisfies
\begin{equation}
b \in C^1([-1,1]): 0 < b_m \le b(s) \le b_M, \ s \in [-1,1].
\label{m-ndeg}
\end{equation}
The homogeneous free energy density $\Psi$ is the Flory-Huggins potential
\begin{equation}
\label{Log}
\Psi(s)=F(s)-\frac{\theta_0}{2}s^2=\frac{\theta}{2}\bigg[ (1+s)\log(1+s)+(1-s)\log(1-s)\bigg]-\frac{\theta_0}{2} s^2, \quad s \in [-1,1],
\end{equation}
where $\theta<\theta_0$ are constant positive parameters. The system \eqref{CH1}-\eqref{CH-bc} is a generalization of the classical Cahn-Hilliard equation, which is recovered when $a\equiv 1$.
The associated Ginzburg-Landau free energy is
\begin{equation}
\label{free-energy}
E(\phi) = \int_\Omega \frac{a(\phi)}{2} |\nabla \phi|^2 + \Psi(\phi) \,\d x,
\end{equation}
and \eqref{CH1}-\eqref{CH2}, together with \eqref{CH-bc}, is the resulting mass conserving gradient flow with respect to the metric of the dual space of $H^1(\Omega)$. The importance of including a nonlinear coefficient in the interfacial free energy density, depending on the order parameter, stems from the modeling of complex systems such as crystals, polymers and hydrogels (see, e.g., \cite{S2}). 
Different forms of $a$ significantly affect the free energy costs of interfaces between components. For instance, specific approximations arising from the description of phase separation include polynomials such as 
$$a(u) = a_0+ a_1u + a_2u^2,$$
where $a_0>0$ (cf. \cite{S36}), or even singular coefficients of de Gennes type as in \cite{S23}. We also mention that more general Ginzburg-Landau free energy (w.r.t. \eqref{free-energy}) are employed in amphiphilic systems (see, e.g., \cite{GS})

The mathematical analysis of system \eqref{CH1}-\eqref{CH-ic} has been intensively studied over the last decades in the constant diffusion case, namely $a\equiv1$. For the interested reader, we mention the works with constant mobility \cite{ABELS2009, AW07, DD, EL1991, GGW2018, MZ},
non-degenerate mobility \cite{BB1999, CGGG2025, S2007, SZ2013}, as well as degenerate mobility \cite{CMN2019, EG1996, DD2016, LMS2012},
and the references therein.
On the other hand, the nonlinear diffusion case $a(\phi)$ has only been partially developed so far. The main contribution in the literature is the work by the fourth author and Pawlow \cite{SP2013} (see, in particular, Sections 5 and 6) referring to the case with constant mobility. Therein, the authors first showed the existence of global weak solutions  under the assumption \eqref{a-ndeg}. The uniqueness of weak solutions and their regularization properties for positive times were achieved under the assumptions
\begin{equation}
\label{convex-condition}
a''(s)\geq 0, \quad \left( \frac{1}{a} \right)''(s)< - \kappa, \quad \forall \, s \in [-1,1],
\end{equation}
where $\kappa>0$. As observed in \cite{SP2013}, the above conditions entails that the gradient part of the energy
$$
J(\phi)= \int_\Omega \frac{a(\phi)}{2} |\nabla \phi|^2 \, \d x
\quad \text{is \textit{strictly convex}} .
$$
Concurrently, a Navier-Stokes/Cahn-Hilliard system, including \eqref{CH1}-\eqref{CH2} with non-degenerate or degenerate mobility, together with nonlinear diffusion, was studied by Abels, Garcke and Depner in \cite{ADG2013, ADG2013-2}, where they proved the existence of global weak solutions. Therefore, several mathematical questions on the model in presence of a \textit{non-convex} gradient energy part remains largely unexplored.

In this work, we investigate fine qualitative properties of the solutions to the Cahn-Hilliard system \eqref{CH1}-\eqref{CH-ic} without imposing the convexity assumption \eqref{convex-condition}. Our aim is twofold: to address uniqueness and regularity of solutions in the general setting  \eqref{a-ndeg}; to characterize the longtime behavior of solutions, a question that has not been explored so far, even in the \textit{convex} case \eqref{convex-condition}.
Under the general setting \eqref{a-ndeg}, our main results are summarized here below:
\begin{itemize}
\item Theorem \ref{well-pos-2D}:
In two space dimensions, the global weak solutions are unique, propagate regularity for positive times, and converge to an equilibrium point of the free energy $E(\phi)$ as $t\to \infty$.
\medskip

\item Theorem \ref{well-pos-3D}: In three space dimensions, strong solutions exist locally in time for arbitrarily large initial data, and globally in time for initial data sufficiently close to energy minimizers. The latter entails a Lyapunov stability principle. Furthermore, such global strong solutions converge to an equilibrium point of the free energy $E(\phi)$ as $t\to \infty$.
\end{itemize}

Before detailing our main results, let us point out some novelties of this work.
First of all, we show that any weak solution to the system exhibits enhanced regularity properties.
A key tool at this stage is a tailored regularity theory
for the associated stationary problem with nonlinear diffusion that we derive in Section \ref{s-stationary} based on a suitable reformulation of the
chemical potential borrowed from \cite{ADG2013}.
In particular, obtaining that any weak solution belongs to $L^4(0,\infty; H^2(\Omega))$ enables us to extend the techniques of \cite{CGGG2025}
to address the combined effects of the nonlinear diffusion $a$ and the variable mobility $b$.
This leads to uniqueness and global regularity results in two spatial dimensions.
Furthermore, based on the improved regularity of weak solution at any positive time, we resolve an open question from \cite[Remark 6.4]{SP2013} regarding the validity of the energy equality for $E$ in the non-convex setting. In summary, our analysis goes beyond the classical Cahn–Hilliard equation with constant coefficients,
providing a comprehensive well-posedness framework for the general nonlinear, non-degenerate system in two dimensions.

Secondly, we establish a crucial tool for analyzing the long-time behavior of solutions, as well as proving Lyapunov stability
in the spirit of Simon \cite{SIMON}: the Lojasiewicz–Simon inequality. For the classical Cahn–Hilliard energy with constant coefficient,
 this inequality was first derived in \cite{HR1999} and later refined in \cite{AW07}. However, such a tool has not yet been developed in the setting of nonlinear diffusion.
In order to present the new Lojasiewicz-Simon inequality related to the free energy \eqref{free-energy} with nonlinear coefficient $a$, we recall the stationary problem corresponding to the evolution system
\eqref{CH1}-\eqref{CH-ic}: 
\begin{alignat}{2}
\label{SSS-i}
-\div(a(\u)\nabla \u)+\frac{a'(\u)}{2} |\nabla \u|^2+\Psi'(\u)&=\overline{\frac{a'(\u)}{2} |\nabla \u|^2+\Psi'(\u)} \quad &&\text{in } \Omega, \\
\label{SSS-ii}
\partial_\n \u&=0 \quad &&\text{on } \partial \Omega,
\end{alignat}
together with
$$
\overline{\u}=\overline{\phi_0},
$$
where the notation $\overline{f}:=\frac{1}{|\Omega|}\int_\Omega f\,\d x$ is the spatial average.
For $m\in (-1,1)$, we denote the set of the stationary states as
\begin{align}
\mathcal{S}_m=&\big\lbrace \u\in H^2(\Omega) \text{ with }
\Psi'(\u) \in L^2(\Omega): \
\u \text{ solves } \eqref{SSS-i}-\eqref{SSS-ii} \text{ and } \overline{\u}=m
    \big\rbrace.\label{SSS1}
\end{align}
We show that
\begin{theorem}[Lojasiewicz--Simon inequality]
\label{LSg-intro}
Let $\Omega\subset\R^d$, $d=2,3$, be a bounded smooth domain. Assume that $a$ is real analytic on the open interval $(-1,1)$. Then, for any $\psi\in \mathcal{S}_m$, there exist $\theta\in (0, \frac12)$, $C_L>0$ and
$\beta >0$ such that the following inequality holds
\begin{align}
|E(u) -E(\psi)|^{1-\theta}\leq C_L \norm{-\div(a(u)\nabla u)+\frac{a'(u)}{2} |\nabla u|^2+\Psi'(u)-\overline{\frac{a'(u)}{2} |\nabla u|^2+\Psi'(u)}}_{L^2(\Omega)},\label{LS0}
\end{align}
for all $u\in H^2(\Omega)$ such that $\overline{u}=m$, $\partial_\n u=0$ on $\partial\Omega$ and $\|u-\psi\|_{H^2(\Omega)}\leq\beta$.
\end{theorem}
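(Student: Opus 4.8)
The plan is to read \eqref{LS0} as a Łojasiewicz--Simon gradient inequality for the free energy $E$ near the critical point $\psi$, and to prove it by a Lyapunov--Schmidt reduction to the classical finite-dimensional Łojasiewicz inequality. I would first fix the functional-analytic setting. Write $\N(u):=-\div(a(u)\nabla u)+\tfrac{a'(u)}{2}|\nabla u|^2+\Psi'(u)-\overline{\tfrac{a'(u)}{2}|\nabla u|^2+\Psi'(u)}$ for the quantity whose $L^2$-norm appears on the right of \eqref{LS0}. Since $\psi\in\S_m$, the regularity theory for the stationary problem \eqref{SSS-i}--\eqref{SSS-ii} developed in Section~\ref{s-stationary} yields $\psi\in H^2(\Omega)$ together with a strict separation property $\|\psi\|_{L^\infty(\Omega)}\le 1-2\delta$ for some $\delta\in(0,1)$. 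As $d\le 3$ one has $H^2(\Omega)\hookrightarrow C(\overline\Omega)\cap W^{1,4}(\Omega)$, hence for $u$ in a sufficiently small $H^2$-ball $B_\beta(\psi)$ one still has $\|u\|_{L^\infty}\le 1-\delta$, so that $a$ and $\Psi$ and their derivatives are evaluated only on the compact subinterval $[-1+\delta,1-\delta]$ of $(-1,1)$, on which $a$ is real analytic by hypothesis and $\Psi$ is real analytic as well. Working on the affine space $\{u\in H^2(\Omega):\overline u=m,\ \partial_\n u=0\}$ and setting $\VV:=\{v\in H^2(\Omega):\overline v=0,\ \partial_\n v=0\}$, an integration by parts identifies $\N(u)\in L^2_{(0)}(\Omega)$ with the $L^2$-Riesz representative of the Fréchet derivative of $E$ restricted to that affine space, i.e.\ $\langle E'(u),v\rangle=(\N(u),v)_{L^2}$ for all $v\in\VV$. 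Using the analyticity of the Nemytskii maps $u\mapsto a(u),a'(u),\Psi'(u)$ on $C(\overline\Omega)$, the embedding $H^2\hookrightarrow W^{1,4}$ (which gives $|\nabla u|^2\in L^2$), and the boundedness of the pointwise products involved, I would check that both $E:B_\beta(\psi)\to\R$ and the gradient map $\N:B_\beta(\psi)\to L^2_{(0)}(\Omega)$ are real analytic. The only genuinely new terms with respect to the classical Cahn--Hilliard energy are $-\div(a(u)\nabla u)=-a(u)\Delta u-a'(u)|\nabla u|^2$ and $\tfrac{a'(u)}{2}|\nabla u|^2$, and they are precisely the ones handled by the $W^{1,4}$-embedding.

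Next I would analyse the linearization $L:=\N'(\psi):\VV\to L^2_{(0)}(\Omega)$. It is a second-order operator whose principal part $-\div(a(\psi)\nabla\cdot)$ is uniformly elliptic by \eqref{a-ndeg}, with first- and zeroth-order coefficients that are bounded because $\|\psi\|_{L^\infty}<1$ keeps $\Psi''(\psi)$ under control. Standard elliptic theory for the Neumann problem (Lax--Milgram plus $H^2$-regularity for $-\div(a(\psi)\nabla\cdot)+\lambda$ with $\lambda$ large), together with the compact embedding $H^2(\Omega)\hookrightarrow\hookrightarrow H^1(\Omega)$ for the lower-order part, shows that $L$ is Fredholm of index $0$; moreover $L$ is self-adjoint with respect to the $L^2$-inner product, since $(Lv,z)_{L^2}=E''(\psi)[v,z]$ is a symmetric bilinear form. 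Consequently $\mathcal K:=\ker L$ is finite dimensional, $\mathcal K\subset H^2$ by elliptic regularity, $\operatorname{ran}L=\mathcal K^{\perp}$ (closed, by Fredholmness and self-adjointness), and, denoting by $P$ the $L^2$-orthogonal projection onto $\mathcal K$, the restriction $L:\VV\cap\mathcal K^{\perp}\to\mathcal K^{\perp}$ is an isomorphism, with $\VV=\mathcal K\oplus(\VV\cap\mathcal K^{\perp})$ a topological direct sum.

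Since $\N(\psi)=0$, applying the analytic implicit function theorem to $(\xi,\eta)\mapsto(\mathrm{Id}-P)\N(\psi+\xi+\eta)$ at $(0,0)$ --- whose $\eta$-differential is the isomorphism above --- produces, after shrinking $\beta$, a real-analytic map $\xi\mapsto\eta(\xi)\in\VV\cap\mathcal K^{\perp}$ defined for $\xi\in\mathcal K$ small, with $\eta(0)=0$ and $(\mathrm{Id}-P)\N(\psi+\xi+\eta(\xi))=0$, i.e.\ $\N(\psi+\xi+\eta(\xi))\in\mathcal K$. The reduced functional $\Gamma(\xi):=E(\psi+\xi+\eta(\xi))$ on a neighbourhood of $0$ in the finite-dimensional space $\mathcal K$ is then real analytic with $\nabla\Gamma(0)=0$, so the classical Łojasiewicz inequality yields $\theta\in(0,\tfrac12]$ and $C>0$ with $|\Gamma(\xi)-E(\psi)|^{1-\theta}\le C\,|\nabla\Gamma(\xi)|$ near $0$. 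To return to a general $u=\psi+w\in B_\beta(\psi)$ with $\overline w=0$, I would set $\xi:=Pw\in\mathcal K$ and $\eta:=w-\xi\in\VV\cap\mathcal K^{\perp}$; since $\eta-\eta(\xi)\in\VV\cap\mathcal K^{\perp}$ and $(\mathrm{Id}-P)\N(\psi+\xi+\eta(\xi))=0$, the fundamental theorem of calculus together with the isomorphism property of $L$ on $\mathcal K^{\perp}$ gives $\|\eta-\eta(\xi)\|_{H^2}\le C\,\|(\mathrm{Id}-P)\N(u)\|_{L^2}\le C\,\|\N(u)\|_{L^2}$ for $\beta$ small. A first-order Taylor expansion of $E$ at $\psi+\xi+\eta(\xi)$ in the direction $\eta-\eta(\xi)$, in which the linear term vanishes because $\N(\psi+\xi+\eta(\xi))\in\mathcal K$ is $L^2$-orthogonal to $\eta-\eta(\xi)$, gives $|E(u)-\Gamma(\xi)|\le C\,\|\eta-\eta(\xi)\|_{H^2}^{2}\le C\,\|\N(u)\|_{L^2}^{2}$, while differentiating $\Gamma$ and using $\N(\psi+\xi+\eta(\xi))\in\mathcal K$ and $\eta'(\xi)\zeta\in\mathcal K^{\perp}$ gives $|\nabla\Gamma(\xi)|=\|\N(\psi+\xi+\eta(\xi))\|_{L^2}\le\|\N(u)\|_{L^2}+C\,\|\eta-\eta(\xi)\|_{H^2}\le C\,\|\N(u)\|_{L^2}$. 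Combining these with the finite-dimensional inequality,
\[
|E(u)-E(\psi)|\le|E(u)-\Gamma(\xi)|+|\Gamma(\xi)-E(\psi)|\le C\|\N(u)\|_{L^2}^{2}+C\|\N(u)\|_{L^2}^{1/(1-\theta)}\le C\|\N(u)\|_{L^2}^{1/(1-\theta)},
\]
where the last step uses $1/(1-\theta)\le 2$ and $\|\N(u)\|_{L^2}\le 1$ (shrink $\beta$ once more); this is \eqref{LS0}, and replacing $\theta$ by any smaller value yields the asserted range $\theta\in(0,\tfrac12)$.

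The main obstacle is the analytic setup of the first step: showing that the gradient map $\N$ is well defined and real analytic from an $H^2$-neighbourhood of $\psi$ into $L^2$. This is where the nonlinear diffusion really bites, through the terms $-\div(a(u)\nabla u)$ and $\tfrac{a'(u)}{2}|\nabla u|^2$, and it forces the use of the embedding $H^2(\Omega)\hookrightarrow W^{1,4}(\Omega)$ (available thanks to $d\le 3$) and, above all, of the strict separation $\|\psi\|_{L^\infty}<1$ from the stationary regularity theory, without which $a$ and $\Psi$ could not be treated as analytic functions on a fixed compact subinterval of $(-1,1)$. A secondary delicate point is the Fredholm and self-adjointness properties of the non-standard operator $L$, which rely in an essential way on the uniform ellipticity \eqref{a-ndeg}; one could alternatively invoke an abstract Łojasiewicz--Simon theorem in the spirit of the references for the classical case, but verifying its hypotheses amounts to the same analysis.
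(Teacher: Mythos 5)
Your proof is correct in outline, but it follows a genuinely different route from the paper. You carry out the Łojasiewicz--Simon argument from scratch: you build the mass constraint into the function space (working on the affine space $\overline{u}=m$ with tangent space $\VV$ of mean-zero $H^2$ Neumann functions), establish Fredholmness and $L^2$-self-adjointness of the constrained linearization $L=\N'(\psi)$ directly, and then perform the explicit Lyapunov--Schmidt reduction to the finite-dimensional Łojasiewicz inequality, with the standard chain of estimates ($\|\eta-\eta(\xi)\|_{H^2}\le C\|\N(u)\|_{L^2}$, quadratic control of $E(u)-\Gamma(\xi)$, identification of $\nabla\Gamma$). The paper instead invokes the abstract constrained Łojasiewicz--Simon theorem of Rupp \cite{RUPP}: it verifies analyticity of $\E$ and $\nabla\E$ on an open set of $V=\{u\in H^2:\partial_\n u=0\}$ (via a careful uniform-analyticity lemma for Nemytskii operators on $C(\overline{\Omega})$, which you assert rather than prove -- this is the content of Lemma \ref{ana-psi} and is where some genuine work hides), shows $\E''(u^\star)$ is Fredholm of index zero on the \emph{unconstrained} space $V$ by splitting off an isomorphism $-\div(\widetilde a(u^\star)\nabla\cdot)+\mathrm{Id}$ plus a compact remainder, and encodes the mass constraint through an analytic submersion $\mathcal{G}$ whose associated projection $P(u)y=y-\overline{y}$ produces exactly the mean-subtracted gradient in \eqref{LS0}. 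Both routes rest on the same analytic inputs -- the separation $\|\psi\|_{L^\infty}<1$ from Proposition \ref{esci} (and in fact $\psi\in H^3$, which the paper uses in the compactness step and which your lower-order compactness argument can avoid but does have available), the embedding $H^2\hookrightarrow C(\overline{\Omega})\cap W^{1,4}$ to tame $|\nabla u|^2$, and uniform ellipticity \eqref{a-ndeg}. What the paper's approach buys is that the Lyapunov--Schmidt machinery and the handling of the constraint are delegated to a citable abstract result, so only the hypotheses need checking; what yours buys is self-containedness, at the cost of having to re-derive and justify each reduction estimate (the orthogonality cancellations, the bi-Lipschitz property of $(\mathrm{Id}-P)\N(\xi,\cdot)$ near $\eta(\xi)$, and the $L^2$-gradient identification on $\mathcal K$) -- all of which you state correctly. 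As you note yourself, the two amount to the same underlying analysis.
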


The proof of Theorem \ref{LSg-intro} relies on the approach proposed in \cite{RUPP}, where the stationary points are interpreted as constrained critical points of the free energy on a manifold $\mathcal{M}$ which accounts for the conservation of mass. It is important to point out that, on the contrary to the classical case with constant diffusion, it is not possible to work in the energy space $H^1(\Omega)$, but we rather need to consider the smaller space $H^2(\Omega)$, which requires an assumption on the vicinity of $u$ to $\psi$ in $H^2(\Omega)$.  This is due to the fact that, in the $H^1(\Omega)$ framework, the Frechet derivative $J'$ is not well defined due to the quadratic term arising from $a(\phi)$.

\medskip
\subsection{Main results}
For $m\in (-1,1)$, we introduce the space of admissible finite energy data
$$
\mathcal{V}_m = \Big\lbrace f \in H^1(\Omega)\cap L^\infty(\Omega): \| f\|_{L^\infty(\Omega)}\leq 1 \text{ and } \overline{f}=m \Big\rbrace,
$$
and we study the evolution system \eqref{CH1}-\eqref{CH-ic} with initial datum in $\mathcal{V}_m$.
In the two dimensional case, our results are collected in the following (some notation used in the statement will be specified in Subsection \ref{funct} below):

\begin{theorem}
\label{well-pos-2D}
Let $\Omega\subset \R^2$ be a bounded domain with smooth boundary. Assume that $\phi_0\in \mathcal{V}_m$. 
\medskip

\begin{enumerate}
\item \underline{Global weak solutions}: There exists a global weak solution $\phi$ to \eqref{CH1}-\eqref{CH-ic} in the following sense:
\begin{align}
\label{WS1}
&\phi \in  L^\infty( 0,\infty; \mathcal{V}_m)\cap L_{\uloc}^4([0,\infty);H^2(\Omega)),
\\
\label{WS2}
&\phi \in L^{\infty}(\Omega\times (0,\infty)) \text{ such that }  |\phi(x,t)|<1  \ \text{a.e. in }\Omega\times (0,\infty),
\\
\label{WS3}
&\partial_t \phi \in L^2(0,\infty; H_{(0)}^{-1}(\Omega)),
\\
\label{WS4}
& \mu \in L_{\uloc}^2([0,\infty);H^1(\Omega)),
\end{align}
and, for any $2\leq p <\infty$,
\begin{equation}
\label{WS5}
\phi\in L_{\uloc}^2([0,\infty);W^{2,p}(\Omega))\quad\text{and}\quad F'(\phi)\in L_{\uloc}^2([0,\infty);L^p(\Omega)),
\end{equation}
where 
\begin{align}
\label{e2}
\l \partial_t \phi,v\r
+ ( b(\phi) \nabla \mu, \nabla v )=0,
\quad \forall \, v \in H^1(\Omega), \ \text{a.e. in } (0,\infty),
\end{align}
and the chemical potential is given by
\begin{equation}
\label{e3}
\mu=-\div \left( a(\phi) \nabla \phi \right) + a'(\phi) \frac{|\nabla \phi|^2}{2}+\Psi'(\phi) \quad \text{a.e. in } \Omega \times (0,\infty).
\end{equation}
Moreover, $\partial_\n \phi=0$ almost everywhere
on $\partial\Omega\times(0,\infty)$, and
$\phi(\cdot,0)=\phi_0$ in $\Omega$.
\medskip

\item \underline{Uniqueness}: Assume that $b \in C^2([-1,1])$. Let $\phi_1, \phi_2$ be two weak solutions starting from $\phi_1^0, \phi_2^0$, respectively, with $\overline{\phi_1^0}=\overline{\phi_2^0}$. Then, for any $T>0$, there exists $C>0$ such that
\begin{equation}
\label{UNIQ}
\norm{\phi_1(t)-\phi_2(t)}_{H_{(0)}^{-1}(\Omega)}
\leq C \norm{ \phi_1^0-\phi_2^0 }_{H_{(0)}^{-1}(\Omega)}\!, \quad \forall \, t \in [0,T].
\end{equation}
The constant $C$ depends only on the parameters of the system, the final time $T$, and the initial free energies $E(\phi_1^0)$ and $E(\phi_2^0)$. In particular, the weak solution is unique.
\medskip


\item \underline{Propagation of regularity}: For any $\tau>0$, there holds
\begin{align}
\label{RS1}
& \phi \in L^\infty(\tau, \infty; H^3(\Omega)), \quad \partial_t \phi \in L_{\uloc}^2([\tau,\infty);H^1(\Omega))\cap L^\infty(\tau,\infty; H^1(\Omega)'),
\\
\label{RS2}
& \phi \in C(\overline{\Omega} \times [\tau, \infty)) \text{ such that }  |\phi(x,t)|\leq 1-\delta  \ \text{everywhere in } \overline{\Omega}\times [\tau,\infty),
\\
\label{RS3}
&\mu \in L^{\infty}(\tau,\infty; H^1(\Omega))\cap L_{\uloc}^2([\tau,\infty);H^3(\Omega)),
\end{align}
for some $\delta \in (0,1)$ depending on $\tau$ and the norm of $\phi_0$.
In particular, $\phi$ satisfies \eqref{CH1}-\eqref{CH2} almost everywhere in $\Omega \times (\tau,\infty)$ and  $\partial_\n \phi=\partial_\n \mu=0$ almost everywhere on $\partial\Omega\times(\tau,\infty)$.
\medskip

\item \underline{Energy equality}:
The weak solution satisfies the energy equality
\begin{equation}
\label{EI}
E(\phi(t))+ \int_0^t 
\left\| \sqrt{b(\phi(s))} \nabla \mu(s)\right\|_{L^2(\Omega)}^2 \, \d s =  E (\phi_0),
\end{equation}
for every $0\leq t < \infty$. Furthermore, $\phi \in C([0,\infty); H^1(\Omega))$.
\medskip

\item \underline{Convergence to equilibrium}: Let $a$ be analytic in $(-1,1)$, $b \in C^2([-1,1])$. Then, there exists a unique $\phi_\infty \in \mathcal{S}_m$ such that
$$\lim_{t\to\infty}\phi(t)=\phi_\infty\quad \text{in } H^2(\Omega).$$
\end{enumerate}
\end{theorem}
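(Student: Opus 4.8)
I would follow the classical Lojasiewicz--Simon scheme (cf.\ \cite{SIMON}): first show that the $\omega$-limit set of the trajectory is a nonempty subset of $\mathcal{S}_m$, then apply Theorem~\ref{LSg-intro} at one of its points, and finally turn the resulting inequality into the statement that $\phi$ is Cauchy as $t\to\infty$. For the first step, I would start from the energy equality \eqref{EI}: $t\mapsto E(\phi(t))$ is nonincreasing and bounded below (by \eqref{a-ndeg} and the continuity of $\Psi$ on $[-1,1]$), hence converges to some $E_\infty$, and $\int_0^\infty\|\sqrt{b(\phi)}\nabla\mu\|_{L^2(\Omega)}^2\,\d t<\infty$. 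If $E(\phi(t_0))=E_\infty$ at some finite $t_0$, then $\nabla\mu\equiv0$ on $(t_0,\infty)$ and \eqref{e2} forces $\partial_t\phi\equiv0$ there, so $\phi$ is stationary and the claim is trivial; hence I may assume $E(\phi(t))>E_\infty$ for all $t$. I would then pick $s_n\to\infty$ with $\|\nabla\mu(s_n)\|_{L^2(\Omega)}\to0$ and use the propagation of regularity \eqref{RS1}--\eqref{RS2} --- $\phi$ bounded in $H^3(\Omega)$, strictly separated from $\pm1$, and (by a standard compactness argument) in $C([\tau,\infty);H^2(\Omega))$ --- to extract a subsequence with $\phi(s_n)\to\psi$ in $H^2(\Omega)$. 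Passing to the limit in \eqref{e3} --- the separation controls $\Psi'(\phi(s_n))$ and the $H^2$-convergence handles the quasilinear terms --- shows $\mu(s_n)\to-\div(a(\psi)\nabla\psi)+\tfrac{a'(\psi)}{2}|\nabla\psi|^2+\Psi'(\psi)$ in $L^2(\Omega)$, while $\nabla\mu(s_n)\to0$; thus this limit is a constant equal to its own average, i.e.\ $\psi\in\mathcal{S}_m$, with $\overline\psi=m$ and $E(\psi)=\lim_n E(\phi(s_n))=E_\infty$.

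Next, since $a$ is analytic in $(-1,1)$, I would invoke Theorem~\ref{LSg-intro} at $\psi$ to get $\theta\in(0,\tfrac12)$, $C_L>0$ and $\beta>0$. Using $\overline{\phi(t)}=m$ (from \eqref{WS1}) and $\partial_\n\phi(t)=0$ (from \eqref{RS1}--\eqref{RS3}), whenever $\|\phi(t)-\psi\|_{H^2(\Omega)}\le\beta$ the right-hand side of \eqref{LS0} is precisely $\|\mu(t)-\overline{\mu(t)}\|_{L^2(\Omega)}$, which by Poincar\'e--Wirtinger and \eqref{m-ndeg} is controlled by $\|\sqrt{b(\phi(t))}\nabla\mu(t)\|_{L^2(\Omega)}$. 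Combining this with \eqref{EI} and with the estimate $\|\partial_t\phi\|_{H^1(\Omega)'}\le b_M\|\nabla\mu\|_{L^2(\Omega)}$ that follows from \eqref{e2}, the absolutely continuous, nonincreasing function $Y(t):=(E(\phi(t))-E_\infty)^{\theta}$ would satisfy, for a.e.\ such $t$,
\[
-\frac{\d}{\d t}Y(t)\;\ge\;c\,\|\nabla\mu(t)\|_{L^2(\Omega)}\;\ge\;c'\,\|\partial_t\phi(t)\|_{H^1(\Omega)'},
\]
for constants $c,c'>0$ depending only on the data, $C_L$ and $\theta$.

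Then I would run the usual trapping argument. Fix an interpolation inequality $\|w\|_{H^2(\Omega)}\le C_I\|w\|_{H^1(\Omega)'}^{\sigma}\|w\|_{H^3(\Omega)}^{1-\sigma}$ with some $\sigma\in(0,1)$, valid for zero-mean $w$, and set $M:=\|\phi\|_{L^\infty(\tau,\infty;H^3(\Omega))}$. Choosing $n$ so large that $\|\phi(s_n)-\psi\|_{H^2(\Omega)}<\beta/2$ and $C_I M^{1-\sigma}((c')^{-1}Y(s_n))^{\sigma}<\beta/2$ --- possible since $Y(s_n)\to0$ --- a continuity argument yields $\|\phi(t)-\psi\|_{H^2(\Omega)}\le\beta$ for all $t\ge s_n$: on a maximal subinterval $[s_n,T^*)$ where this holds, integrating the inequality above gives $\int_{s_n}^{t}\|\partial_t\phi\|_{H^1(\Omega)'}\,\d s\le(c')^{-1}Y(s_n)$ for $t<T^*$, hence $\|\phi(t)-\phi(s_n)\|_{H^1(\Omega)'}\le(c')^{-1}Y(s_n)$, and the interpolation inequality forces $\|\phi(t)-\psi\|_{H^2(\Omega)}<\beta$, contradicting maximality unless $T^*=\infty$. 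Consequently $\partial_t\phi\in L^1(s_n,\infty;H^1(\Omega)')$, so $\phi(t)$ converges in $H^1(\Omega)'$ as $t\to\infty$; interpolating once more against the uniform $H^3$-bound upgrades this to convergence in $H^2(\Omega)$ to some $\phi_\infty$, which must equal $\psi\in\mathcal{S}_m$, and uniqueness is immediate because the whole trajectory converges.

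The main obstacle I anticipate is twofold. First, the identification $\omega(\phi_0)\subset\mathcal{S}_m$ depends crucially on the propagation of regularity \eqref{RS1}--\eqref{RS3}: the uniform $H^3$-bound provides compactness in $H^2(\Omega)$, and the strict separation $|\phi|\le1-\delta$ from \eqref{RS2} is what makes it possible to pass to the limit in the quasilinear and logarithmic terms of \eqref{e3} and to keep $E$ continuous along the sequence --- without it the argument breaks down at the stationary-limit step. Second, the dissipation in \eqref{EI} controls only $\|\partial_t\phi\|_{H^1(\Omega)'}$, so convergence in the weaker ``gradient-flow'' norm is what comes naturally, whereas the $H^2$-convergence asserted in the statement has to be recovered a posteriori by interpolation against the higher-order bound; this mismatch of topologies is also exactly what forces the use of the $H^2$-version of the Lojasiewicz--Simon inequality (Theorem~\ref{LSg-intro}) rather than a classical $H^1$-one.
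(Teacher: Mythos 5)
Your proposal addresses only part (5) of the five-part theorem. Existence of weak solutions, uniqueness, propagation of regularity, and the energy equality are not proved but are used as inputs throughout (you invoke \eqref{EI}, \eqref{RS1}--\eqref{RS3}, the uniform $H^3$ bound and the strict separation at every stage). Relative to the full statement this is the main gap: the paper devotes Sections \ref{S-WEAK}--\ref{S-Strong} to precisely these points, and they are far from routine here because of the quasilinear term $a'(\phi)|\nabla\phi|^2/2$ (e.g.\ the $L^4_{\uloc}H^2$ regularity of weak solutions via the substitution $A(\phi)$, the $\mathcal{G}_{\phi}$-based uniqueness argument, and the recovery of the energy \emph{equality} in the non-convex case).

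For part (5) itself your argument is correct and takes a genuinely different route from the paper's in two places. First, to show that the trajectory accumulates on $\mathcal{S}_m$, you select a sequence $s_n\to\infty$ along which the dissipation $\|\nabla\mu(s_n)\|_{L^2(\Omega)}$ vanishes and pass to the limit in \eqref{e3}; the paper instead runs a LaSalle-type invariance argument, using the semigroup $S(t)$, its $H^2$-continuity (which rests on the uniqueness estimate \eqref{UNIQ}) and the strict Lyapunov property of $E$ to show that the \emph{entire} $\omega$-limit set lies in $\mathcal{S}_m$. Your version is more elementary and does not need the semigroup structure, but it only produces one stationary accumulation point $\psi$. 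Second, and consistently with this, you apply the Lojasiewicz--Simon inequality at that single $\psi$ and close the argument with a trapping/continuation argument based on the interpolation $\|w\|_{H^2}\le C\|w\|_{H^{-1}_{(0)}}^{\sigma}\|w\|_{H^3}^{1-\sigma}$ against the uniform $H^3$ bound; the paper instead covers the compact set $\omega(\phi_0)\subset\mathcal{S}_m$ by finitely many Lojasiewicz balls to obtain uniform constants $\widetilde\theta,\widetilde C$ valid on a neighbourhood of the whole $\omega$-limit set, so that no trapping is needed (the trajectory eventually lies in that neighbourhood because $\mathrm{d}_{H^2}(\phi(t),\omega(\phi_0))\to0$). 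Both schemes are standard and both yield $\nabla\mu\in L^1(t^\star,\infty;L^2(\Omega))$, hence $\partial_t\phi\in L^1(t^\star,\infty;H^{-1}_{(0)}(\Omega))$ and convergence, upgraded to $H^2(\Omega)$ by compactness or interpolation. Interestingly, your single-ball trapping argument is essentially the one the paper itself uses in the three-dimensional Lyapunov stability proof (Section \ref{S-Lyapunov}, Step 3), so nothing in it is foreign to the paper's toolbox.
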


Concerning the evolutionary problem in a three dimensional domain, the main results are stated in the following.
\begin{theorem}
\label{well-pos-3D}
Let $\Omega\subset \R^3$ be a bounded smooth domain, and let $\phi_0\in \mathcal{V}_m$.
\begin{enumerate}
\item \underline{Global weak solutions}: There exists a global weak solution $\phi$ to
\eqref{CH1}-\eqref{CH-ic} such that
\begin{align}
\label{WS1-3}
&\phi \in  L^\infty( 0,\infty; \mathcal{V}_m)\cap L_{\uloc}^4([0,\infty);H^2(\Omega))\cap L_{\uloc}^2([0,\infty);W^{2,6}(\Omega)),
\\
\label{WS2-3}
&\phi \in L^{\infty}(\Omega\times (0,\infty)) \text{ such that }  |\phi(x,t)|<1  \ \text{a.e. in }\Omega\times (0,\infty),
\\
\label{WS3-3}
&\partial_t \phi \in L^2(0,\infty; H_{(0)}^{-1}(\Omega)),
\\
\label{WS4-3}
& \mu \in L_{\uloc}^2([0,\infty);H^1(\Omega)), \quad F'(\phi)\in L_{\uloc}^2([0,\infty);L^6(\Omega)),
\end{align}
which satisfies \eqref{e2}-\eqref{e3}, and $\partial_\n \phi=0$ almost everywhere on $\partial\Omega\times(0,\infty)$, as well as
$\phi(\cdot,0)=\phi_0$ in $\Omega$.

\item \underline{Local strong solutions}: Assume that $\phi_0\in \mathcal{V}_m\cap H^2(\Omega)$, $\partial_\n \phi_0=0$ on $\partial \Omega$ and
\begin{equation}
\label{MM-i}
\left\| -\div \left( a(\phi_0) \nabla \phi_0 \right) + a'(\phi_0) \frac{|\nabla \phi_0|^2}{2} + \Psi'(\phi_0) \right\|_{H^1(\Omega)}\leq M,
\end{equation}
for some $M>0$. Then, there exist $T_M>0$, depending only on $M$, $E(\phi_0)$ and $m$, and a unique strong solution $\phi$ to \eqref{CH1}-\eqref{CH-ic} on $[0,T_M]$
in the following sense
\begin{equation}
\label{SS-phi-i}
\begin{split}
&\phi \in L^\infty(0,T_M; W^{2,6}(\Omega)), \quad
\partial_t \phi \in L_{\uloc}^2(0 ,T_M;H^1(\Omega)),\\
&\phi \in L^{\infty}(\Omega\times (0,T_M)) \quad\text{such that}\quad |\phi(x,t)|<1
\ \text{a.e. }(x,t)\in \Omega\times (0,T_M),\\
&\mu \in L^\infty(0,T_M; H^1(\Omega)),
\quad F'(\phi) \in L^\infty(0,T_M; L^6(\Omega)).
\end{split}
\end{equation}
Besides, $\phi$ fulfills the equations \eqref{CH1}-\eqref{CH2} almost everywhere in $\Omega \times (0,T_M)$.

\medskip

\item \underline{Lyapunov Stability}: Let $a$ be analytic in $(-1,1)$ and let $\psi\in \mathcal{V}_m\cap H^2(\Omega)$ be a local minimizer of the energy $E$ in $\mathcal{V}_m$.
Assume that the initial datum $\phi_0\in \mathcal{V}_m\cap H^2(\Omega)$, $\partial_\n \phi_0=0$ on $\partial \Omega$ and satisfies \eqref{MM-i}.
Then, for any $\epsilon>0$, there exists a constant $\eta\in (0,1)$, depending on $\epsilon$ and $M$, such that, if
$$\|\vp_0-\psi\|_{H^2(\Omega)}\leq \eta,$$ the unique strong solution to  \eqref{CH1}-\eqref{CH-ic} is global in time. Besides, $\phi\in L^\infty(0,\infty; H^3(\Omega))$
and 
\begin{align}
\|\vp(t)-\psi\|_{H^2(\Omega)} \leq \epsilon,\quad \forall\, t\geq 0.\nonumber
\end{align}

\item \underline{Convergence to equilibrium}: Let the assumptions of part (3) be in place. Then, there exists a unique $\phi_\infty \in \mathcal{S}_m$ such that the unique global strong solution $\phi$ to  \eqref{CH1}-\eqref{CH-ic} obtained in part (3) satisfies
$$\lim_{t\to\infty}\phi(t)=\phi_\infty\quad \text{in } H^2(\Omega).$$

\end{enumerate}
\end{theorem}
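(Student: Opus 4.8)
The plan is to combine the Lyapunov stability from part (3), which confines the global strong solution $\phi(t)$ to a small $H^2$-neighborhood $\mathcal{B}_\beta(\psi)$ of the local minimizer $\psi$ for all $t\ge0$, with the Lojasiewicz--Simon inequality of Theorem \ref{LSg-intro} applied at the point $\psi$. First I would record the energy identity for the strong solution: since $\phi$ satisfies \eqref{CH1}--\eqref{CH2} a.e.\ and enjoys the regularity \eqref{SS-phi-i} (enhanced to $L^\infty(0,\infty;H^3(\Omega))$ by part (3)), testing \eqref{CH1} by $\mu$ and \eqref{CH2} by $\partial_t\phi$ yields
\begin{equation}
\ddt E(\phi(t)) + \left\|\sqrt{b(\phi(t))}\,\nabla\mu(t)\right\|_{L^2(\Omega)}^2 = 0,\nonumber
\end{equation}
so $t\mapsto E(\phi(t))$ is nonincreasing, bounded below (by $E$ being coercive on $\mathcal{V}_m$), hence converges to a limit $E_\infty\ge E(\psi)$; moreover, shrinking $\eta$ if necessary in part (3), we may assume $E_\infty = E(\psi)$, because $\psi$ being a local minimizer forces $E(\phi(t))\ge E(\psi)$ on the neighborhood and $E(\phi(t))\downarrow E_\infty$ while $\|\phi(t)-\psi\|_{H^2}\le\epsilon$ can be made to force $E_\infty=E(\psi)$ via lower semicontinuity along a subsequence $t_n\to\infty$ with $\phi(t_n)\rightharpoonup$ some limit in $\mathcal{S}_m$.

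The core of the argument is the standard Lojasiewicz--Simon trajectory estimate. Set $H(t) := (E(\phi(t))-E(\psi))^\theta$, which is well-defined and nonincreasing once $E(\phi(t))\ge E(\psi)$. Differentiating and using the energy identity together with \eqref{LS0} at $\psi$ (valid since $\phi(t)\in H^2(\Omega)$, $\overline{\phi(t)}=m$, $\partial_\n\phi(t)=0$, and $\|\phi(t)-\psi\|_{H^2}\le\beta$ thanks to part (3) with $\epsilon\le\beta$), I would obtain, on the set where $E(\phi(t))>E(\psi)$,
\begin{equation}
-\ddt H(t) = \theta\,(E(\phi(t))-E(\psi))^{\theta-1}\left\|\sqrt{b(\phi(t))}\,\nabla\mu(t)\right\|_{L^2(\Omega)}^2 \ge \frac{\theta}{C_L}\,\frac{\left\|\sqrt{b(\phi(t))}\,\nabla\mu(t)\right\|_{L^2(\Omega)}^2}{\left\|\mu(t)-\overline{\mu(t)}\right\|_{L^2(\Omega)}}.\nonumber
\end{equation}
Here I must bridge the $L^2$-norm of $\mu-\overline\mu$ appearing on the right of \eqref{LS0} with the dissipation $\|\sqrt{b(\phi)}\nabla\mu\|_{L^2}$: by the non-degeneracy \eqref{m-ndeg} and Poincaré--Wirtinger, $\|\mu-\overline\mu\|_{L^2}\le C_P\|\nabla\mu\|_{L^2}\le C_P b_m^{-1/2}\|\sqrt{b(\phi)}\nabla\mu\|_{L^2}$, so the right-hand side is bounded below by $c\,\|\sqrt{b(\phi)}\nabla\mu\|_{L^2}$ with $c=\theta b_m^{1/2}/(C_L C_P)$. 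Integrating in time then gives $\int_0^\infty\|\sqrt{b(\phi(t))}\nabla\mu(t)\|_{L^2}\,\d t \le c^{-1}H(0) < \infty$ (with the usual care on the possibly empty set where $E(\phi(t))=E(\psi)$, on which $\phi$ is already stationary and the dissipation vanishes). Since $\partial_t\phi = \div(b(\phi)\nabla\mu)$, this yields $\partial_t\phi\in L^1(0,\infty; H^1(\Omega)')$, whence $\phi(t)$ is Cauchy in $H^1(\Omega)'$ as $t\to\infty$ and converges to some $\phi_\infty$.

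Finally I would upgrade the convergence to $H^2(\Omega)$. By part (3), $\{\phi(t)\}_{t\ge0}$ is bounded in $H^3(\Omega)$; hence along any sequence $t_n\to\infty$ a subsequence converges weakly in $H^3$ and strongly in $H^2$ to a limit, which by the $H^1(\Omega)'$-convergence must be $\phi_\infty$, so the whole trajectory converges to $\phi_\infty$ strongly in $H^2(\Omega)$. Passing to the limit in the equations (using $F'(\phi(t))$ bounded via the strict separation \eqref{RS2}-type estimate implicit in part (3)) shows $\phi_\infty$ solves \eqref{SSS-i}--\eqref{SSS-ii} with $\overline{\phi_\infty}=m$, i.e.\ $\phi_\infty\in\mathcal{S}_m$; uniqueness of the limit is exactly the convergence of the whole trajectory. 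The main obstacle I anticipate is the careful handling of the Lojasiewicz argument near times where the energy may already equal $E(\psi)$, and ensuring that the $\beta$ from Theorem \ref{LSg-intro} (which depends on $\psi$) is compatible with the $\epsilon$-smallness delivered by part (3)—this is resolved simply by choosing $\epsilon\le\beta$ at the outset, which part (3) permits since $\eta$ may be taken as small as needed.
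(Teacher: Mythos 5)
Your proposal addresses only part (4) of the four-part theorem, taking parts (1)--(3) as given; within that scope it is correct and follows essentially the same Lojasiewicz--Simon strategy as the paper. The paper proves part (4) by invoking verbatim the two-dimensional argument of Section \ref{s-convergence}, which first shows $\omega(\phi_0)\subset\mathcal{S}_m$ via a LaSalle-type argument and then covers the compact $\omega$-limit set by finitely many balls on which \eqref{LS0} holds, extracting a uniform pair $(\widetilde{\theta},\widetilde{C})$. You instead anchor the inequality at the single point $\psi$ --- legitimate, since Lemma \ref{chmini} gives $\psi\in\mathcal{S}_m$ and part (3) keeps $\|\phi(t)-\psi\|_{H^2(\Omega)}\leq\epsilon\leq\beta$ for all $t\geq 0$ --- which dispenses with the $\omega$-limit covering; this is precisely the device the paper already uses inside Step 3 of the proof of part (3), so your route is a mild streamlining rather than a different method. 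Two small remarks. First, your assertion that ``shrinking $\eta$ we may assume $E_\infty=E(\psi)$'' is neither justified (a priori the trajectory could converge to a nearby stationary point of strictly larger energy) nor needed: integrating $-\ddt H(t)\geq c\,\|\sqrt{b(\phi)}\nabla\mu\|_{L^2(\Omega)}$ only requires $H\geq 0$, i.e.\ $E(\phi(t))\geq E(\psi)$, which holds because $\epsilon$ is below the radius of local minimality. Second, the identification $\phi_\infty\in\mathcal{S}_m$ deserves the explicit observation that $\|\nabla\mu(t_n)\|_{L^2(\Omega)}\to 0$ along a subsequence (from the finiteness of the dissipation integral), so that the chemical potential of the $H^2$-limit is constant; with that line added, the argument is complete.
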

\begin{remark}
We point out that the assumption $\psi\in H^2(\Omega)$ is not a restriction. Indeed, we prove in Section \ref{S-Lyapunov} that any energy minimizer satisfies such regularity.
\end{remark}
\section{Mathematical setting}
\label{S-Mathset}

\subsection{Function spaces}
\label{funct}
For $X$ Banach space, the set $X'$ denotes the dual space and $\l \cdot , \cdot \r_{X',X}$ the duality product. For $ 1 \le p \le \infty$ and an interval $I \subseteq [0, \infty)$, $L^p( I; X)$ is the space of all Bochner $p-$integrable functions defined from $I$ into $X$. For $1 \le p < \infty$, the set $L^p_{\rm uloc}([0,\infty), X)$ denotes the space of functions $f \in L^p(0,T;X)$ for any $T>0$ such that there exists $C>0$ such that
$$\norm{ f }_{L^p_{\rm uloc}([0,\infty), X)} :=
\sup_{t \ge 0} \int_t^{t+1}\norm{f(s)}_X^p \, \d s <\infty.
$$
The set of continuous functions $f: I \to X$ is denoted by $C(I;X)$. 
The space $BC_{\rm w}(I;X)$ consists of the topological vector space of
all $X$-valued bounded and weakly continuous functions (i.e., the map $t\in I \mapsto \langle \phi, f(t) \rangle_{X^*, X}$ is continuous for all $\phi \in X^*$). 
Finally, the set $BUC(I; X)$ is the subspace of all $X$-valued bounded and uniformly continuous functions with respect to the supremum norm.

Let $\Omega$ be a bounded smooth domain in $\R^d$, $d=2,3$. For any positive real $k$ and $1\leq p \leq \infty$, we denote by $W^{k,p}(\Omega)$ the Sobolev space of function in $L^p(\Omega)$ such that the distributional derivative of order up to $k$ is an element of $L^p(\Omega)$.  We use the notation $H^k(\Omega)$ for the Hilbert space $W^{k,2}(\Omega)$ with norm $\norm{\cdot}_ {H^k(\Omega)}$. In particular, for $k=1$, we denote the duality product between $H^1(\Omega)'$ and $H^1(\Omega)$ by $\l \cdot, \cdot \r$.

Given the definition of spatial average
\begin{equation*}
\overline{f}= \frac{1}{|\Omega|} \int_{\Omega} f(x) \, \d x \quad \text{if } f \in L^1(\Omega),
\end{equation*}
for any $m \in \mathbb{R}$, we introduce the function space
$$
H^1_{(m)}(\Omega)=\left\lbrace f \in H^1(\Omega) : \ \overline{f}=m \right\rbrace \! .
$$
If $m=0$, we recall the Poincar\'{e}-Wirtinger inequality
\begin{equation}\label{poincare} 
\norm{f}_{L^2(\Omega)} \le C_P  \norm{\nabla f}_{L^2(\Omega)} \!, \quad \forall \, f \in H_{(0)}^1(\Omega),
\end{equation}
where $C_P$ only depends on $\Omega$. Thus, 
$H^1_{(0)}(\Omega)$ is a Hilbert space endowed with inner product $(f,g)_{H^1_{(0)}(\Omega)}=(\nabla f, \nabla g)$ and corresponding norm $\| f\|_{H^1_{(0)}(\Omega)}=\| \nabla f \|_{L^2(\Omega)}$.
Extending the definition of total mass as
\begin{equation*}
\overline{f} = \frac{1}{|\Omega|} \l f,1 \r \quad \text{if } f \in H^1(\Omega)',
\end{equation*}
we define the Hilbert space
\begin{equation}
\label{Spazi-mn}
H^{-1}_{(0)}(\Omega)=\left\lbrace f  \in H^1(\Omega)' : \ \overline{f}=0 \right\rbrace \!,
\end{equation}
with corresponding norm defined as $\| f\|_{H^{-1}_{(0)}(\Omega)}:= \| f\|_{H^1(\Omega)'}$.

We also report the generalized Poincar\'{e} inequality (see \cite[Chapter II, Section 1.4]{TEMAM}):
there exists a positive constant $C=C(\Omega, \eta_\ast,\eta^\ast)$ such that
\begin{equation}
\label{normH1-2}
\| u\|_{H^1(\Omega)}\leq C \Big(\| \nabla u\|_{L^2(\Omega)}^2+ \Big|\int_{\Omega}  \eta u \, \d x\Big|^2\Big)^\frac12, \quad \forall \, u \in H^1(\Omega),
\end{equation}
where $\eta \in L^\infty(\Omega)$ is such that $0<\eta_\ast\leq \eta(x)\leq \eta^\ast$ for almost every $x\in \Omega$. 

Furthermore, the following Gagliardo-Nirenberg interpolation inequalities hold:
\begin{equation}
\label{GN-utile}
\| \nabla u \|_{L^{2p}(\Omega)} \leq C \| u\|_{L^\infty(\Omega)}^\frac12 \| u\|_{W^{2,p}(\Omega)}^\frac12, \quad \forall \, u \in W^{2,p}(\Omega), \ 2\leq p \leq \infty,
\end{equation}
which, in particular, gives
\begin{equation}
\label{inter-W14}
\| \nabla u \|_{L^4(\Omega)} \leq C \| u\|_{L^\infty(\Omega)}^\frac12 \| u\|_{H^2(\Omega)}^\frac12,
\end{equation}
Lastly, we have 
\begin{align}
\label{lady_2}
\| u \|_{L^4(\Omega)} \leq C \| u\|_{L^2(\Omega)}^\frac12 \| u\|_{H^1(\Omega)}^\frac12, \quad d=2,
\\
\label{lady_3}
\|  u \|_{L^3(\Omega)} \leq C \| u\|_{L^2(\Omega)}^\frac12 \| u\|_{H^1(\Omega)}^\frac12, \quad d=3.
\end{align}

\subsection{Elliptic problems.} 
We introduce the solution operator of the Laplace problem with homogeneous Neumann boundary condition $\G: H^{-1}_{(0)}(\Omega) \to H^{1}_{(0)}(\Omega)$ such that, for every $f \in H^{-1}_{(0)}(\Omega)$,  $\G f \in H^{1}_{(0)}(\Omega)$ satisfies
\begin{equation}\label{Gdef}
(\nabla \G f, \nabla v) = \l f, v \r, \quad \forall \, v \in H^{1}_{(0)}(\Omega).
\end{equation}
For any $f \in H^{-1}_{(0)}(\Omega)$, $\norm{\nabla \G f}_{L^2(\Omega)}$ is a norm on $H^{-1}_{(0)}(\Omega)$, that is equivalent to $\| f\|_{H^{-1}_{(0)}(\Omega)}$. Furthermore,
\begin{equation}\label{interp}
| \l f, v \r | \leq  \norm{\nabla \G f}_{L^2(\Omega)} \norm{\nabla v}_{L^2(\Omega)}, \quad \forall \, f \in H^{-1}_{(0)}(\Omega), v \in H^{1}_{(0)}(\Omega).
\end{equation}
Next, assuming \eqref{m-ndeg} and given a measurable function $q: \Omega \to [-1,1]$, we introduce the following elliptic problem
\begin{equation}\label{epgq}
\begin{cases}
-\div(b(q) \nabla u) = f \quad & \text{in}\ \Omega \\
b(q) \partial_\n u = 0 \quad & \text{on} \ \partial\Omega.
\end{cases}
\end{equation}
Similarly to the definition of $\G$ in \eqref{Gdef}, we define the solution operator $\G_q:  H^{-1}_{(0)}(\Omega)  \to  H^{1}_{(0)}(\Omega)$ as follows:
for every $f \in H^{-1}_{(0)}(\Omega)$,  $\G_q f \in H^{1}_{(0)}(\Omega)$ such that
\begin{equation}\label{def G_q}
( b(q) \nabla \G_q f ,\nabla v ) = \l f, v \r, \quad \forall \, v \in  H^{1}_{(0)}(\Omega).
\end{equation}
It is immediate to see that
\begin{equation}\label{equivalence of norm}
\sqrt{b_m} \norm{\sqrt{b(q)} \nabla \G_q f}_{L^2(\Omega)}
\le \norm{ \nabla \G f}_{L^2(\Omega)}
\le \sqrt{b_M}\norm{\sqrt{b(q)} \nabla \G_q f}_{L^2(\Omega)}\!, \quad \forall \, f \in  H^{-1}_{(0)}(\Omega),
\end{equation}
from which it follows that $\norm{\nabla \G_q f}_{L^2(\Omega)}$ is a norm on $ H^{-1}_{(0)}(\Omega)$ that is equivalent to $\norm{\nabla \G f}_{L^2(\Omega)}$.

We now report some elliptic estimates related to problem \eqref{epgq}. Let $f \in L^2(\Omega)	\cap  H^{-1}_{(0)}(\Omega)$. Assuming that $b\in C^1([-1,1])$ and $q \in H^2(\Omega)$,  we have
\begin{align}
\label{H2}
\| \mathcal{G}_{q} f\|_{H^2(\Omega)}\leq C \left( \| \nabla q\|_{L^2(\Omega)} \| q\|_{H^2(\Omega)} \| \nabla \mathcal{G}_q f\|_{L^2(\Omega)}+ \| f\|_{L^2(\Omega)} \right),  \quad d=2.
\\
\label{H2-3D}
\| \mathcal{G}_{q} f\|_{H^2(\Omega)}\leq C \left( \| q\|_{H^2(\Omega)}^2 \| \nabla \mathcal{G}_q f\|_{L^2(\Omega)}+ \| f\|_{L^2(\Omega)} \right),  \quad d=3.
\end{align}
More properties of $\G_q$ can be found in \cite{BB1999} and \cite{CGGG2025}.
\medskip

The following is a classical result in the theory of elliptic systems (see, e.g., \cite[Lemma 4]{ABELS2009}).
\begin{proposition}
\label{NP-nc}
Let $d=2,3$. Assume that $K \in C^1(\mathbb{R})$ such that
$0<\underline{K}\leq K(s)\leq \overline{K}$ for all $s \in \mathbb{R}$ and let 
$\varphi: \Omega \to \mathbb{R}$ be a measurable function.
Consider the Neumann problem
\begin{equation}
\label{NP-2}
\begin{cases}
- \mathrm{div}\, ( K(\varphi) \nabla u) + u =f,\quad &\text{ in }\Omega,\\
\partial_\n u=0, \quad &\text{ on }\partial \Omega.
\end{cases}
\end{equation}
If $\varphi \in W^{1,r}(\Omega)$, with $d<r \leq \infty$, and $f\in L^2(\Omega)$. Then, $u\in H^2(\Omega)$ and $\partial_\n u=0$ on $\partial \Omega$. Moreover, there exists a positive increasing function $Q$ depending on $\underline{K}$, $\overline{K}$ and $r$ such that,
$$
\| u\|_{H^2(\Omega)}\leq Q(R) \| f\|_{L^2(\Omega)},
$$
where $\| \varphi\|_{W^{1,r}(\Omega)}\leq R$.

\end{proposition}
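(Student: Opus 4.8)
The plan is to argue by approximation: mollify the coefficient down to a smooth (hence Lipschitz) one, for which $H^2$-regularity is classical, and then establish an a priori $H^2$-bound that is uniform in the mollification parameter and depends on $\varphi$ only through $\|\varphi\|_{W^{1,r}(\Omega)}$. First I would settle weak solvability: the bilinear form $B(u,v)=\int_\Omega K(\varphi)\nabla u\cdot\nabla v\,\d x+\int_\Omega uv\,\d x$ is bounded on $H^1(\Omega)$ since $0<\underline K\le K(\varphi)\le\overline K$, and coercive, $B(u,u)\ge\min\{\underline K,1\}\|u\|_{H^1(\Omega)}^2$, so Lax--Milgram gives a unique weak solution $u\in H^1(\Omega)$ with $\|u\|_{H^1(\Omega)}\le C(\underline K)\|f\|_{L^2(\Omega)}$, the weak formulation encoding $K(\varphi)\partial_\n u=0$. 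Since $d<r\le\infty$, Morrey's embedding gives $\varphi\in C^{0,1-d/r}(\overline\Omega)$ (trivial if $r=\infty$) with $\|\varphi\|_{L^\infty(\Omega)}\le CR$, and as $K\in C^1(\mathbb R)$ the functions $K,K'$ are bounded on the compact range of $\varphi$ by a constant depending on $R$; hence $K(\varphi)\in W^{1,r}(\Omega)$ with $\nabla K(\varphi)=K'(\varphi)\nabla\varphi$ and $\|\nabla K(\varphi)\|_{L^r(\Omega)}\le Q_0(R)$ for some increasing $Q_0$.

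Next I would extend $\varphi$ to $\mathbb R^d$ by a bounded operator (with dimensional norm) and mollify, getting $\varphi_\varepsilon\in C^\infty(\overline\Omega)$ with $\varphi_\varepsilon\to\varphi$ in $W^{1,r}(\Omega)$ and uniformly on $\overline\Omega$, and $\|\varphi_\varepsilon\|_{W^{1,r}(\Omega)}\le CR$ uniformly in $\varepsilon$. Let $u_\varepsilon$ solve \eqref{NP-2} with $\varphi_\varepsilon$ in place of $\varphi$; since $K(\varphi_\varepsilon)$ is Lipschitz, the classical difference-quotient argument (interior plus tangential difference quotients after flattening $\partial\Omega$, with the normal second derivative recovered from the equation) yields $u_\varepsilon\in H^2(\Omega)$ with $\partial_\n u_\varepsilon=0$. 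For the uniform bound, rewrite the equation as $-\Delta u_\varepsilon=K(\varphi_\varepsilon)^{-1}\big(f-u_\varepsilon+\nabla K(\varphi_\varepsilon)\cdot\nabla u_\varepsilon\big)$ with $\partial_\n u_\varepsilon=0$, so the standard Neumann--Laplacian $H^2$-estimate on the smooth domain $\Omega$ gives $\|u_\varepsilon\|_{H^2(\Omega)}\le C\underline K^{-1}\big(\|f\|_{L^2(\Omega)}+\|\nabla K(\varphi_\varepsilon)\cdot\nabla u_\varepsilon\|_{L^2(\Omega)}\big)+C\|u_\varepsilon\|_{H^1(\Omega)}$. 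By Hölder, $\|\nabla K(\varphi_\varepsilon)\cdot\nabla u_\varepsilon\|_{L^2(\Omega)}\le\|\nabla K(\varphi_\varepsilon)\|_{L^r(\Omega)}\|\nabla u_\varepsilon\|_{L^{2r/(r-2)}(\Omega)}$, and here $r>d$ (with $d=2,3$) is exactly what makes the Gagliardo--Nirenberg interpolation $\|\nabla u_\varepsilon\|_{L^{2r/(r-2)}(\Omega)}\le C\|u_\varepsilon\|_{H^1(\Omega)}^{1-\alpha}\|u_\varepsilon\|_{H^2(\Omega)}^{\alpha}$ available for some $\alpha\in(0,1)$; inserting the $\varepsilon$-uniform bounds $\|u_\varepsilon\|_{H^1(\Omega)}\le C\|f\|_{L^2(\Omega)}$ and $\|\nabla K(\varphi_\varepsilon)\|_{L^r(\Omega)}\le Q_0(CR)$, the offending term is at most $CQ_0(CR)\|f\|_{L^2(\Omega)}^{1-\alpha}\|u_\varepsilon\|_{H^2(\Omega)}^{\alpha}$, which Young's inequality absorbs into the left-hand side, leaving $\|u_\varepsilon\|_{H^2(\Omega)}\le Q(R)\|f\|_{L^2(\Omega)}$ with $Q$ increasing and independent of $\varepsilon$.

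Finally, along a subsequence $u_\varepsilon\rightharpoonup u$ in $H^2(\Omega)$; since $K(\varphi_\varepsilon)\to K(\varphi)$ uniformly on $\overline\Omega$ one passes to the limit in the weak formulation and identifies $u$ with the unique weak solution from the first step, while $\partial_\n u_\varepsilon=0$ together with weak $H^2$-convergence gives $\partial_\n u=0$ on $\partial\Omega$; lower semicontinuity then yields $\|u\|_{H^2(\Omega)}\le Q(R)\|f\|_{L^2(\Omega)}$. The main obstacle is precisely the low regularity of the coefficient: $K(\varphi)$ is merely Hölder continuous (with a small exponent when $r$ is close to $d$), so the term $\nabla K(\varphi)\cdot\nabla u$ produced by differentiating the equation is not a priori in $L^2(\Omega)$ and cannot be handled by freezing coefficients alone; the hypothesis $r>d$ enters exactly so that the above interpolation closes the estimate. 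A secondary, purely technical point is to arrange the mollification and the extension so that every constant depends on $\varphi$ only through $\|\varphi\|_{W^{1,r}(\Omega)}$.
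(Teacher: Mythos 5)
Your proof is correct. Note that the paper does not prove this proposition at all: it is stated as ``a classical result in the theory of elliptic systems'' with a pointer to \cite[Lemma 4]{ABELS2009}, so there is no in-paper argument to compare against. What you supply is a complete, self-contained proof along the standard lines one would expect behind that citation: Lax--Milgram for existence and the $H^1$ bound, mollification of the coefficient to reduce to the Lipschitz case where difference quotients give $H^2$ regularity, the rewriting $-\Delta u_\varepsilon=K(\varphi_\varepsilon)^{-1}\bigl(f-u_\varepsilon+\nabla K(\varphi_\varepsilon)\cdot\nabla u_\varepsilon\bigr)$ combined with the Neumann--Laplacian estimate, and the H\"older/Gagliardo--Nirenberg/Young absorption of the commutator term --- where you correctly isolate that $r>d$ is exactly what makes the interpolation exponent $\alpha$ strictly less than one (for $d=3$, $2r/(r-2)<6$ iff $r>3$; for $d=2$ any finite exponent works once $r>2$). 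The limit passage via uniform convergence of $K(\varphi_\varepsilon)$ and weak $H^2$ compactness, including the identification of the limit with the Lax--Milgram solution and the persistence of $\partial_\n u=0$, is also sound. Two cosmetic remarks: your argument necessarily makes $Q$ depend also on $\sup\{|K'(s)|:|s|\le CR\}$, a dependence the proposition's wording suppresses (harmless in the paper's applications, where $K$ is a fixed $C^1$ function evaluated on $[-1,1]$); and for $r=\infty$ the convergence $\varphi_\varepsilon\to\varphi$ in $W^{1,\infty}$ need not hold, but the uniform bound $\|\nabla\varphi_\varepsilon\|_{L^\infty}\le C\|\nabla\varphi\|_{L^\infty}$ together with uniform convergence of $\varphi_\varepsilon$ itself is all your argument actually uses, so nothing breaks.
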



\section{The stationary system}
\label{s-stationary}

Let $d=2,3$ and let $m\in (-1,1)$ be given. In this section we provide some regularity properties for the elements of $\mathcal{S}_m$.
To this aim, following \cite{ADG2013}, it is convenient to introduce the integral function $A:\R\to\R$
\begin{equation}
\label{AAA}
A(s):= \int_0^s \sqrt{a(\tau)}\, \d \tau.
\end{equation}
Since $\nabla A(\u)= A'(\u)\nabla \u= \sqrt{a(\u)}\nabla \u$, we notice that, for $\u\in \mathcal{V}_m \cap H^2(\Omega)$,
\begin{equation}
\label{div-Aphi}
-\sqrt{a(\u)} \Delta A(\u)=- \div (a(\u)\nabla \u)+a'(\u) \frac{|\nabla \u|^2}{2},
\end{equation}
and
\begin{equation}
\label{phi-Aphi-rel}
\Delta u= \frac{1}{\sqrt{a(u)}} \Delta A(u) - \frac{a'(u)}{2 a^2(u)} | \nabla A(u)|^2.
\end{equation}
In addition, we have
$$
\partial_\n A(\u)= \sqrt{a(\u)} \partial_\n \u=0 \quad \text{on } \partial \Omega \times (0,\infty).
$$
Accordingly, if $\u\in \mathcal{S}_m$, then $\u$ solves the following quasilinear elliptic problem with logarithmic nonlinear term
\begin{equation}
\label{NEUMANNSING}
\begin{cases}
-\sqrt{a(\u)}\Delta A(\u)+F'(\u)=f,\quad &\text{ in }\Omega,\\
\partial_\n A(\u)=0, \quad &\text{ on }\partial \Omega,
\end{cases}
\end{equation}
where $f$ is given by
$$f=\overline{\frac{a'(\u)}{2} |\nabla \u|^2+\Psi'(\u)}+\theta_0 \u.$$

\medskip

In the following, let $f\in L^2(\Omega)$ be a generic function and assume that $\u$ is any solution to \eqref{NEUMANNSING} in the following sense: $\u\in H^2(\Omega)$ with $F'(\u)\in L^2(\Omega)$, which satisfies \eqref{NEUMANNSING} almost everywhere. In particular, we observe that $\| \u\|_{L^\infty(\Omega)}\leq 1$. Then, the following result holds true.

\begin{lemma}
\label{ell2}
Let $d=2,3$ and  $f\in L^p(\Omega)$ with $2\leq p \leq \infty$.
Then, we have
$$
\| F'(\u)\|_{L^p(\Omega)}\leq C\| f\|_{L^p(\Omega)},
$$
for some positive constant $C$ independent of $p$.
\end{lemma}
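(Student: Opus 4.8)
The plan is to test equation \eqref{NEUMANNSING} against a suitable power of $F'(\u)$ and exploit the monotonicity of $F'$ together with the structure of the diffusion term. The starting point is the observation that $F'$ is monotone increasing (since $F$ is convex, being the Flory–Huggins part without the $-\tfrac{\theta_0}{2}s^2$ correction), and that $F''\ge 0$, so $F''(\u)\,a(\u)\ge 0$. For $p<\infty$, I would multiply the PDE $-\sqrt{a(\u)}\Delta A(\u)+F'(\u)=f$ by $|F'(\u)|^{p-2}F'(\u)$, which is an admissible test function because $F'(\u)\in L^p(\Omega)$ is not yet known — so the rigorous argument proceeds by first truncating $F'$ (replace $F'$ by its Lipschitz truncation $F'_\delta$, carry out the estimate, and pass to the limit $\delta\to 0$ using Fatou/monotone convergence), but at the level of this proposal I will describe the formal computation. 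Integrating by parts in the first term and using $\partial_\n A(\u)=0$ gives
\begin{align}
\label{test-id}
(p-1)\int_\Omega a(\u)\,F''(\u)\,|F'(\u)|^{p-2}|\nabla\u|^2\,\d x + \int_\Omega |F'(\u)|^p\,\d x = \int_\Omega f\,|F'(\u)|^{p-2}F'(\u)\,\d x,
\end{align}
where I have used $\nabla A(\u)=\sqrt{a(\u)}\nabla\u$ so that $\sqrt{a(\u)}\nabla A(\u)=a(\u)\nabla\u$, and $\nabla\big(|F'(\u)|^{p-2}F'(\u)\big)=(p-1)|F'(\u)|^{p-2}F''(\u)\nabla\u$. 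The first term on the left is nonnegative, hence it can be dropped, and the right-hand side is bounded by $\|f\|_{L^p(\Omega)}\,\|F'(\u)\|_{L^p(\Omega)}^{p-1}$ via Hölder's inequality with exponents $p$ and $p/(p-1)$. Dividing by $\|F'(\u)\|_{L^p(\Omega)}^{p-1}$ (which is legitimate once one knows the quantity is finite, again justified by the truncation) yields $\|F'(\u)\|_{L^p(\Omega)}\le\|f\|_{L^p(\Omega)}$, so in fact $C=1$ works for all finite $p$.

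For the endpoint $p=\infty$ I would argue by passing to the limit: the bound $\|F'(\u)\|_{L^p(\Omega)}\le\|f\|_{L^p(\Omega)}\le\|f\|_{L^\infty(\Omega)}|\Omega|^{1/p}$ holds for every finite $p$, and letting $p\to\infty$ gives $\|F'(\u)\|_{L^\infty(\Omega)}\le\|f\|_{L^\infty(\Omega)}$. Alternatively one can use a direct $L^\infty$ comparison/Stampacchia-type argument on the super-level sets $\{F'(\u)>k\}$, but the limiting procedure is cleaner and keeps the constant independent of $p$ as claimed.

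The main subtlety — and the place where care is genuinely needed — is the justification of \eqref{test-id}: a priori $F'(\u)$ is only in $L^2(\Omega)$, so $|F'(\u)|^{p-2}F'(\u)$ need not be in $H^1(\Omega)$ and the integration by parts is not immediate. The remedy is the standard truncation scheme: set $F'_{\delta}(s)$ to be $F'$ cut off at height $\pm 1/\delta$ (Lipschitz, hence $F'_\delta(\u)\in H^1(\Omega)$ with $\nabla F'_\delta(\u)=F''_\delta(\u)\nabla\u$ and $F''_\delta\ge 0$), test against $|F'_\delta(\u)|^{p-2}F'_\delta(\u)\in H^1(\Omega)\cap L^\infty(\Omega)$, obtain $\|F'_\delta(\u)\|_{L^p}\le\|f\|_{L^p}+\text{(error terms that vanish)}$, and then send $\delta\to 0$ using $F'_\delta(\u)\to F'(\u)$ pointwise together with monotone convergence. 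Everything else in the argument is a routine Hölder estimate, and the sign of the diffusion contribution does the real work of absorbing the troublesome gradient term — notably, no convexity of the gradient part $J$ of the energy is used here, only $a\ge a_m>0$ and $F''\ge 0$.
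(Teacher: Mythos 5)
Your overall strategy (test against $|F'(\u)|^{p-2}F'(\u)$, use the sign of the diffusion term, Hölder, truncate, let $p\to\infty$) is the right one and matches the paper's, but your key identity \eqref{test-id} is wrong, and the error is not cosmetic. The first term of the equation is $-\sqrt{a(\u)}\,\Delta A(\u)$, which is \emph{not} in divergence form: by \eqref{div-Aphi} one has $-\sqrt{a(\u)}\Delta A(\u)=-\div(a(\u)\nabla\u)+\tfrac{a'(\u)}{2}|\nabla\u|^2$. Consequently, when you multiply by $v=|F'(\u)|^{p-2}F'(\u)$ and integrate by parts you obtain
\begin{equation*}
-\int_\Omega \sqrt{a(\u)}\,\Delta A(\u)\, v\,\d x
=(p-1)\int_\Omega a(\u)F''(\u)|F'(\u)|^{p-2}|\nabla\u|^2\,\d x
+\int_\Omega \frac{a'(\u)}{2}|\nabla\u|^2\, v\,\d x,
\end{equation*}
and the second integral, which you have dropped, has no sign: $a'$ can change sign (no convexity-type assumption is made on $a$), and $v$ has the sign of $F'(\u)$. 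There is no obvious way to absorb it, so the inequality $\|F'(\u)\|_{L^p}\le\|f\|_{L^p}$ with $C=1$ does not follow from your computation.

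The paper's fix is to divide the equation by $\sqrt{a(\u)}$ \emph{before} testing, i.e.\ to work with $-\Delta A(\u)+F'(\u)/\sqrt{a(\u)}=f/\sqrt{a(\u)}$. Then the principal part is the plain Laplacian of $A(\u)$, the integration by parts produces only the nonnegative term $(p-1)\int |F'|^{p-2}F''\,\nabla\u\cdot\nabla A(\u)\,\d x$, and the price is that $F'(\u)$ and $f$ carry the weight $1/\sqrt{a(\u)}$, which is handled with $a_m\le a\le a_M$ and yields the constant $C=\sqrt{a_M/a_m}$ (so the sharp constant $1$ you claim is not what one gets). Your truncation device (cutting off $F'$ at height $\pm1/\delta$) differs from the paper's (which truncates $\u$ itself at $\pm(1-\tfrac1k)$ and then uses the monotonicity of $F'$ together with $F'(s)s\ge0$ to compare $\int F'(\u)|F'(\u_k)|^{p-2}F'(\u_k)$ with $\|F'(\u_k)\|_{L^p}^p$); either regularization is workable, but the missing quadratic-gradient term must be addressed first.
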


\begin{proof}
For $k\in \mathbb{N}$, let us define the globally Lipschitz function $h_k: \mathbb{R}\rightarrow \mathbb{R}$
\begin{equation}
\label{trunc}
h_k(s)=
\begin{cases}
-1+\frac1k,&s<-1+\frac1k, \\
s,& s\in [-1+\frac1k,1-\frac1k],\\
1-\frac1k,&s>1-\frac1k,
\end{cases} \quad \text{and }\quad \u_k=h_k \circ \u.
\end{equation}
Since $\u\in H^1(\Omega)$, the classical result on compositions in Sobolev spaces yields $\u_k \in H^1(\Omega)$ and
$\nabla \u_k=\nabla \u \cdot\chi_{[-1+\frac1k,1-\frac1k]} (\u)$, for any $k>0$.
Since $a(\cdot)>0$, we see from \eqref{NEUMANNSING} that $u$ satisfies
$$
-\Delta A(\u)+\frac{F'(\u)}{\sqrt{a(\u)}}=\frac{f}{\sqrt{a(\u)}} \quad \text{a.e in } \Omega,
$$
where $f \in L^p(\Omega)$ by assumption.
Let $2\leq p < \infty$. Choosing as test function $|F'(\u_k)|^{p-2}F'(\u_k)$, which belongs to $H^1(\Omega)$ for any $k$, we have
\begin{align*}
\int_\Omega -\Delta A(\u)  |F'(\u_k)|^{p-2}F'(\u_k) \, \mathrm{d}x
&+\int_\Omega
\frac{F'(\u)}{\sqrt{a(\u)}} |F'(\u_k)|^{p-2}F'(\u_k) \, \mathrm{d}x
\\
&=
\int_\Omega \frac{f}{\sqrt{a(\u)}}  |F'(\u_k)|^{p-2}F'(\u_k) \, \mathrm{d}x.
\end{align*}
Since $F''(\u_k)$ is well-defined and positive, and $\partial_\n A(\u)=0$ on $\partial \Omega$, we learn that
\begin{align*}
&\int_\Omega -\Delta A(\u)  |F'(\u_k)|^{p-2}F'(\u_k) \, \mathrm{d}x
\\
&\quad =(p-1) \int_\Omega |F'(\u_k)|^{p-2}F''(\u_k)\nabla \u\cdot
\chi_{[-1+\frac1k,1-\frac1k]}(\u) \cdot \nabla A(\u) \, \mathrm{d}x \geq 0.
\end{align*}
Then, recalling that $a_m \le a(s) \le a_M$ for any $s \in [-1,1]$, we obtain
$$
\frac{1}{\sqrt{a_M}} \int_\Omega F'(\u)  |F'(\u_k)|^{p-2}F'(\u_k) \, \mathrm{d}x
\leq \frac{1}{\sqrt{a_m}} \int_\Omega |f|  |F'(\u_k)|^{p-1} \, \mathrm{d}x.
$$
By the H\"{o}lder inequality
$$
\int_\Omega |f|  |F'(\u_k)|^{p-1} \, \mathrm{d}x
\leq \|F'(\u_k)\|_{L^{p}(\Omega)}^{p-1} \|f\|_{L^{p}(\Omega)}.
$$
On the other side, by exploiting the monotonicity of $F'$ and the fact that $F'(s)s \geq 0$, we have
$$
\|F'(\u_k)\|^p_{L^{p}(\Omega)}\leq \int_\Omega F'(\u)  |F'(\u_k)|^{p-2}F'(\u_k) \, \mathrm{d}x.
$$
Therefore, we arrive at
$$
\| F'(\u_k)\|_{L^p(\Omega)}\leq \frac{\sqrt{a_M}}{\sqrt{a_m}}\| f\|_{L^p(\Omega)}.
$$
Passing to the limit as $k\to \infty$, by Fatou's lemma we end up with
$$
\| F'(\u)\|_{L^p(\Omega)}\leq \frac{\sqrt{a_M}}{\sqrt{a_m}} \| f\|_{L^p(\Omega)}.
$$
Finally, in the case when $f\in L^{\infty}(\Omega)$, we infer from the above estimate that
$
\| F'(\u)\|_{L^p(\Omega)}\leq C,
$
where $C$ is independent of $p$. Thus, passing to the limit as $p \to \infty$, the claim follows.
\end{proof}

We are now in the position to show the validity of the following crucial result.
\begin{proposition}\label{esci}
For each $\u\in \mathcal{S}_m$, there exists a constant $\delta_u \in (0,1)$ such that
 \begin{align}
 |\u(x)|\leq 1-\delta_u, \quad \forall\, x\in \overline{\Omega}.\label{sepsta}
 \end{align}
 Moreover, $\u\in H^3(\Omega)$.
 \end{proposition}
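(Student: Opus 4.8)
The plan is to establish the separation property \eqref{sepsta} first, and then bootstrap to $H^3$ regularity. For the separation property, I would work with the rewritten form \eqref{NEUMANNSING} and the auxiliary variable $A(\u)$. The key point is that $A$ is a bi-Lipschitz change of variables (since $\sqrt{a_m}\le A'\le\sqrt{a_M}$), so $A(\u)\in H^2(\Omega)$ and the separation of $\u$ from $\pm 1$ is equivalent to the separation of $A(\u)$ from $A(\pm1)$; moreover $|\u(x)|\le 1-\delta_u$ for all $x$ will follow once we know $\u\in C(\overline\Omega)$ and $\|F'(\u)\|_{L^\infty}<\infty$, because the logarithmic blow-up of $F'$ forces the values of $\u$ to stay in a compact subset of $(-1,1)$. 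So the strategy is: (i) use Lemma \ref{ell2} with $p=\infty$ to get $F'(\u)\in L^\infty(\Omega)$ — here one checks that the right-hand side $f$ of \eqref{NEUMANNSING}, namely $\overline{\tfrac{a'(\u)}{2}|\nabla\u|^2+\Psi'(\u)}+\theta_0\u$, lies in $L^\infty(\Omega)$, which holds since the average is a constant and $\u\in L^\infty$; (ii) deduce $\|\u\|_{L^\infty}\le 1-\delta_u$ for some $\delta_u\in(0,1)$ from the fact that $|F'(s)|\to\infty$ as $|s|\to 1$, together with continuity of $\u$.

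For the continuity of $\u$ (needed to make "$\forall x\in\overline\Omega$" meaningful rather than "a.e."), I would argue as follows. Once $F'(\u)\in L^\infty(\Omega)$, the full nonlinear term $\tfrac{a'(\u)}{2}|\nabla\u|^2+\Psi'(\u)-f$ — wait, more directly: rewrite \eqref{NEUMANNSING} as $-\Delta A(\u) = g$ with $g := \big(f - F'(\u)\big)/\sqrt{a(\u)}\in L^2(\Omega)$, complemented by $\partial_\n A(\u)=0$. Elliptic regularity for the Neumann Laplacian gives $A(\u)\in H^2(\Omega)$ (already known) and we can now iterate: since $a\in C^2$ and $\u\in H^2(\Omega)\hookrightarrow$ (for $d\le 3$) $W^{1,q}$ for some $q>d$... actually the cleanest route in $d=2,3$: from $A(\u)\in H^2(\Omega)$ we get $A(\u)\in C(\overline\Omega)$ for $d=2,3$ by Sobolev embedding, hence $\u = A^{-1}(A(\u))\in C(\overline\Omega)$ since $A^{-1}$ is continuous (in fact Lipschitz). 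This makes the pointwise separation statement rigorous and also justifies $\delta_u>0$ via compactness: $\u(\overline\Omega)$ is a compact subset of $(-1,1)$.

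For the $H^3$ regularity, I would bootstrap once more. Having $F'(\u)\in L^\infty(\Omega)$ and $\u$ strictly separated, the nonlinearity $F'(\u)$ is now a smooth (indeed $C^2$, since $F$ is smooth away from $\pm1$) function of $\u$, so $F'(\u)\in H^1(\Omega)$: $\nabla F'(\u) = F''(\u)\nabla\u\in L^2(\Omega)$ because $F''(\u)$ is bounded on the separated range. Similarly $\tfrac{a'(\u)}{2}|\nabla\u|^2\in H^1$? — not needed if we stay with the $A(\u)$ formulation. From $-\Delta A(\u)=g$ with $g=(f-F'(\u))/\sqrt{a(\u)}$: we have $f\in L^2$ only, so this gives at best $A(\u)\in H^2$ again. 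To reach $H^3$ one needs $f\in H^1$; here $f = \overline{(\cdots)}+\theta_0\u$, and $\overline{(\cdots)}$ is a constant, so $\nabla f = \theta_0\nabla\u\in L^2$, i.e. $f\in H^1(\Omega)$. Therefore $g\in H^1(\Omega)$ (using $f\in H^1$, $F'(\u)\in H^1$, $a(\u)$ separated and $C^2$ so $1/\sqrt{a(\u)}\in H^1\cap L^\infty$, and the algebra property $H^1\cap L^\infty$ in $d\le3$ — one should be slightly careful since $H^1$ is not an algebra in $d=3$, but $H^1\cap L^\infty$ is, and all factors here are bounded). Elliptic regularity for $-\Delta A(\u)=g\in H^1(\Omega)$, $\partial_\n A(\u)=0$, on the smooth domain $\Omega$ yields $A(\u)\in H^3(\Omega)$, and then the inversion formula \eqref{phi-Aphi-rel}, $\Delta\u=\tfrac{1}{\sqrt{a(\u)}}\Delta A(\u)-\tfrac{a'(\u)}{2a^2(\u)}|\nabla A(\u)|^2$, shows $\Delta\u\in H^1(\Omega)$ (the first term is $H^1$ as product of a separated $C^2$-function of $\u$ with an $H^1$ function; the second is a product of bounded $C^2$-functions of $\u$ with $|\nabla A(\u)|^2$, and $\nabla A(\u)\in H^2\hookrightarrow W^{1,q}$ for $q>d$ hence $|\nabla A(\u)|^2\in H^1$), so $\u\in H^3(\Omega)$ by elliptic regularity for the Neumann Laplacian applied to $\u$ itself (with $\partial_\n\u=0$).

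The main obstacle I anticipate is not the bootstrapping per se but keeping careful track of which products genuinely land in $H^1$ in dimension $d=3$, where $H^1$ fails to be an algebra; the resolution is to exploit the $L^\infty$ bounds coming from separation (every "coefficient" $a(\u),a'(\u),1/\sqrt{a(\u)},F''(\u)$ is bounded once $\u$ is separated) and the extra integrability $H^2\hookrightarrow W^{1,q}$, $q>d$, available for $\u$ and $A(\u)$. A secondary subtlety is making the $L^\infty$ step of Lemma \ref{ell2} applicable: one must verify $f\in L^\infty(\Omega)$, i.e. that $\overline{\tfrac{a'(\u)}{2}|\nabla\u|^2+\Psi'(\u)}$ is a finite constant, which is immediate since $\u\in H^2$ gives $|\nabla\u|^2\in L^1$ and $\Psi'(\u)\in L^1$ (indeed $\Psi'(\u)=F'(\u)-\theta_0\u\in L^2$ by hypothesis), and $\theta_0\u\in L^\infty$.
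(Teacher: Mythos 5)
Your proposal is correct and follows essentially the same route as the paper: apply Lemma \ref{ell2} with $p=\infty$ (after checking $f\in L^\infty(\Omega)$) to get $F'(\u)\in L^\infty(\Omega)$, deduce the separation from the blow-up of $F'$ at $\pm1$ and the continuity of $\u$, then bootstrap through the $A(\u)$ formulation to obtain $A(\u)\in H^3(\Omega)$ and transfer this to $\u$ via \eqref{phi-Aphi-rel}. The only cosmetic differences are that the paper adds a zeroth-order term to work with the coercive problems \eqref{Aphi-ELL-0} and \eqref{Aphi-ELL-0-1}, and that continuity of $\u$ is immediate from $\u\in H^2(\Omega)\hookrightarrow C(\overline{\Omega})$ without passing through $A^{-1}$.
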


\begin{proof}
We notice that $\u$ solves the elliptic problem \eqref{NEUMANNSING} with $f$ given by
$$f=\overline{\frac{a'(\u)}{2} |\nabla \u|^2+\Psi'(\u)}+\theta_0 \u\in L^\infty(\Omega).$$
Therefore,  we learn by Lemma \ref{ell2} that $F'(\u)\in L^\infty(\Omega)$. Since $F'$ diverges at $\pm 1$ and
$\u\in C(\overline{\Omega})$, we immediately deduce the existence of $\delta_u>0$ such that
$|\u(x)|\leq 1-\delta_u$, for all $x\in \overline{\Omega}$.

\smallskip
In order to establish further regularity, we rewrite \eqref{NEUMANNSING} in the following form
\begin{equation}
\label{Aphi-ELL-0}
\begin{cases}
-\Delta A(\u)+ A(\u) = g \quad &\text{in }\Omega,\\
\partial_{\n} A(\u)=0\quad &\text{on }\partial\Omega,
\end{cases}
\end{equation}
where
$$
g=\frac{1}{\sqrt{a(\u)}}(f-F'(\u))+A(\u)=\frac{1}{\sqrt{a(\u)}}\left(-\Psi'(\u)+\overline{\frac{a'(\u)}{2} |\nabla \u|^2+\Psi'(\u)}\right)+A(\u).
$$
Since $\u$ is separated from $\pm 1$ by \eqref{sepsta}, we learn that $\Psi'(\u)$ and $\Psi''(\u)$ are in $L^\infty(\Omega)$. Therefore, since $\u\in H^2(\Omega)$, we easily see that
$g\in L^2(\Omega)$. Besides, since
$$
\partial_{x_i}g = \frac12
\frac{a'(\u)}{a^\frac32(\u)} \left( \Psi'(\u)-\overline{\frac{a'(\u)}{2} |\nabla \u|^2+\Psi'(\u)}\right) \partial_{x_i} \u
-
\frac{1}{\sqrt{a(\u)}} \Psi''(\u) \partial_{x_i} \u + \sqrt{a(\u)}\partial_{x_i} \u,
$$
we have
\begin{align*}
\|\partial_{x_i}g\|_{L^2(\Omega)} &\leq \frac12
\left\|\frac{a'(\u)}{a^\frac32(\u)}\right\|_{L^\infty(\Omega)} 
\left(  2\|\Psi'(\u)\|_{L^\infty(\Omega)} + \frac{1}{2 |\Omega|} 
\| a'(u)\|_{L^\infty(\Omega)} \| \nabla u\|_{L^2(\Omega)}^2 \right)
\| \partial_{x_i} \u\|_{L^2(\Omega)}\\
&\quad +
\left\|\frac{1}{\sqrt{a(\u)}}\right\|_{L^\infty(\Omega)}\| \Psi''(\u)\|_{L^\infty(\Omega)}\| \partial_{x_i} \u\|_{L^2(\Omega)} + \|\sqrt{a(\u)}\|_{L^\infty(\Omega)}\|\partial_{x_i} \u\|_{L^2(\Omega)},
\end{align*}
which entails that $g\in H^1(\Omega)$. Hence, the classical elliptic regularity theory yields that $A(\u)\in H^3(\Omega)$. Next,
on account of \eqref{phi-Aphi-rel}, $u$ solves
\begin{equation}
\label{Aphi-ELL-0-1}
\begin{cases}
-\Delta \u+ \u = h \quad &\text{in }\Omega,\\
\partial_{\n} \u=0\quad &\text{on }\partial\Omega,
\end{cases}
\end{equation}
where
$$
h=- \frac{1}{\sqrt{a(u)}} \Delta A(u) + \frac{a'(u)}{2 a^2(u)} | \nabla A(u)|^2 +u.
$$
We notice that
\begin{align}
\partial_{x_i} h
&= \frac12 \frac{a'(u)}{a^\frac32 (u)} \Delta A(u) \partial_{x_i} u
- \frac{1}{\sqrt{a(u)}} \partial_{x_i} \Delta A(u)
+\frac12 \left( \frac{a''(u)a(u)- 2 (a'(u))^2}{a^3(u)} \right)  \partial_{x_i} u   | \nabla A(u)|^2
\notag
\\
&\quad + \frac{a'(u)}{a^2(u)} \nabla A(u) \cdot \partial_{x_i} \nabla A(u)
+ \partial_{x_i} u.
\label{h1}
\end{align}
Therefore, 
\begin{align}
\|\partial_{x_i} h\|_{L^2(\Omega)}
&\leq \left\|\frac{a'(u)}{2a^\frac32 (u)}\right\|_{L^\infty(\Omega)}\| \Delta A(u)\|_{L^4(\Omega)}\| \partial_{x_i} u\|_{L^4(\Omega)}
+ \left\|\frac{1}{\sqrt{a(u)}}\right\|_{L^\infty(\Omega)}\| \partial_{x_i}\Delta A(u)\|_{L^2(\Omega)}
\notag
\\
&\quad+\left\|\frac{a''(u)a(u)- 2 (a'(u))^2}{2a^3(u)} \right\|_{L^\infty(\Omega)}\|  \partial_{x_i} u\|_{L^6(\Omega)}\| \nabla A(u)\|_{L^6(\Omega)}^2
\notag
\\
&\quad  +\left\|\frac{a'(u)}{a^2(u)}\right\|_{L^\infty(\Omega)}\| \nabla A(u)\|_{L^4(\Omega)} \|\partial_{x_i} \nabla A(u)\|_{L^4(\Omega)}
+ \|\partial_{x_i} u\|_{L^2(\Omega)}
\notag
\\
& \leq
\frac{C}{2a_m^\frac32}\|A(u)\|_{H^3(\Omega)}\| u\|_{H^2(\Omega)}
+ \frac{1}{\sqrt{a_m}}\|A(u)\|_{H^3(\Omega)}
\notag
\\
&\quad+\frac{C}{2a_m^3}\|u\|_{H^2(\Omega)}\|A(u)\|_{H^2(\Omega)}^2
  +\frac{C}{a_m^2}\|A(u)\|_{H^2(\Omega)}\|A(u)\|_{H^3(\Omega)}
+ \|u\|_{H^1(\Omega)}.
\label{h2}
\end{align}
Since $u \in H^2(\Omega)$ and $A(u) \in H^3(\Omega)$, we obtain that $h \in H^1(\Omega)$, and thus $\u\in H^3(\Omega)$ as claimed.
\end{proof}

\section{A gradient Lojasiewicz-Simon inequality with nonlinear diffusion}
\label{LSineq}
In this section we demonstrate a gradient inequality of Lojasiewicz-Simon
type for the energy \eqref{free-energy}, namely we prove Theorem \ref{LSg-intro}. To this aim, we embed the problem in a more general setting as follows.
\subsection{A general result}
We set $H=L^2(\Omega)$ and we introduce the Hilbert space
$$
V=\{u\in H^2(\Omega):\partial_\n u=0\text{ on }\partial\Omega\}\subset H,
$$
with continuous and dense embedding into $H$.
We consider the class of functionals $\E:V\to \R$
$$
\E(u)=\int_\Omega \frac{\widetilde{a}(u)}{2}|\nabla u|^2+\widetilde{\Psi}(u) \, \d x,
$$
where $\widetilde{a}$ and $\widetilde{\Psi}$ are two arbitrary regular functions such that
\begin{equation}
\label{APSI-ass}
\widetilde{a},\widetilde{\Psi}\in C^3(\R) \text{ are analytic on }
\left(-1+\frac{\widetilde{\delta}}{2},1-\frac{\widetilde{\delta}}{2} \right) \!, \text{ for some $\widetilde{\delta}>0$.}
\end{equation}
We preliminarily observe that, for any $u \in V$,
$$
\langle \E'(u),v \rangle_{V' \times V}=
\int_{\Omega} \widetilde{a}(u)\nabla u\cdot \nabla v+\frac{\widetilde{a}'(u)}{2} |\nabla u|^2v +\widetilde{\Psi}'(u) v\, \d x, \quad \forall \, v \in V.
$$
Since $u \in V$, it is easily seen that
$$
\langle \E'(u),v \rangle_{V' \times V}=
\int_{\Omega} \left(-\div(\widetilde{a}(u)\nabla u)+\frac{\widetilde{a}'(u)}{2} |\nabla u|^2+\widetilde{\Psi}'(u)\right)v \, \d x, \quad \forall \, v \in V.
$$
Hence, for any $u\in V$, $\E$ has an $H$-gradient (see, for instance, \cite[Definition 5.1]{RUPP}) defined by
\begin{equation}
\label{GRAD-E}
\nabla \E(u):=-\div(\widetilde{a}(u)\nabla u)+\frac{\widetilde{a}'(u)}{2} |\nabla u|^2+\widetilde{\Psi}'(u) \in H.
\end{equation}
Finally, we introduce the set
$$
U=\left\{u\in V:-1+\widetilde{\delta}<u(x)<1-\widetilde{\delta},\quad \forall x\in \overline{\Omega}\right\}
$$
which is an open subset of $V$, due to the continuous embedding $V\hookrightarrow C(\overline{\Omega})$.

\begin{theorem}
\label{LS}
Let $d=2,3$. Assume that \eqref{APSI-ass} hold.
Let $u^\star \in U \cap H^3(\Omega)$ with $\overline{u^\star}=m$ with $m \in (-1,1)$ such that
\begin{equation}
\label{LS-cp}
( \nabla \E (u^\star), v ) =0, \quad \forall \, v \in H \text{ with } \overline{v}=0.
\end{equation}
Then, there exist $\theta \in \left(0,\frac12\right]$, $C>0$, $\beta>0$ such that
\begin{equation}
\label{LS-formula}
| \E(u) - \E(u^\star)|^{1-\theta}
\leq C \norm{\nabla \E(u) -\overline{\nabla \E(u)}}_{H}
\end{equation}
for all $u\in U$ such that $\overline{u}=m$ and
$\| u-u^\star\|_{V}\leq \beta$.
\end{theorem}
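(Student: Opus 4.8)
The plan is to realize \eqref{LS-formula} as a \emph{constrained} Łojasiewicz–Simon inequality on the affine manifold $\mathcal{M}:=\{u\in V:\overline{u}=m\}$ and to invoke the abstract Łojasiewicz–Simon theorem for analytic functionals with Fredholm linearization, following the submanifold approach of \cite{RUPP}. Here $\mathcal{M}$ is modeled on the closed subspace $V_0:=\{v\in V:\overline{v}=0\}$, with ambient Hilbert space $H_0:=\{v\in H:\overline{v}=0\}$ and orthogonal projection $P_0:H\to H_0$, $P_0 f=f-\overline f$. Translating $u^\star$ to the origin, we set $\mathcal{F}(w):=\E(u^\star+w)$ for $w$ in a small ball of $V_0$ with $u^\star+w\in U$; its $H_0$-gradient is the constrained gradient $\nabla_{H_0}\mathcal{F}(w)=P_0\,\nabla\E(u^\star+w)=\nabla\E(u^\star+w)-\overline{\nabla\E(u^\star+w)}$. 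Condition \eqref{LS-cp} says exactly that $\nabla_{H_0}\mathcal{F}(0)=0$, and since the $H$-norm and the $H_0$-norm of a mean-free function coincide, the right-hand side of \eqref{LS-formula} equals $\|\nabla_{H_0}\mathcal{F}(w)\|_{H_0}$. It therefore suffices to verify the two structural hypotheses of the abstract result: analyticity of $\E$ (equivalently of $\nabla\E$) near $u^\star$, and the Fredholm property, of index zero, of the linearization of $\nabla_{H_0}\mathcal{F}$ at the origin.

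\emph{Analyticity.} This is the step that forces us into $V=H^2$ rather than the energy space $H^1$. Writing $\nabla\E(u)=-\widetilde a(u)\,\Delta u-\tfrac12\widetilde a'(u)\,|\nabla u|^2+\widetilde\Psi'(u)$, we note that, since $V\hookrightarrow C(\overline{\Omega})$ and the range of $u^\star$ is a compact subset of $(-1+\widetilde\delta/2,1-\widetilde\delta/2)$, for $u$ in the open set $\mathcal{O}:=\{u\in V:-1+\widetilde\delta/2<u(x)<1-\widetilde\delta/2\ \forall x\in\overline{\Omega}\}$ the Nemytskii operators $u\mapsto\widetilde a(u),\widetilde a'(u),\widetilde\Psi'(u)$ are analytic from $\mathcal{O}$ into $C(\overline{\Omega})$, by \eqref{APSI-ass} and the analyticity of superposition operators with analytic symbols on $C(\overline{\Omega})$. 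Moreover $u\mapsto\Delta u$ is bounded linear $V\to H$, while $u\mapsto|\nabla u|^2$ is a continuous quadratic — hence polynomial, hence analytic — map $V\to L^2(\Omega)$, thanks to $H^1(\Omega)\hookrightarrow L^4(\Omega)$; multiplication $C(\overline{\Omega})\times L^2(\Omega)\to L^2(\Omega)$ being continuous bilinear, composing these facts yields that $\nabla\E:\mathcal{O}\to H$ is analytic. The same bookkeeping shows $\E:\mathcal{O}\to\R$ is analytic and of class $C^3$.

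\emph{Fredholm property and conclusion.} The linearization $L:=D(\nabla\E)(u^\star):V\to H$ reads
\begin{align*}
Lv=-\div\big(\widetilde a(u^\star)\nabla v\big)-\div\big(\widetilde a'(u^\star)v\,\nabla u^\star\big)+\widetilde a'(u^\star)\nabla u^\star\cdot\nabla v+\tfrac12\widetilde a''(u^\star)|\nabla u^\star|^2 v+\widetilde\Psi''(u^\star)v,
\end{align*}
and we split $L=\mathcal{A}+\mathcal{K}$ with $\mathcal{A}v:=-\div(\widetilde a(u^\star)\nabla v)+v$. Since $u^\star\in H^3(\Omega)\hookrightarrow C^1(\overline{\Omega})$, the coefficient $\widetilde a(u^\star)\in W^{1,\infty}(\Omega)$ is uniformly positive on the range of $u^\star$ (there $\widetilde a$ inherits the non-degeneracy \eqref{a-ndeg} of $a$), while $\nabla u^\star\in L^\infty(\Omega)$ and $\Delta u^\star\in L^6(\Omega)$; hence $\mathcal{A}:V\to H$ is an isomorphism by elliptic regularity (Proposition \ref{NP-nc}), whereas $\mathcal{K}:V\to H$ — each of whose terms is the product of a fixed $L^\infty$ or $L^6$ function with $v$ or $\nabla v$ — factors through the compact embedding $V=H^2(\Omega)\hookrightarrow H^1(\Omega)$ and is therefore compact. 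Thus $L$ is Fredholm of index $0$. Finally $D(\nabla_{H_0}\mathcal{F})(0)=P_0\,L|_{V_0}=P_0\circ L\circ\iota_0$, where $\iota_0:V_0\hookrightarrow V$ is the inclusion — Fredholm of index $-1$, as the mean functional is a nonzero bounded form on $V$ (note $1\in V$, $\overline 1\neq0$) — and $P_0:H\to H_0$ is Fredholm of index $+1$; the composition is then Fredholm of index $0$. With analyticity, the criticality \eqref{LS-cp}, and this Fredholm property in hand, the abstract Łojasiewicz–Simon inequality of \cite{RUPP} applies to $\E$ on $\mathcal{M}$ near $u^\star$, producing $\theta\in(0,\tfrac12]$, $C>0$ and $\beta>0$ for which \eqref{LS-formula} holds whenever $u\in U$, $\overline u=m$ and $\|u-u^\star\|_V\leq\beta$. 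I expect the analyticity step to be the main obstacle: one must check carefully that in the $H^2$-framework the quadratic term $\widetilde a'(u)|\nabla u|^2$ and the composition operators assemble into a genuinely \emph{analytic} (not merely $C^\infty$) map into $H$ — precisely the feature that obstructs working in $H^1$.
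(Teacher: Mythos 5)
Your proposal is correct and follows essentially the same route as the paper: analyticity of $\E$ and $\nabla\E$ on $U$ via superposition operators into $C(\overline{\Omega})$ together with bounded (multi)linear maps, the Fredholm property of the linearization through the splitting into the isomorphism $v\mapsto-\div(\widetilde a(u^\star)\nabla v)+v$ (Proposition \ref{NP-nc}) plus a compact remainder, and an application of the abstract constrained Łojasiewicz--Simon result of \cite{RUPP} with the mean-zero projection. The only presentational difference is that you translate to the origin and work on the linear subspace $V_0$, computing the index of $P_0\circ L\circ\iota_0$ by composing Fredholm indices, whereas the paper keeps the constraint as the nodal set of $\mathcal{G}(u)=\int_\Omega u-m|\Omega|$ and lets \cite[Corollary 5.2]{RUPP} supply the projection; the two formulations are equivalent here since the constraint is affine.
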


The strategy of the proof consists in interpreting $u^\star$ as a constrained critical point of $\E$ on a manifold $\mathcal{M}$ defined as
the nodal set of a suitable map $\mathcal{G}:V\to \R$, see \eqref{manifold} below, accounting for the conservation of mass of the model. Accordingly,
hinging upon the abstract result \cite[Corollary 5.2]{RUPP} (see also \cite{CHILL}), we will obtain a refined Lojasiewicz–Simon gradient inequality, that will yield the claimed  inequality \eqref{LS-formula}.

\subsection{Proof of Theorem \ref{LS}.}
We proceed by recasting our problem in the framework of \cite[Corollary 5.2]{RUPP} throughout the subsequent three steps.

\smallskip {\bf Step 1. Analyticity of  $\E$ e $\nabla \E$ on $U$.}
We will make use of the following result on the analyticity of superposition functions.

\begin{lemma}
\label{ana-psi}
If $f:(-1+ \frac{\widetilde{\delta}}{2},1-\frac{\widetilde{\delta}}{2})\to\R$ is analytic, then the map
$$\mathcal{F}:U\to  C(\overline{\Omega}),\qquad u\mapsto f(u)$$ is analytic.
\end{lemma}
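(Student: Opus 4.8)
The goal is to show that the superposition (Nemytskii) operator $\mathcal{F}\colon U\to C(\overline\Omega)$, $u\mapsto f(u)$, is real analytic, where $f$ is real analytic on the slightly larger interval $(-1+\tfrac{\widetilde\delta}{2},1-\tfrac{\widetilde\delta}{2})$ and $U$ consists of functions valued in the compactly contained interval $[-1+\widetilde\delta,1-\widetilde\delta]$. The plan is to produce, around any fixed $u_0\in U$, a locally convergent power series representation of $\mathcal{F}$ with coefficients that are bounded multilinear maps $V\supset \{u\in H^2:\partial_\n u=0\}\to C(\overline\Omega)$. The natural candidate is the Taylor expansion $\mathcal{F}(u_0+h)=\sum_{n\ge 0}\frac{1}{n!}f^{(n)}(u_0)\,h^n$, and everything reduces to (i) checking that $C(\overline\Omega)$ is a Banach algebra so that pointwise products of elements of $C(\overline\Omega)$ stay in $C(\overline\Omega)$ with $\|gh\|_\infty\le\|g\|_\infty\|h\|_\infty$, and (ii) quantitative control of the radius of convergence uniformly over $u_0$.

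First I would record the key geometric fact: since $V\hookrightarrow C(\overline\Omega)$ continuously (as $d\le 3$ and $V\subset H^2(\Omega)$) and $U$ is open in $V$, for $u_0\in U$ there is $r_0>0$ such that $\|h\|_V\le r_0$ forces $\|h\|_{C(\overline\Omega)}\le \rho_0$ with $u_0(x)+t\,h(x)$ remaining in a fixed compact subinterval $K\subset(-1+\tfrac{\widetilde\delta}{2},1-\tfrac{\widetilde\delta}{2})$ for all $x\in\overline\Omega$, $t\in[0,1]$ — here one uses that $u_0(\overline\Omega)\subset[-1+\widetilde\delta,1-\widetilde\delta]$ has positive distance to the endpoints of the analyticity interval. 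Real analyticity of $f$ on a neighborhood of the compact set $K$ gives uniform bounds $|f^{(n)}(s)|\le C_0\, n!\, R^{-n}$ for all $s\in K$ and some $C_0,R>0$ independent of the point. Then I would estimate the $n$-th term of the series in $C(\overline\Omega)$: using the Banach algebra property, $\big\|\tfrac{1}{n!}f^{(n)}(u_0)h^n\big\|_{C(\overline\Omega)}\le C_0 R^{-n}\|h\|_{C(\overline\Omega)}^n\le C_0 R^{-n}(C_V\|h\|_V)^n$, where $C_V$ is the embedding constant. Hence for $\|h\|_V<R/C_V$ the series converges absolutely and uniformly in $C(\overline\Omega)$; the map $h\mapsto h^n$ (pointwise $n$-th power) is the restriction to the diagonal of the bounded symmetric $n$-linear map $(h_1,\dots,h_n)\mapsto h_1\cdots h_n$ from $V^n$ to $C(\overline\Omega)$ (bounded because $V\hookrightarrow C(\overline\Omega)$ and $C(\overline\Omega)$ is a Banach algebra), so the partial sums are continuous polynomials on $V$ with values in $C(\overline\Omega)$. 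A uniform limit of such polynomials with the stated coefficient bounds is, by the standard definition of analyticity between Banach spaces (see e.g. the framework cited in \cite{RUPP}), real analytic at $u_0$; since $u_0\in U$ was arbitrary and a concrete convergence radius independent of $u_0$ within fixed compact pieces of $U$ has been exhibited, $\mathcal{F}$ is analytic on all of $U$. One should also verify that $\mathcal{F}(u_0+h)$ indeed equals $\sum_n\frac{1}{n!}f^{(n)}(u_0)h^n$ pointwise in $\overline\Omega$: fix $x$, apply the scalar Taylor theorem to $t\mapsto f(u_0(x)+t\,h(x))$ at $t=1$, and note the tail estimate is uniform in $x$ by the bound on $K$.

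The main obstacle — really the only subtlety — is the passage from the pointwise/scalar statement to the Banach-space statement with genuinely \emph{uniform} (in $u_0$ and $x$) estimates: one must ensure the radius of convergence does not degenerate as $u_0$ approaches the boundary of $U$ in $V$, but this does not happen because $U$ is defined by the strict inequality $-1+\widetilde\delta<u<1-\widetilde\delta$ while $f$ is analytic on the strictly larger interval $(-1+\tfrac{\widetilde\delta}{2},1-\tfrac{\widetilde\delta}{2})$, so there is a fixed positive gap $\tfrac{\widetilde\delta}{2}$ available; the analyticity bounds $|f^{(n)}(s)|\le C_0 n! R^{-n}$ hold uniformly on the compact $[-1+\tfrac{3\widetilde\delta}{4},1-\tfrac{3\widetilde\delta}{4}]\supset[-1+\widetilde\delta,1-\widetilde\delta]$, with $R$ comparable to $\tfrac{\widetilde\delta}{4}$. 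Everything else is the routine bookkeeping of composing the Banach-algebra estimate with the Sobolev embedding, and I would present it compactly rather than grinding through constants.
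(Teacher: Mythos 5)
Your proposal is correct and follows essentially the same route as the paper: uniform Cauchy-type estimates for $f$ on a compact subinterval of its analyticity domain, pointwise-defined bounded multilinear coefficient maps, the Banach algebra structure of $C(\overline{\Omega})$, and the embedding $V\hookrightarrow C(\overline{\Omega})$. The only cosmetic difference is that the paper first proves analyticity of $u\mapsto f(u)$ as a map on an open subset of $C(\overline{\Omega})$ and then composes with the analytic inclusion $U\to C(\overline{\Omega})$, whereas you fold the embedding constant directly into the radius of convergence of the series on $V$.
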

\begin{proof}
We argue as in the proof of \cite[Corollary 4.6]{CHILL}. We preliminarily observe that $f$ is uniformly analytic in $J=(-1+\widetilde{\delta},1-\widetilde{\delta})$. This means that there exists $0< \rho<\frac{\widetilde{\delta}}{2}$ such that the power series expansion of $f$ near any given $\widehat{s}\in J$
\begin{equation}
\label{ef}
f(s)=\sum_n \alpha_n(\widehat{s})(s-\widehat{s})^n,\qquad \alpha_n(\widehat{s})\in \R,
\end{equation}
converges absolutely for every $s\in (-1,1):|s-\widehat{s}|<\rho$.
Let $\mathcal{U}:=\{u\in C(\overline{\Omega}): u(x)\in J,\,\forall \, x\in\overline{\Omega}\}$,
noticing that $\mathcal U$ is an open set of $C(\overline{\Omega})$, and
fix $\widehat{u}\in\mathcal{U}$. Defining the ball $B_\rho(\widehat{u})=\{v\in C(\overline{\Omega}):\|v-\widehat{u}\|_{L^\infty(\Omega)}<\rho\}$, we learn from \eqref{ef} that
$$
f(u)=\sum_n a_n(u-\widehat{u})^n\quad\text{in}\quad C(\overline{\Omega}),
$$
for any $u\in B_\rho(\widehat{u})$, where the multilinear forms $a_n:[C(\overline{\Omega})]^n\to C(\overline{\Omega})$ and $a_n(u-\widehat{u})^n:=a_n(u-\widehat{u},\dots,u-\widehat{u})$  by the rule $x\mapsto \alpha_n(\widehat{u}(x))(u(x)-\widehat{u}(x))^n$, for any $x\in \overline{\Omega}$.
Accordingly, since $\widehat{u}$ is arbitrary in $\mathcal{U}$, the map
$$
 \mathcal{U}\to C(\overline{\Omega}), \qquad  u\mapsto f(u)
 $$ is analytic.
In order to prove that $\mathcal{F}$ is analytic as a map from $U$ to $C(\overline{\Omega})$, we rely on the continuous embedding $U\hookrightarrow C(\overline{\Omega})$. Since the inclusion $i:U \to C(\overline{\Omega})$ is analytic, $i(U)\subset \mathcal{U}$ and the composition of analytic maps is analytic (see e.g. \cite[Theorem 2.2]{RUPP}), the conclusion immediately follows.
\end{proof}

We are now ready to prove the required analyticity of $\E$ and $\nabla \E$.

\begin{lemma}
\label{E-analytic}
The functional $\E$ is analytic on $U$.
\end{lemma}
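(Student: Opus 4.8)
The plan is to split the functional as $\E=\E_1+\E_2$, where
$\E_1(u)=\tfrac12\int_\Omega \widetilde a(u)\,|\nabla u|^2\,\d x$ and
$\E_2(u)=\int_\Omega \widetilde\Psi(u)\,\d x$, and to exhibit each summand as a finite composition of maps between Banach spaces that are either bounded and multilinear (hence analytic) or analytic thanks to Lemma~\ref{ana-psi}. Since a finite linear combination of analytic maps is analytic, and since composition preserves real analyticity of maps between Banach spaces (see \cite[Theorem~2.2]{RUPP}), this will give the claim.

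First I would treat $\E_2$. By \eqref{APSI-ass} the function $\widetilde\Psi$ is analytic on $(-1+\widetilde\delta/2,\,1-\widetilde\delta/2)$, so Lemma~\ref{ana-psi} yields that the superposition map $u\mapsto \widetilde\Psi(u)$ is analytic from $U$ into $C(\overline\Omega)$. Composing with the bounded linear functional $g\mapsto \int_\Omega g\,\d x$ on $C(\overline\Omega)$ (which is analytic, being linear and continuous) shows that $\E_2$ is analytic on $U$. For $\E_1$, the point is to package the quadratic gradient term correctly: the linear map $u\mapsto \nabla u$ is bounded from $V$ into $L^4(\Omega;\R^d)$ — here the restriction $d=2,3$ enters, via $\nabla u\in H^1(\Omega)\hookrightarrow L^6(\Omega)\hookrightarrow L^4(\Omega)$ — hence analytic; the squaring map $w\mapsto |w|^2$ is bounded and homogeneous of degree two from $L^4(\Omega;\R^d)$ into $L^2(\Omega)$, hence analytic; therefore $u\mapsto |\nabla u|^2$ is analytic from $U$ into $L^2(\Omega)$. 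On the other hand, $u\mapsto \widetilde a(u)$ is analytic from $U$ into $C(\overline\Omega)$ by Lemma~\ref{ana-psi} and \eqref{APSI-ass}. The pointwise product $C(\overline\Omega)\times L^2(\Omega)\to L^2(\Omega)$, $(g,h)\mapsto gh$, is a bounded bilinear map, hence analytic, so composing with $u\mapsto\big(\widetilde a(u),|\nabla u|^2\big)$ gives that $u\mapsto \widetilde a(u)|\nabla u|^2$ is analytic from $U$ into $L^2(\Omega)$; finally composing with $h\mapsto \tfrac12\int_\Omega h\,\d x$, continuous on $L^2(\Omega)$ since $\Omega$ is bounded, shows that $\E_1$ is analytic on $U$. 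Adding the two contributions concludes the proof.

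There is no real obstacle here; the argument is essentially bookkeeping. The only steps requiring a little care are verifying the boundedness of the multilinear maps (the product and the squaring), which is where the choice of the intermediate space $L^4(\Omega;\R^d)$ for $\nabla u$ and the dimensional restriction $d\le 3$ are used, and invoking correctly the stability of analyticity under composition from \cite[Theorem~2.2]{RUPP}. The mildly delicate point — and the one I would state explicitly — is that the gradient part must be read as a composition of analytic maps with matching target spaces, rather than handled directly in the energy space $H^1(\Omega)$, which is precisely the reason the whole analysis is carried out on $V=\{u\in H^2(\Omega):\partial_\n u=0\}$ rather than on $H^1(\Omega)$.
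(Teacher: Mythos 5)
Your proof is correct and follows essentially the same route as the paper: both arguments reduce $\E$ to compositions of the superposition maps $u\mapsto\widetilde a(u),\widetilde\Psi(u)$ (analytic into $C(\overline\Omega)$ by Lemma~\ref{ana-psi}) with bounded (multi)linear maps — the product $C(\overline\Omega)\times L^2(\Omega)\to L^2(\Omega)$, the quadratic map $u\mapsto|\nabla u|^2$ realized as the diagonal of a bounded bilinear form, and integration. The only cosmetic difference is that you factor the gradient term explicitly through $L^4(\Omega;\R^d)$, whereas the paper states directly that $(u,v)\mapsto\nabla u\cdot\nabla v$ is bounded bilinear from $H^2(\Omega)\times H^2(\Omega)$ to $L^2(\Omega)$; the underlying embedding is the same.
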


\begin{proof}
The following properties hold:
 \begin{enumerate}
 \item[(M1)] the pointwise multiplications $C(\overline{\Omega})\times L^2(\Omega)\to L^2(\Omega): (f,g) \mapsto fg$ is analytic;
 \smallskip
 \item[(M2)] The map $L^2(\Omega)\to\R : f\mapsto \int_\Omega f \, \d x$ is analytic;
  \smallskip
 \item[(M3)] The map $H^2(\Omega)\to L^2(\Omega): u \mapsto \partial_i\partial_j u$, for any $i, j$, is analytic;
  \smallskip
\item[(M4)] The map $H^2(\Omega)\to L^2(\Omega): u\mapsto |\nabla u|^2$ is analytic.
\end{enumerate}
Indeed, (M1)-(M3) follow as the maps are (multi)linear and bounded (see e.g. \cite[Example 2.3]{RUPP}). As to (M4), the map is analytic since it is the diagonal of the bounded bilinear map $a: H^2(\Omega)\times H^2(\Omega)\to L^2(\Omega)$ such that $a(u,v)=\nabla u\cdot \nabla v$.

Now, by \eqref{APSI-ass} and Lemma \ref{ana-psi}, the map $U\to C(\overline{\Omega}): u\mapsto \widetilde{a}(u)$ is analytic.
Since the pointwise multiplication $C(\overline{\Omega})\times L^2(\Omega)\to L^2(\Omega)$ is analytic (see (M1)), the map $U\to L^2(\Omega):u\mapsto \frac{\widetilde{a}(u)}{2}|\nabla u|^2$
is analytic in light of (M4). Then, as the composition of analytic maps is analytic, we deduce from (M2) that
$$
 U\to \R : \ u\mapsto \int_\Omega \frac{\widetilde{a}(u)}{2}|\nabla u|^2 \, \d x \text{ is analytic.}
$$
Analogously, the analyticity of the map $U\to \R: \ u\mapsto \int_\Omega \widetilde{\Psi}(u)\, \d x$ easily follows. Hence, we obtain the desired claim.
\end{proof}

\begin{lemma} The map $\nabla \E:U\to L^2(\Omega)$
is analytic.
\end{lemma}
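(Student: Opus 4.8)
The plan is to recognize $\nabla\E$ as a composition and product of maps each of which has already been shown (or can be shown) to be analytic, exactly in the spirit of the previous lemma. Recall from \eqref{GRAD-E} that
\[
\nabla\E(u) = -\div(\widetilde a(u)\nabla u) + \frac{\widetilde a'(u)}{2}|\nabla u|^2 + \widetilde\Psi'(u),
\]
so the first step is to re-expand the divergence term so that every occurrence of the unknown is hit only by maps we control. Writing $\div(\widetilde a(u)\nabla u) = \widetilde a(u)\Delta u + \widetilde a'(u)|\nabla u|^2$, we obtain
\[
\nabla\E(u) = -\widetilde a(u)\,\Delta u - \frac{\widetilde a'(u)}{2}|\nabla u|^2 + \widetilde\Psi'(u).
\]
This is the key algebraic simplification: it reduces $\nabla\E$ to sums and pointwise products of the four ``atoms'' $u\mapsto\widetilde a(u)$, $u\mapsto\widetilde a'(u)$, $u\mapsto\widetilde\Psi'(u)$ (all $C(\overline\Omega)$-valued), $u\mapsto\Delta u$ and $u\mapsto|\nabla u|^2$ (both $L^2(\Omega)$-valued).

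The second step is to invoke the tools already assembled. By \eqref{APSI-ass} the functions $\widetilde a,\widetilde\Psi\in C^3(\R)$ are analytic on $(-1+\widetilde\delta/2,1-\widetilde\delta/2)$, hence so are $\widetilde a'$ and $\widetilde\Psi'$ (derivatives of analytic functions remain analytic on the same interval, and they are still continuous on all of $\R$ since $\widetilde a,\widetilde\Psi\in C^3$). Therefore Lemma \ref{ana-psi} applies to each of $\widetilde a$, $\widetilde a'$, $\widetilde\Psi'$, giving that $u\mapsto\widetilde a(u)$, $u\mapsto\widetilde a'(u)$, $u\mapsto\widetilde\Psi'(u)$ are analytic maps $U\to C(\overline\Omega)$. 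By property (M3) in the proof of Lemma \ref{E-analytic}, $u\mapsto\Delta u=\sum_i\partial_i\partial_i u$ is analytic $H^2(\Omega)\to L^2(\Omega)$, and since $V\hookrightarrow H^2(\Omega)$ continuously (indeed $V$ is a closed subspace of $H^2(\Omega)$), it is analytic $U\to L^2(\Omega)$ as well; by (M4), $u\mapsto|\nabla u|^2$ is analytic $U\to L^2(\Omega)$. Finally, property (M1) — analyticity of the pointwise product $C(\overline\Omega)\times L^2(\Omega)\to L^2(\Omega)$ — combined with the fact that compositions, finite sums and finite products of analytic maps between Banach spaces are analytic (see \cite[Theorem 2.2]{RUPP} and \cite[Example 2.3]{RUPP}), shows that $u\mapsto\widetilde a(u)\Delta u$, $u\mapsto\widetilde a'(u)|\nabla u|^2$ are analytic $U\to L^2(\Omega)$, and $u\mapsto\widetilde\Psi'(u)$ is analytic $U\to C(\overline\Omega)\hookrightarrow L^2(\Omega)$. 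Summing, $\nabla\E:U\to L^2(\Omega)$ is analytic.

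I do not expect a genuine obstacle here; the lemma is essentially a bookkeeping consequence of Lemmas \ref{ana-psi} and \ref{E-analytic}. The one point that deserves a line of care is that one must handle the $\widetilde a(u)\Delta u$ term via the rewritten (non-divergence) form rather than literally differentiating $\widetilde a(u)\nabla u$, since the naive factorization would land in $H^{-1}(\Omega)$; writing things out in terms of $\Delta u$ and $|\nabla u|^2$ keeps everything in $L^2(\Omega)$ and lets (M1), (M3), (M4) do the work. A second minor point is ensuring that $\widetilde a'$ and $\widetilde\Psi'$ still satisfy the hypotheses of Lemma \ref{ana-psi}, i.e.\ are analytic on the slightly larger interval $(-1+\widetilde\delta/2,1-\widetilde\delta/2)$ — this is immediate because the derivative of a function analytic on an open interval is analytic on the same interval. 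With these observations the proof is short.
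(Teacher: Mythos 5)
Your proof is correct and follows essentially the same route as the paper: rewrite $\nabla\E(u)=-\widetilde a(u)\Delta u-\frac{\widetilde a'(u)}{2}|\nabla u|^2+\widetilde\Psi'(u)$ and then combine Lemma \ref{ana-psi} with properties (M1), (M3), (M4) and closure of analyticity under composition, sums and products. Your version is merely more explicit about the bookkeeping (in particular the remark that $\widetilde a'$ and $\widetilde\Psi'$ inherit analyticity on the same interval), which the paper compresses into ``arguing as in Lemma \ref{E-analytic}.''
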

\begin{proof} By standard computations and \eqref{GRAD-E}, we have
$$
\nabla \E(u)=-\widetilde{a}(u)\Delta u-\frac{\widetilde{a}'(u)}{2} |\nabla u|^2+\widetilde{\Psi}'(u)\in H.
$$
Hence, by arguing as in Lemma \ref{E-analytic} for the analyticity of $\E$, we immediately learn that the map
$U\to L^2(\Omega): u\mapsto -\frac{\widetilde{a}'(u)}{2} |\nabla u|^2+\widetilde{\Psi}'(u)$
is analytic.
Finally,
the map $U \to L^2(\Omega): u\mapsto \widetilde{a}(u)\Delta u$ is analytic by (M1) and (M3).
\end{proof}

\smallskip

{\bf Step 2. $\E''(u^\star)$ is a Fredholm operator of index zero on $V$.}
First of all, for $u \in U$, the second derivative $\E''(u)= (\nabla \E )' (u)$ is such that
$$
\langle \E''(u)v,z\rangle_{V' \times V} =
\int_\Omega \frac{\widetilde{a}''(u)}{2}|\nabla u|^2vz+\widetilde{a}'(u)v\nabla u\cdot \nabla z+\widetilde{a}'(u)z\nabla u\cdot \nabla v+\widetilde{a}(u)\nabla v\cdot \nabla z+\widetilde{\Psi}''(u)vz \, \d x,
$$
for any $v, z \in V$. We notice that
\begin{align*}
&\norm{ \frac{\widetilde{a}''(u)}{2} |\nabla u|^2 v + \widetilde{a}'(u) \nabla u \cdot \nabla v + \widetilde{\Psi}''(u) v }_{L^2(\Omega)}
\\
&\quad \leq
C \norm{\widetilde{a}''(u)}_{L^\infty(\Omega)} \| \nabla u\|_{L^4(\Omega)}^2
\| v\|_{L^\infty(\Omega)} + \norm{\widetilde{a}'(u)}_{L^\infty(\Omega)} \| \nabla u\|_{L^4(\Omega)} \| \nabla v\|_{L^4(\Omega)}
\\
& \qquad
+ \| \widetilde{\Psi}''(u)\|_{L^\infty(\Omega)} \| v\|_{L^2(\Omega)},
\end{align*}
and
\begin{align*}
&\norm{ -\div( \widetilde{a}(u) \nabla v) -\div(\widetilde{a}'(u) v \nabla u) }_{L^2(\Omega)}
\\
&\quad  \leq
\| \widetilde{a}(u) \|_{L^\infty(\Omega)} \| \Delta v\|_{L^2(\Omega)} +
2 \| \widetilde{a}'(u)\|_{L^\infty(\Omega)} \| \nabla u\|_{L^4(\Omega)} \| \nabla v\|_{L^4(\Omega)}
\\
&\qquad
+ \| \widetilde{a}''(u) \|_{L^\infty(\Omega)} \| \nabla u\|_{L^4(\Omega)}^2 \| v\|_{L^\infty(\Omega)}
+ \| \widetilde{a}'(u)\|_{L^\infty(\Omega)} \| v\|_{L^\infty(\Omega)} \| \Delta u\|_{L^2(\Omega)}.
\end{align*}
Here, the right-hand sides are both finite since $u\in U$, $v \in V$ and assumption \eqref{APSI-ass} is in place.
Therefore, an integration by parts leads to
$$
\E''(u)v =-\div(\widetilde{a}(u)\nabla v)-\div (\widetilde{a}'(u)v\nabla u)+\frac{\widetilde{a}''(u)}{2}|\nabla u|^2v+\widetilde{a}'(u)\nabla u\cdot \nabla v+\widetilde{\Psi}''(u)v\in H,
$$
for any $v \in V$.

\medskip Let us now take $\widehat{u} \in U\cap H^3(\Omega)$. For any $v \in V$, we write
$$
\E''(\widehat{u})v =Av+Kv,
$$
where
$$
A v = -\div(\widetilde{a}(\widehat{u})\nabla v) + v,
$$
and
$$
K v = -\div (\widetilde{a}'(\widehat{u})v\nabla \widehat{u})+\frac{\widetilde{a}''(\widehat{u})}{2}|\nabla \widehat{u}|^2v+\widetilde{a}'(\widehat{u})\nabla \widehat{u}\cdot \nabla v - v + \widetilde{\Psi}''(\widehat{u}) v.
$$
First of all, we claim that $A \in \mathcal{L}(V,H)$ is an isomorphism, which implies that $A$ is a Fredholm operator of index zero. To this end, we first observe that
\begin{align*}
\| -\div (\widetilde{a}(\widehat{u}) \nabla v) + v \|_{L^2(\Omega)} &\leq \| \widetilde{a}(\widehat{u}) \|_{L^\infty(\Omega)} \| \Delta v\|_{L^2(\Omega)} + \| \widetilde{a}'(\widehat{u})\|_{L^\infty(\Omega)} \| \nabla \widehat{u}\|_{L^4(\Omega)} \| \nabla v\|_{L^4(\Omega)}  + \| v\|_{L^2(\Omega)}
\\
&\leq C \| v\|_{H^2(\Omega)}.
\end{align*}
Besides, since $\widehat{u} \in W^{1,r}(\Omega)$, with $r>3$, we immediately infer from Proposition \ref{NP-nc} that $A$ is an isomorphism from $V$ onto $H$ as desired.

Next, we show that $K$ is a compact operator from $V$ to $H$.
We notice that
$$
K v=   - \widetilde{a}'(\widehat{u}) \Delta \widehat{u} \, v - \frac{\widetilde{a}''(\widehat{u})}{2}|\nabla \widehat{u}|^2v -v + \widetilde{\Psi}''(\widehat{u})v,
$$
and
\begin{align*}
\| K v\|_{L^2(\Omega)}
&\leq \| \widetilde{a}'(\widehat{u}) \|_{L^\infty(\Omega)} \| \Delta \widehat{u}\|_{L^2(\Omega)} \| v\|_{L^\infty(\Omega)}
+ C \| \widetilde{a}''(\widehat{u}) \|_{L^\infty(\Omega)} \| \nabla \widehat{u}\|_{L^4(\Omega)}^2 \| v\|_{L^\infty(\Omega)}
\\
&\quad
+ \| v\|_{L^2(\Omega)} + \| \widetilde{\Psi}''(\widehat{u}) \|_{L^\infty(\Omega)} \| v \|_{L^2(\Omega)}
\\
&
\leq C \| v\|_{H^2(\Omega)},
\end{align*}
where $C$ depends on $\widetilde{a}$, $\| \widehat{u}\|_{H^2(\Omega)}$ and $\widetilde{\delta}$.
Furthermore, since
\begin{align*}
\partial_{x_i} Kv &= - \widetilde{a}''(\widehat{u}) \partial_{x_i} \widehat{u} \,\Delta \widehat{u} \, v
-\widetilde{a}'(\widehat{u})\partial_{x_i} \Delta \widehat{u} \, v
-\widetilde{a}'(\widehat{u}) \Delta \widehat{u} \, \partial_{x_i} v
\\
& \quad - \frac{\widetilde{a}'''(\widehat{u})}{2} \partial_{x_i} \widehat{u} |\nabla \widehat{u}|^2 \, v
- \widetilde{a}''(\widehat{u}) \partial_{x_i} \nabla \widehat{u} \cdot \nabla \widehat{u} \, v
- \frac{\widetilde{a}''(\widehat{u})}{2}|\nabla \widehat{u}|^2 \, \partial_{x_i} v
\\
& \quad -\partial_{x_i} v + \widetilde{\Psi}'''(\widehat{u}) \partial_{x_i}\widehat{u}\, v
+ \widetilde{\Psi}''(\widehat{u}) \partial_{x_i} v, 
\end{align*}
we find that
\begin{align*}
\| \nabla K v\|_{L^2(\Omega)}
&
\leq \norm{\widetilde{a}''(\widehat{u})}_{L^\infty(\Omega)}
\norm{\widehat{u}}_{W^{1,\infty}(\Omega)}
\norm{\Delta \widehat{u}}_{L^2(\Omega)}
\| v\|_{L^\infty(\Omega)}
+ \norm{\widetilde{a}'(\widehat{u})}_{L^\infty(\Omega)}
\| \widehat{u}\|_{H^3(\Omega)}
\| v\|_{L^\infty(\Omega)}
\\
& \quad
+  \norm{\widetilde{a}'(\widehat{u})}_{L^\infty(\Omega)}
\| \Delta \widehat{u}\|_{L^6(\Omega)}
\| v\|_{W^{1,3}(\Omega)}
+ C \norm{\widetilde{a}'''(\widehat{u})}_{L^\infty(\Omega)}
\| \widehat{u}\|_{W^{1,6}(\Omega)}^3
 \| v\|_{L^\infty(\Omega)}
 \\
 & \quad
 + \norm{\widetilde{a}''(\widehat{u})}_{L^\infty(\Omega)}
 \| \widehat{u}\|_{W^{2,3}(\Omega)}
 \| \widehat{u}\|_{W^{1,6}(\Omega)}
 \| v\|_{L^\infty(\Omega)}
 \\
 & \quad
 + C \norm{\widetilde{a}''(\widehat{u})}_{L^\infty(\Omega)}
 \| \widehat{u}\|_{W^{1,6}(\Omega)}^2
 \| v\|_{W^{1,6}(\Omega)}
 \\
 & \quad
 + \| v\|_{H^1(\Omega)} + \| \widetilde{\Psi}'''(\widehat{u}) \|_{L^\infty(\Omega)}
 \| \widehat{u}\|_{W^{1,6}(\Omega)} \| v \|_{L^3(\Omega)}
+ \| \widetilde{\Psi}''(\widehat{u}) \|_{L^\infty(\Omega)}
\|  v\|_{H^1(\Omega)}
 \\
 & \leq C \| v\|_{H^2(\Omega)},
\end{align*}
where the constant $C$ depends on $\widetilde{a}$, $\| \widehat{u}\|_{H^3(\Omega)}$ and $\widetilde{\delta}$.
Hence, we deduce that $K$ is a bounded operator from $V$ to $H^1(\Omega)$, meaning that $K$ is a compact operator from $V$ to $L^2(\Omega)$, in light of the compact embedding $H^1(\Omega) \hookrightarrow L^2(\Omega)$.
Finally, by \cite[Proposition 2.5]{RUPP}, we conclude that $\E''(\widehat{u}):V\to H$ is a Fredholm operator of index zero. In particular, we can take $\widehat{u}=u^\star$.

\medskip
{\bf Step 3. The map $\mathcal{G}$.}
We define the map $\mathcal{G}:V\to \R$ by
$$\mathcal{G}(u)=\int_\Omega u \,\d x - m|\Omega|,$$
and the corresponding nodal set
\begin{equation}
\label{manifold}
\mathcal{M}=\big\{u\in U: \mathcal{G}(u)=0\big\},
\end{equation}
which contains all $u\in U$ such that $\overline{u}=m$.
As in \cite[Lemma 7.2]{RUPP}, it is readily seen that $\mathcal{G}:V\to \R$ is analytic. Indeed,
by the embedding $V\hookrightarrow C(\overline{\Omega})$ and the fact that the inclusion $i: V\to C(\overline{\Omega})$ is analytic, the conclusion follows from (M2). Besides, for all $u\in U$, we have
$$
\langle \mathcal{G}'(u), v\rangle_{V' \times V}=\int_\Omega 1\cdot v \, \d x,\quad \forall \, v\in V.
$$
In particular, $\mathcal{G}$ has an $H$-gradient given by $\nabla\mathcal{G}(u):=1\in H$ for all $u\in V$.
It immediately turns out that the (constant) map $u\mapsto \nabla \mathcal{G}(u):U\to H$ is analytic, and that $\mathcal{G}'(u):V\to\R$ is surjective.
Furthermore, for any $u\in V$, we have that $(\nabla\mathcal{G})'(u)=0$, implying that $(\nabla\mathcal{G})'(u):V\to H$ is compact.
\smallskip

We are now in the position to apply \cite[Theorem 3.1]{RUPP} at \emph{any $u\in \mathcal{M}$}, yielding that the set $\mathcal{M}$ is an analytic submanifold of $V$ of codimension $1$.
Furthermore, it follows from \cite[Proposition 3.3]{RUPP} that, for every $u\in \mathcal{M}$, the tangent space of $\mathcal{M}$ at $u$ is given by
\begin{equation}
\label{Tu}
\mathcal{T}_{u}\mathcal{M}=\ker(\mathcal{G}'(u))=\left\{v\in V:\int_\Omega v \, \d x=0\right\}.
\end{equation}
Finally, we consider $u^\star\in U\cap H^3(\Omega)$ with $\overline{u^\star}=m$. In particular, $u^\star\in \mathcal{M}$ and, since \eqref{LS-cp} holds true, it turns out from \eqref{Tu} that $u^\star$ is a constrained critical 
point\footnote{We recall that $u\in\mathcal{M}$ is a constrained critical point of $\mathcal{E}$ (or a critical point of $\mathcal{E}|_\mathcal{M}$) if
$(\nabla\mathcal{E}(u),v)=0$ for all $v\in \mathcal T_{u}\mathcal{M}$.}
of $\mathcal{E}$ on $\mathcal{M}$.
\medskip

{\bf Step 4. Conclusion of the proof of Theorem \ref{LS}.}
 Collecting the properties proved in the previous steps, all the required assumptions in \cite[Corollary 5.2]{RUPP} are in place for our energy functional $\mathcal{E}$, the map $\mathcal{G}$ defined in Step 3,
and the constrained critical point $u^\star$. Therefore, there exist $\theta \in \left(0,\frac12\right]$, $C>0$, $\beta>0$ such that
\begin{equation}
\label{LS-formula1}
| \E(u) - \E(u^\star)|^{1-\theta}
\leq C \norm{P(u)\nabla \E(u)}_{H}
\end{equation}
for all $u\in \mathcal{M}$ and
$\| u-u^\star\|_{V}\leq \beta$, where $P(u):H\to H$ is the orthogonal projection on the closure in $H$ of the tangent space $\mathcal{T}_{u}\mathcal{M}$.
Since, for every $u\in \mathcal{M}$, the projection $P(u)$ is given by
$$
P(u)y=y-\overline{y},\quad \forall \, y\in H,
$$
it then follows that \eqref{LS-formula1} is exactly the claimed inequality.
$\hfill \Box$

\bigskip

We are now ready to prove the Lojasiewicz–Simon inequality	 announced in Theorem \ref{LSg-intro}.

\subsection{Proof of Theorem \ref{LSg-intro}}
Let $\psi\in \mathcal{S}_m$. By Proposition \ref{esci}, we know that $\psi\in H^3(\Omega)$ and there exists $\delta_{\psi}\in \left(0,\frac12\right)$ such that
\begin{equation}
\label{sepsi}
\|\psi\|_{C(\overline{\Omega})} \leq 1-2\delta_\psi.
\end{equation}
Recalling the Sobolev embedding $H^2(\Omega)\hookrightarrow C(\overline{\Omega})$, we have
\begin{equation*}
\|u\|_{C(\overline{\Omega})}\leq C_S \|u\|_{H^2(\Omega)}
\end{equation*}
for some $C_S>0$. Accordingly, we set
 \begin{align*}
\omega_0:=\frac{\delta_\psi}{C_S}\label{vicino}
\end{align*}
Then, for any $u\in H^2(\Omega)$ such that
\begin{equation}
\label{baseee}\|u-\psi\|_{H^2(\Omega)}<\omega_0,
\end{equation}
it follows from \eqref{sepsi} that
\begin{align*}
\|u\|_{C(\overline{\Omega})}
\leq \|\psi\|_{C(\overline{\Omega})}
+\|u-\psi\|_{C(\overline{\Omega})}
< 1-\delta_\psi.
\end{align*}
Now, having in mind Theorem \ref{LS}, we set
$$
V=\{u\in H^2(\Omega):\partial_\n u=0\text{ on }\partial\Omega\},\qquad H=L^2(\Omega),$$
and
$$
U=\{u\in V:-1+\delta_\psi<u(x)<1-\delta_\psi,\quad \forall \, x\in \overline{\Omega}\},
$$
which is an open subset of $V$. Note that $U$ contains, in particular, all functions $u\in H^2(\Omega)$ satisfying  $\partial_\n u=0$ on $\partial \Omega$ and \eqref{baseee}.
Then, we define $\widetilde{\Psi}: \mathbb{R} \to \mathbb{R}$ such that $\widetilde{\Psi} \in C^3(\mathbb{R})$, $\widetilde{\Psi}|_{[-1+\delta_\psi,1-\delta_\psi]}=\Psi$ and $|\widetilde{\Psi}^{(j)}|$ are bounded for $j=1,2,3$, and similarly we extend $a$ to $\widetilde{a}\in C^3(\R)$. Next,
we introduce the functional $\mathcal{E}:V\to\R$
\begin{equation}
\label{ene-LS}
\mathcal{E}(u):= \int_{\Omega} \frac{\widetilde{a}(u)}{2}|\nabla u|^2 +\widetilde{\Psi}(u) \, \d x.
\end{equation}
Since $\widetilde{\Psi}$ and $\widetilde{a}$, are analytic in $(-1+\frac{\delta_\psi}{2},1-\frac{\delta_\psi}{2})$, the functional $\mathcal{E}$ is
analytic (see Lemma \ref{E-analytic}). Besides, $\mathcal{E}$ coincides with $E$ in $U$ and, for every $u\in U$, we have
\begin{equation}
\label{gra-LS}
\nabla \mathcal{E}(u)=\nabla{E}(u)=-\div(a(u)\nabla u)+\frac{a'(u)}{2} |\nabla u|^2+\Psi'(u)\in H.
\end{equation}
Since $\psi\in \mathcal{S}_m$, it is apparent that
$$\langle \nabla \E (\psi), v \rangle =0, \quad \forall \, v \in H \text{ with } \overline{v}=0,$$
namely $u^\star=\psi$ complies with assumption \eqref{LS-cp}.
Therefore, we can apply Theorem \ref{LS} with $\mathcal{E}$ as in \eqref{ene-LS}, which gives the following result:
 \emph{there exist $\theta \in \left(0,\frac12\right]$, $C_L>0$, $\beta>0$ such that
\begin{equation}
\label{LSaa}
|\mathcal{E}(u)- \mathcal{E}(\psi)|^{1-\theta}
\leq C_L \norm{\nabla \mathcal{E}(u)-\overline{\nabla\mathcal{E}(u)} }_{L^2(\Omega)}\!
\end{equation}
for all $u\in U$ such that $\overline{u}=m$ and
$\| u-\psi\|_{H^2(\Omega)}\leq \beta$.
}

\noindent
Since, without loss of generality, we can assume $\beta<\omega_0$, in light of  \eqref{gra-LS}, we immediately recognize in \eqref{LSaa} the desired inequality \eqref{LS0}. $\hfill \Box$

\section{Improved estimates for weak solutions}
\label{S-WEAK}
\setcounter{equation}{0}

The existence of global weak solutions to system \eqref{CH1}-\eqref{CH2} subject to \eqref{CH-bc}-\eqref{CH-ic} has been proved in \cite{SP2013} under the assumption $b \equiv 1$ and \eqref{a-ndeg}, in both two and three dimensions. The proof can be easily extended to the case with non-degenerate concentration-depending mobility satisfying \eqref{m-ndeg}, see also the analysis in \cite{ADG2013}.
As a result, for any $\phi_0\in \mathcal{V}_m$ with $m\in (-1,1)$, there exists $\phi: \Omega \times [0,\infty)\to \R$ departing from $\phi_0$ at $t=0$, with the following regularity properties
\begin{align}
\label{S1}
&\phi \in  L^\infty(0,T; H^1(\Omega))\cap L^2(0,T;H^2(\Omega))
\\
\label{S2}
&\phi \in L^{\infty}(\Omega\times (0,T)):  |\phi(x,t)|<1  \ \text{a.e. in }\Omega\times (0,\infty),
\\
\label{S3}
&\partial_t \phi \in L^2(0,T; H_{(0)}^{-1}(\Omega)),
\\
\label{S4}
& \mu \in L^2(0,T;H^1(\Omega)), \quad F'(\phi)\in L^2( 0,T;L^2(\Omega)),
\end{align}
for any $T>0$, which satisfies the system in the weak sense as in \eqref{e2}-\eqref{e3}. Furthermore, recalling the definition of the energy in \eqref{free-energy},
the following energy inequality holds true
\begin{equation}
\label{EE}
E(\phi(t))+ \int_0^t \left\| \sqrt{b(\phi(\tau))}\nabla \mu(\tau) \right\|_{L^2(\Omega)}^2 \, \d \tau \leq  E(\phi_0), \quad \forall \, t \geq 0.
\end{equation}
Besides, the total mass of the solution is preserved over time, namely
\begin{equation}
\label{cons-mass}
\overline{\phi(t)}=\frac{1}{|\Omega|}\int_\Omega \phi(t)\, \d x=\frac{1}{|\Omega|}\int_\Omega \phi_0\, \d x=\overline{\phi_0}=m,\quad \forall \, t\geq 0.
\end{equation}
\medskip

In this section, after recalling the basic properties of weak solutions, we establish some improved energy estimates, which in particular leads to the regularity class \eqref{WS1}-\eqref{WS4} when $d=2$ and \eqref{WS1-3}-\eqref{WS4-3} when $d=3$. In the sequel, the generic constant $C>0$ depends on $E(\phi_0)$ and on $\overline{\phi_0}$, but is independent of the specific initial datum.

\smallskip
\textbf{Energy bounds.}
Since $a(\cdot) \geq a_m$, $b(\cdot)\geq b_m$ in the interval $[-1,1]$, we  obtain from \eqref{EE} that
\begin{equation*}
 \int_\Omega |\nabla \phi(t)|^2 \, \d x
+ \int_0^t \left\| \nabla \mu(\tau) \right\|_{L^2(\Omega)}^2 \, \d \tau \leq C, \quad \forall \, t \geq 0.
\end{equation*}
Thanks to \eqref{cons-mass}, an application of the Poincar\'{e} inequality entails that
\begin{align}
\label{Linf-u-phi_1}
&\| \phi\|_{L^\infty( 0,\infty; H^1(\Omega))} \leq C.
\end{align}
Moreover, we also get
\begin{align}
\label{L2-u-mu_1}
&\int_0^\infty \| \nabla \mu (\tau) \|_{L^2(\Omega)}^2 \, \d \tau \leq C.
\end{align}
Besides, recalling that $\nabla A(\phi)= \sqrt{a(\phi)}\nabla \phi$ and observing that
\begin{equation}
\label{A-Linfty}
A(\phi) \in L^{\infty}(\Omega\times (0,\infty)): \quad |A(\phi(x,t))|\leq  \sqrt{a_M}  \quad  \text{a.e. in }\Omega\times (0,\infty),
\end{equation}
it follows from \eqref{Linf-u-phi_1} that
\begin{equation}
\label{A-H1}
\| A(\phi)\|_{L^\infty( 0,\infty; H^1(\Omega))} \leq C.
\end{equation}

\medskip
\textbf{Estimate of the time derivative.} We immediately infer from \eqref{CH1} that
\begin{equation}
\label{est-phit-H-1}
\| \partial_t \phi\|_{ H^{-1}_{(0)}(\Omega)} \leq C \| \nabla \mu\|_{L^2(\Omega)}.
\end{equation}
Hence, by \eqref{L2-u-mu_1},
\begin{equation}
\label{phit-0inf}
\int_0^\infty \| \partial_t \phi (\tau)\|_{ H^{-1}_{(0)}(\Omega)}^2 \, \d \tau
\leq C,
\end{equation}
which gives $\partial_t \phi \in L^2(0,\infty;  H^{-1}_{(0)}(\Omega))$.
\medskip
\smallskip

\textbf{$H^1$-estimate of the chemical potential.} First, observing that
$$
\Delta A(\phi)= \sqrt{a(\phi)}\Delta \phi+ \frac{a'(\phi)}{2 \sqrt{a(\phi)}}|\nabla \phi|^2 \quad \text{a.e. in }\Omega\times (0,\infty),
$$
we infer from \eqref{a-ndeg} and \eqref{inter-W14} that
\begin{align*}
\| \Delta A(\phi)\|_{L^2(\Omega)} 
\leq \sqrt{a_M} \| \Delta \phi\|_{L^2(\Omega)} 
+ \left\| \frac{a'(\phi)}{2 \sqrt{a(\phi)}} \right\|_{L^\infty(\Omega)} \| \phi\|_{L^\infty(\Omega)}\| \phi\|_{H^2(\Omega)}.
\end{align*}
Hence, we preliminarily deduce from \eqref{S1} and \eqref{A-H1} that $A(\phi) \in L^2(0,T;H^2(\Omega))$. 
\smallskip

Now, we proceed with the control of the total mass of $\mu$. It is convenient to observe that
\begin{equation}
\label{mu-equiv}
\mu= -\sqrt{a(\phi)}\Delta A(\phi) + \Psi'(\phi)\quad \text{a.e. in } \Omega \times (0,\infty).
\end{equation}
We recall a standard tool from \cite{MZ}: there exists a positive constant $\overline{C}$, depending only on $m$, such that
\begin{equation}
\label{MZ}
\int_\Omega| F'(\phi)|\, \d x\leq \overline{C}
\left|\int_\Omega F'(\phi)(\phi-\overline{\phi} )\, \d x\right|+\overline{C},
\end{equation}
where $\overline{C} \to +\infty$ as $|\overline{\phi_0}|\to 1$.
Multiplying \eqref{mu-equiv} by $\phi-\overline{\phi}$ and integrating over $\Omega$, we infer that
\begin{align*}
\int_{\Omega} \nabla A(\phi) \cdot \nabla \left(
\sqrt{a(\phi)} \left(\phi-\overline{\phi} \right) \right) \, \d x
&+
\int_{\Omega} F'(\phi) \left( \phi-\overline{\phi} \right) \, \d x
\\
&= \int_\Omega \left( \mu -\overline{\mu}\right) \phi \, \d x
+ \theta_0 \int_\Omega \phi (\phi-\overline{\phi}) \, \d x.
\end{align*}
Hence, by the Poincar\'{e} inequality \eqref{poincare} and \eqref{S2}, we have
\begin{align*}
\left| \int_{\Omega} F'(\phi) \left( \phi-\overline{\phi} \right) \, \d x \right|
&\leq \left| \int_\Omega \left( \mu -\overline{\mu}\right) \phi \, \d x \right|
+ \theta_0 \left| \int_\Omega \phi (\phi-\overline{\phi}) \, \d x\right|
\\
& \quad + \left| \int_{\Omega} \nabla A(\phi) \cdot
\sqrt{a(\phi)} \nabla \phi  \, \d x \right|
+ \left| \int_{\Omega} \nabla A(\phi) \cdot
\frac{a'(\phi)}{2\sqrt{a(\phi)}} \nabla \phi \,  ( \phi-\overline{\phi})  \, \d x \right|
\\
& \leq C \| \nabla \mu\|_{L^2(\Omega)} \| \phi\|_{L^2(\Omega)}
+ C \| \phi -\overline{\phi}\|_{L^2(\Omega)}^2
\\
&\quad
+ \left\| \sqrt{a(\phi)} \right\|_{L^\infty(\Omega)}
\| \nabla A(\phi)\|_{L^2(\Omega)} \| \nabla \phi\|_{L^2(\Omega)}
\\
&\quad +C \left\| \frac{a'(\phi)}{2\sqrt{a(\phi)}} \right\|_{L^\infty(\Omega)} \| \phi-\overline{\phi}\|_{L^\infty(\Omega)}
\| \nabla A(\phi)\|_{L^2(\Omega)} \| \nabla \phi\|_{L^2(\Omega)}.
\end{align*}
Thanks to \eqref{Linf-u-phi_1} and \eqref{A-H1}, we obtain
\begin{equation}
\label{11}
\left| \int_{\Omega} F'(\phi) \left( \phi-\overline{\phi} \right) \, \d x \right|
\leq C \| \nabla \mu\|_{L^2(\Omega)} + C.
\end{equation}
Therefore, by \eqref{MZ} and \eqref{11}, we deduce that
\begin{equation}
\label{est-F'}
\| F'(\phi)\|_{L^1(\Omega)} \leq C  \| \nabla \mu\|_{L^2(\Omega)} + C.
\end{equation}
Now, due to the strict positivity of $a(\cdot)$, from \eqref{mu-equiv} we have
$$
\frac{1}{\sqrt{a(\phi)}} \mu = - \Delta A(\phi) + \frac{1}{\sqrt{a(\phi)}} \Psi'(\phi) \quad \text{a.e. in } \Omega \times (0,\infty).
$$
Integrating over $\Omega$ and exploiting the boundary condition on $\phi$, we find
\begin{equation*}
\int_\Omega \frac{1}{\sqrt{a(\phi)}} \mu \, \d x =
\int_\Omega \frac{1}{\sqrt{a(\phi)}} \Psi'(\phi) \, \d x,
\end{equation*}
which immediately gives us
$$
\left| \int_\Omega \frac{1}{\sqrt{a(\phi)}} \mu \, \d x \right|
\leq \int_\Omega \frac{1}{\sqrt{a(\phi)}} |\Psi'(\phi)| \, \d x
\leq \frac{1}{\sqrt{a_m}} \int_\Omega |\Psi'(\phi)| \, \d x.
$$
By \eqref{Linf-u-phi_1} and \eqref{est-F'}, we get
$$
\left| \int_\Omega \frac{1}{\sqrt{a(\phi)}} \mu \, \d x \right| \leq C \| \nabla \mu\|_{L^2(\Omega)} + C.
$$
Therefore, thanks to \eqref{normH1-2}, we deduce that
\begin{equation}
\label{est-mu-H1}
\| \mu\|_{H^1(\Omega)}\leq C \| \nabla \mu\|_{L^2(\Omega)} + C,
\end{equation}
and, we conclude from \eqref{L2-u-mu_1} that
\begin{equation}
\label{mu-H1}
\| \mu \|_{L^2_{\rm uloc}([0,\infty); H^1(\Omega))}\leq C.
\end{equation}
\smallskip

\textbf{$H^2$-estimate of the concentration.} First, multiplying \eqref{mu-equiv} by $-\Delta A(\phi)$, integrating over $\Omega$ and exploiting the boundary conditions \eqref{CH-bc}, we have
\begin{align*}
\int_\Omega \sqrt{a(\phi)} |\Delta A(\phi)|^2 \, \d x
- \int_\Omega  F'(\phi) \Delta A(\phi) \, \d x
= \int_\Omega \nabla \mu \cdot \nabla A(\phi) \, \d x
+ \theta_0 \int_\Omega \nabla \phi \cdot \nabla A(\phi) \, \d x.
\end{align*}
Setting $\phi_k = h_k \circ \phi$, where $h_k$ is defined in \eqref{trunc}, we rewrite the second term on the left-hand side as
\begin{align*}
- \int_\Omega  F'(\phi) \Delta A(\phi) \, \d x
&= - \int_\Omega  F'(\phi_k) \Delta A(\phi) \, \d x
+ \int_\Omega  \left( F'(\phi_k) - F'(\phi) \right) \Delta A(\phi) \, \d x
\\
&= \int_\Omega  \sqrt{a(\phi)} F''(\phi_k) \nabla \phi_k \cdot \nabla \phi  \, \d x
+ \int_\Omega  \left( F'(\phi_k) - F'(\phi) \right) \Delta A(\phi) \, \d x.
\end{align*}
Now, we observe that the first term is non-negative, whereas the second one converges to $0$ as $k \to \infty$ since $F'(\phi) \in L^2(0,T; L^2(\Omega))$ and $A(\phi) \in L^2(0,T;H^2(\Omega))$, for any $T>0$. Then, passing to the limit as $k \to \infty$, we conclude that the term $- \int_\Omega  F'(\phi) \Delta A(\phi) \, \d x$ is non- negative. Thus, recalling that $a(\cdot) \geq a_m>0$ in $[-1,1]$, and exploiting \eqref{Linf-u-phi_1} and \eqref{A-H1}, we obtain
\begin{equation*}
\int_\Omega  |\Delta A(\phi)|^2 \, \d x \leq C \| \nabla \mu\|_{L^2(\Omega)} + C.
\end{equation*}
Since $\partial_\n A(\phi)= \sqrt{a(\phi)} \partial_\n \phi=0$ on $\partial \Omega \times (0,\infty)$, we immediately infer that
\begin{equation}
\label{est-A-H2}
\| A(\phi)\|_{H^2(\Omega)}^2 \leq C \| \nabla \mu\|_{L^2(\Omega)} + C,
\end{equation}
which entails that
\begin{equation}
\label{bound-A-H2}
A(\phi) \in L_{\rm uloc}^4([0,\infty); H^2(\Omega)).
\end{equation}
Therefore, by exploiting \eqref{phi-Aphi-rel} and using the interpolation inequality \eqref{inter-W14}, we deduce that
\begin{align*}
\| \Delta \phi\|_{L^2(\Omega)}
&\leq \norm{\frac{1}{\sqrt{a(\phi)}}}_{L^\infty(\Omega)}
 \| \Delta A(\phi)\|_{L^2(\Omega)}
+ \norm{ \frac{a'(\phi)}{2 a^2(\phi)} }_{L^\infty(\Omega)}
\| \nabla A(\phi)\|_{L^4(\Omega)}^2
\notag
\\
&\leq \frac{1}{\sqrt{a_m}}
 \| \Delta A(\phi)\|_{L^2(\Omega)}
+ \frac{C}{2 a^2_m}
\| A(\phi)\|_{L^\infty(\Omega)} \| A(\phi)\|_{H^2(\Omega)}
\notag
\\
& \leq C \|  A(\phi)\|_{H^2(\Omega)},
\end{align*}
which, also thanks to \eqref{est-A-H2}, implies that
\begin{equation}
\label{est-phi-H2}
\| \Delta \phi\|_{L^2(\Omega)}^2 \leq C \| \nabla \mu\|_{L^2(\Omega)} + C.
\end{equation}
Finally, we conclude that
\begin{equation}
\label{bound-phi-H2}
\phi \in L_{\rm uloc}^4([0,\infty); H^2(\Omega)).
\end{equation}

\medskip

\textbf{$L^p$-estimate of the potential.} In light of \eqref{a-ndeg} and \eqref{mu-equiv}, we consider the elliptic system 
\begin{align*}
\label{Aphi-ELL-2}
\begin{cases}
- \sqrt{a(\phi)}\Delta A(\phi)+ F'(\phi) = f \quad &\text{a.e. in }\Omega 
\times (0,\infty), \\
\partial_{\n} A(\phi)=0\quad &\text{a.e. on }\partial \Omega \times (0,\infty),
\end{cases}
\end{align*}
where
$$
f=\mu+\theta_0 \phi \in L_{\rm uloc}^2([0,\infty); H^1(\Omega)).
$$
By the theory developed in Section \ref{s-stationary}, in particular Lemma \ref{ell2}, we deduce that
\begin{equation}
\label{F'-LP}
\| F'(\phi)\|_{L^p(\Omega)}\leq C \| \mu\|_{L^p(\Omega)} + C \| \phi\|_{L^p(\Omega)},
\end{equation}
 for any $2 \leq p < \infty$ if $d=2$, and $p=6$ if $d=3$, which entails that
 \begin{equation}
\label{bound-F'-Lp}
F'(\phi) \in L_{\rm uloc}^2([0,\infty); L^p(\Omega)), 
\end{equation}
 for any $2 \leq p < \infty$ if $d=2$, and $p=6$ if $d=3$.

\medskip

\textbf{$W^{2,p}$-estimate of the concentration.} By \eqref{mu-equiv}, we observe that
\begin{equation}
\label{Aphi-ELL}
\begin{cases}
-\Delta A(\phi)+ A(\phi) = f \quad &\text{a.e. in }\Omega 
\times (0,\infty), \\
\partial_{\n} A(\phi)=0\quad &\text{a.e. on }\partial \Omega \times (0,\infty),
\end{cases}
\end{equation}
where
$$
f=\frac{1}{\sqrt{a(\phi)}} \left( \mu- \Psi'(\phi) \right) + A(\phi).
$$
Since the elliptic regularity theory entails that
\begin{equation}
\label{A2p}
\| A(\phi)\|_{W^{2,p}(\Omega)}\leq C_p \| f\|_{L^p(\Omega)},
\end{equation}
and observing that $f \in L_{\rm uloc}^2([0,\infty); L^p(\Omega))$ 
for any $2 \leq p < \infty$ if $d=2$, and $p=6$ if $d=3$,
we deduce that 
$$
A(\phi) \in L_{\rm uloc}^2([0,\infty); W^{2,p}(\Omega)), 
$$
for any $2 \leq p < \infty$ if $d=2$, and $p=6$ if $d=3$.
In addition, by \eqref{GN-utile} and  \eqref{phi-Aphi-rel}, we also infer that
\begin{align}
\| \Delta \phi\|_{L^p(\Omega)}
&
\leq
\left\| \frac{1}{\sqrt{a(\phi)}} \right\|_{L^\infty(\Omega)}
\| \Delta A(\phi) \|_{L^p(\Omega)}
+ \left\| \frac{a'(\phi)}{2 a^2(\phi)} \right\|_{L^\infty(\Omega)}
\| \nabla A(\phi)\|_{L^{2p}(\Omega)}^2
\notag
\\
&\leq
C \| A(\phi)\|_{W^{2,p}(\Omega)} + C \|A(\phi)\|_{L^\infty(\Omega)}  \|A(\phi)\|_{W^{2,p}(\Omega)}
\notag
\\
&\leq C \| A(\phi)\|_{W^{2,p}(\Omega)}.
\label{phi-W2p}
\end{align}
Thus, we conclude that
\begin{equation*}
\phi \in L_{\rm uloc}^2([0,\infty); W^{2,p}(\Omega)), 
\end{equation*}
for any $2 \leq p < \infty$ if $d=2$, and $p=6$ if $d=3$.

\section{Uniqueness of weak solutions in two dimensions}
\label{W-uniq}

We consider two weak solutions $\phi_1$ and $\phi_2$  to problem \eqref{CH1}-\eqref{CH-ic} originating from two initial data $\phi_{1}^0$ and $\phi_2^0$ such that $\overline{\phi_{1}^0}=\overline{\phi_{2}^0}$.
Following the notation in \cite{CGGG2025}, we notice that \eqref{e2} and \eqref{e3} entail that
\begin{equation}
\label{mu-G}
\mu_i-\overline{\mu_i}= - \mathcal{G}_{\phi_i} \partial_t \phi_i, \quad
\text{ a.e. in } \Omega\times (0,\infty), \ i=1,2,
\end{equation}
where $\mu_1$ and $\mu_2$ are the chemical potentials corresponding to $\phi_1$ and $\phi_2$, respectively.
Setting $\Phi=\phi_1-\phi_2$, we find from \eqref{CH2} that
\begin{align*}
&-\div \left( a(\phi_1) \nabla \Phi \right)
-\div \left( (a(\phi_1)-a(\phi_2)) \nabla \phi_2 \right)
+\frac{a'(\phi_1)}{2} \left( |\nabla \phi_1|^2 - |\nabla \phi_2|^2 \right)
\\
&\quad
+ \frac12 \left( a'(\phi_1)-a'(\phi_2)\right) | \nabla \phi_2|^2
+ \Psi'(\phi_1)-\Psi'(\phi_2)= \mu_1-\mu_2,  \quad
\text{ a.e. in } \Omega\times (0,\infty).
\end{align*}
Multiplying the above equation by $\Phi$ and integrating over $\Omega$, we get
 \begin{align*}
& \int_\Omega a(\phi_1) |\nabla \Phi |^2 \, \d x
 +
  \int_\Omega \left( F'(\phi_1)-F'(\phi_2) \right)   \Phi \, \d x
- \int_{\Omega} \left( \mu_1-\mu_2\right)  \Phi \, \d x
\\
&=
 \theta_0 \norm{\Phi}_{L^2(\Omega)}^2
 - \int_\Omega \left( a(\phi_1)-a(\phi_2)\right) \nabla \phi_2 \cdot \nabla \Phi \, \d x
 \\
 &\quad -\int_\Omega \frac{a'(\phi_1)}{2}  \left( |\nabla \phi_1|^2-|\nabla \phi_2|^2\right) \Phi
 \, \d x
 - \int_\Omega \frac12  \left( a'(\phi_1)-a'(\phi_2)\right) | \nabla \phi_2|^2 \Phi \, \d x.
\end{align*}
In light of \eqref{mu-G}, since $\overline{\Phi}\equiv 0$ by the conservation of mass, the third term on the left-hand side is
\begin{align*}
- \int_{\Omega} \left( \mu_1-\mu_2\right)  \Phi \, \d x
&=
 \int_\Omega \left( \G_{\phi_1}  \partial_t \phi_1 - \G_{\phi_2} \partial_t \phi_2  \right)  \Phi \, \d x
 \\
 &=
\int_\Omega  \G_{\phi_1} \partial_t \Phi  \, \Phi \, \d x
+
\int_\Omega \left( \G_{\phi_1}-\G_{\phi_2} \right)
  \partial_t \phi_2 \,  \Phi \, \d x.
\end{align*}
We know from \cite[eq. (3.16)]{CGGG2025} that the following integration-by-parts  formula holds in the class of weak solutions
\begin{align}
\int_\Omega \G_{\phi_1} \partial_t \Phi \,  \Phi \, \d x
&= \ddt \frac{1}{2} \norm{\sqrt{b(\phi_1)}\nabla \G_{\phi_1} \Phi}_{L^2(\Omega)}^2
+\frac12 \int_\Omega \nabla \G \partial_t \phi_1 \cdot b''(\phi_1) \nabla \phi_1 \left| \nabla \G_{\phi_1} \Phi \right|^2 \, \d x \notag
\\[5pt]
&\quad +  \int_\Omega \nabla \G \partial_t \phi_1 \cdot  b'(\phi_1) \left( D^2 \G_{\phi_1} \Phi \nabla \G_{\phi_1} \Phi \right) \, \d x,
\label{IP-time}
\end{align}
almost everywhere in $(0,\infty)$, where $D^2 f$ is the Hessian of $f$.
On the other hand, we have
\begin{align}
\label{GdG}
\int_\Omega \left( \G_{\phi_1}-\G_{\phi_2} \right)
  \partial_t \phi_2 \,  \Phi \, \d x
&=
  \l
  \partial_t \phi_2,  \left( \G_{\phi_1}-\G_{\phi_2} \right)  \Phi \r\notag
\\
 & =
-
  \int_\Omega b(\phi_2)\nabla \mu_2 \cdot \nabla \left( \G_{\phi_1}-\G_{\phi_2} \right)  \Phi \, \d x\notag
\\
&=
- \int_\Omega (b(\phi_2)-b(\phi_1)) \nabla \mu_2 \cdot \nabla \G_{\phi_1} \Phi \, \d x.
\end{align}
Thus, we deduce that
\begin{align*}
&- \int_{\Omega} \left( \mu_1-\mu_2\right)  \Phi \, \d x
\\
&
=
\ddt \frac{1}{2} \norm{\sqrt{b(\phi_1)}\nabla \G_{\phi_1} \Phi}_{L^2(\Omega)}^2
+\frac12 \int_\Omega \nabla \G \partial_t \phi_1 \cdot b''(\phi_1) \nabla \phi_1 \left| \nabla \G_{\phi_1} \Phi \right|^2 \, \d x \notag
\\[5pt]
&\quad +  \int_\Omega \nabla \G \partial_t \phi_1 \cdot  b'(\phi_1) \left( D^2 \G_{\phi_1} \Phi \nabla \G_{\phi_1} \Phi \right) \, \d x  - \int_\Omega (b(\phi_2)-b(\phi_1)) \nabla \mu_2 \cdot \nabla \G_{\phi_1} \Phi \, \d x.
\end{align*}
Therefore, we end up with the differential equality
\begin{align}
&\ddt \frac{1}{2} \norm{\sqrt{b(\phi_1)}\nabla \G_{\phi_1} \Phi}_{L^2(\Omega)}^2
+ \int_\Omega a(\phi_1) |\nabla \Phi |^2 \, \d x
 +
  \int_\Omega \left( F'(\phi_1)-F'(\phi_2) \right)   \Phi \, \d x
  \notag
\\
&=   \theta_0 \norm{\Phi}_{L^2(\Omega)}^2
 - \int_\Omega \left( a(\phi_1)-a(\phi_2)\right) \nabla \phi_2 \cdot \nabla \Phi \, \d x
 \notag
 \\
 &\quad -\int_\Omega \frac{a'(\phi_1)}{2}  \left( |\nabla \phi_1|^2-|\nabla \phi_2|^2\right) \Phi
 \, \d x
 - \int_\Omega \frac12  \left( a'(\phi_1)-a'(\phi_2)\right) | \nabla \phi_2|^2 \Phi \, \d x
 \notag
 \\
 &\quad
 -\frac12 \int_\Omega \nabla \G \partial_t \phi_1 \cdot b''(\phi_1) \nabla \phi_1 \left| \nabla \G_{\phi_1} \Phi \right|^2 \, \d x
 -
  \int_\Omega \nabla \G \partial_t \phi_1 \cdot  b'(\phi_1) \left( D^2 \G_{\phi_1} \Phi \nabla \G_{\phi_1} \Phi \right) \, \d x
  \notag
  \\
  &\quad  + \int_\Omega (b(\phi_2)-b(\phi_1)) \nabla \mu_2 \cdot \nabla \G_{\phi_1} \Phi \, \d x.
  \label{diff-rel-1}
 \end{align}
Let us now proceed by estimating the terms on the right-hand side.
First, the first term on the right-hand side is simply estimated by
\begin{equation*}
\label{I_4}
\theta_0 \norm{\Phi}_{L^2(\Omega)}^2 \leq
\frac{a_m}{16} \norm{\nabla \Phi}_{L^2(\Omega)}^2
 +C
\norm{ \Phi }_{H^{-1}_{(0)}(\Omega)}^2\!,
\end{equation*}
The terms containing the nonlinear diffusion $a$ can be controlled as follows.
By \eqref{a-ndeg}, \eqref{lady_2} and the Hilbert interpolation, we obtain 
\begin{align*}
\left|  - \int_\Omega \left( a(\phi_1)-a(\phi_2)\right) \nabla \phi_2 \cdot \nabla \Phi \, \d x \right|
&\leq \| a(\phi_1)-a(\phi_2)\|_{L^4(\Omega)} \| \nabla \phi_2\|_{L^4(\Omega)}
\| \nabla \Phi\|_{L^2(\Omega)}
\\
&\leq C \| \Phi\|_{L^4(\Omega)} \| \nabla \phi_2\|_{L^4(\Omega)}
\| \nabla \Phi\|_{L^2(\Omega)}
\\
&\leq C \| \Phi\|_{H^{-1}_{(0)}(\Omega)}^\frac14
 \| \nabla \phi_2\|_{L^2(\Omega)}^\frac12  \| \phi_2\|_{H^2(\Omega)}^\frac12
\| \nabla \Phi\|_{L^2(\Omega)}^\frac74
\\
& \leq \frac{a_m}{16}\| \nabla \Phi\|_{L^2(\Omega)}^2
+C  \| \phi_2\|_{H^2(\Omega)}^4  \| \Phi\|_{H^{-1}_{(0)}(\Omega)}^2.
\end{align*}
Similarly, we have
\begin{align*}
&\left| -\int_\Omega \frac{a'(\phi_1)}{2}  \left( |\nabla \phi_1|^2-|\nabla \phi_2|^2\right)\Phi
 \, \d x\right|
 \\
\quad &\leq  \left\| \frac{a'(\phi_1)}{2}\right\|_{L^\infty(\Omega)} \left( \| \nabla \phi_1\|_{L^4(\Omega)}+ \| \nabla \phi_2\|_{L^4(\Omega)} \right) \| \nabla \Phi\|_{L^2(\Omega)} \| \Phi\|_{L^4(\Omega)}
\\
\quad &\leq C(\|\phi_1\|_{H^2(\Omega)}^{\frac12}+\|\phi_2\|_{H^2(\Omega)}^{\frac12})\|\Phi\|_{H^{-1}_{(0)}(\Omega)}^{\frac14}\|\nabla \Phi\|^{\frac74}_{L^2(\Omega)}
\\
&\leq \frac{a_m}{16}\|\nabla \Phi\|^{2}_{L^2(\Omega)}+C(\|\phi_1\|_{H^2(\Omega)}^4+\|\phi_2\|_{H^2(\Omega)}^{4})\|\Phi\|_{H^{-1}_{(0)}(\Omega)}^{2},
 \end{align*}
and
 \begin{align*}
\left| - \int_\Omega \frac12  \left( a'(\phi_1)-a'(\phi_2)\right) | \nabla \phi_2|^2 \Phi \, \d x\right|
&\leq C \| a'(\phi_1)-a'(\phi_2)\|_{L^4(\Omega)}
\|\nabla\phi_2\|_{L^4(\Omega)}^2\|\Phi\|_{L^4(\Omega)}
\\
&\leq C\|\phi_2\|_{H^2(\Omega)}
\| \Phi\|_{L^4(\Omega)}^2
\\
&\leq C\|\phi_2\|_{H^2(\Omega)}
\|\Phi\|_{H^{-1}_{(0)}(\Omega)}^{\frac12}
\|\nabla \Phi\|^{\frac32}
\\
&\leq \frac{a_m}{16}\|\nabla \Phi\|^{2}_{L^2(\Omega)}+C\|\phi_2\|_{H^2(\Omega)}^4\|\Phi\|_{H^{-1}_{(0)}(\Omega)}^{2}.
\end{align*}
The remaining terms
\begin{align}
\label{I1}
I_1 &=- \frac12 \int_\Omega \nabla \G \partial_t \phi_1 \cdot b''(\phi_1) \nabla \phi_1 \left| \nabla \G_{\phi_1} \Phi \right|^2 \, \d x,
\\
\label{I2}
I_2 &=-  \int_\Omega \nabla \G \partial_t \phi_1 \cdot  b'(\phi_1) \left( D^2 \G_{\phi_1} \Phi \nabla \G_{\phi_1} \Phi \right) \, \d x,
\\
\label{I3}
I_3 &= \int_\Omega (b(\phi_1)-b(\phi_2)) \nabla \mu_2 \cdot \nabla \G_{\phi_1} \Phi \, \d x
\end{align}
 require a careful control based on higher order estimates for the operator $\mathcal{G}_{\phi_1} \Phi$. This has been done in \cite[Section 3]{CGGG2025}  based on suitable controls of
$\| \mathcal{G}_{\phi_1} \Phi\|_{H^2(\Omega)}$ and of $\| \mathcal{G}_{\phi_1} \Phi\|_{W^{2,4}(\Omega)}$ in terms of $\|\phi_1\|_{H^2(\Omega)}$, $\|\nabla \Phi\|_{L^2(\Omega)}$ and $\norm{\nabla \G_{\phi_1} \Phi}_{L^2(\Omega)}$ (that is equivalent to $\|\Phi\|_{H^{-1}_{(0)}}$), coming from the elliptic regularity theory.
Based on \cite[Eqn. (3.46)]{CGGG2025}, there holds
\begin{equation*}
\label{H2-Phi}
\| \mathcal{G}_{\phi_1} \Phi\|_{H^2(\Omega)}
\leq C \left( \| \nabla \phi_1\|_{L^2(\Omega)}
 \| \phi_1\|_{H^2(\Omega)}
 \| \nabla \mathcal{G}_{\phi_1} \Phi\|_{L^2(\Omega)}+
 \| \nabla \mathcal{G}_{\phi_1} \Phi \|_{L^2(\Omega)}^\frac12
 \| \nabla \Phi\|_{L^2(\Omega)}^\frac12\right)\!.
\end{equation*}
Thus, we have
\begin{align*}
|I_1|&\leq C  \norm{\partial_t \phi_1}_{ H^{-1}_{(0)}(\Omega)}
\norm{ b''(\phi_1)  \nabla \phi_1 |\nabla \G_{\phi_1} \Phi |^2}_{L^2(\Omega)}\!
\\
&\leq
C \norm{\partial_t \phi_1}_{H^{-1}_{(0)}(\Omega)}
\norm{\nabla \phi_1}_{L^6(\Omega)}
\norm{\nabla \G_{\phi_1} \Phi}_{L^6(\Omega)}^2
\notag
\\
&\leq
C \norm{\partial_t \phi_1}_{H^{-1}_{(0)}(\Omega)}
\norm{\nabla \phi_1}_{L^2(\Omega)}^\frac13 \norm{\phi_1}_{H^2(\Omega)}^\frac23
\norm{\nabla \G_{\phi_1} \Phi}_{L^2(\Omega)}^\frac23
\norm{\G_{\phi_1} \Phi}_{H^2(\Omega)}^\frac43
\notag
\\
&\leq C \norm{\partial_t \phi_1}_{H^{-1}_{(0)}(\Omega)}
 \norm{\phi_1}_{H^2(\Omega)}^\frac23
\norm{\nabla \G_{\phi_1} \Phi}_{L^2(\Omega)}^\frac23
\notag
\\
&\quad \times
\left( \| \phi_1\|_{H^2(\Omega)} \norm{\nabla \G_{\phi_1} \Phi}_{L^2(\Omega)} + \norm{\nabla \G_{\phi_1} \Phi}_{L^2(\Omega)}^\frac12 \| \nabla \Phi\|_{L^2(\Omega)}^\frac12 \right)^\frac43
\notag
\\
& \leq
C \norm{\partial_t \phi_1}_{H^{-1}_{(0)}(\Omega)}
 \norm{\phi_1}_{H^2(\Omega)}^2
\norm{\nabla \G_{\phi_1} \Phi}_{L^2(\Omega)}^2
\notag
\\
&\quad
+
C \norm{\partial_t \phi_1}_{H^{-1}_{(0)}(\Omega)}
 \norm{\phi_1}_{H^2(\Omega)}^\frac23
\norm{\nabla \G_{\phi_1} \Phi}_{L^2(\Omega)}^\frac43
 \| \nabla \Phi\|_{L^2(\Omega)}^\frac23
 \notag
 \\
& \leq \frac{a_m}{16} \| \nabla \Phi\|_{L^2(\Omega)}^2
 + C  \left( \norm{\partial_t \phi_1}_{H^{-1}_{(0)}(\Omega)}
 \norm{\phi_1}_{H^2(\Omega)}^2 + \norm{\partial_t \phi_1}_{H^{-1}_{(0)}(\Omega)}^\frac32
 \norm{\phi_1}_{H^2(\Omega)}\right) \norm{\nabla \G_{\phi_1} \Phi}_{L^2(\Omega)}^2
 \notag
 \\
 \label{I1-f}
& \leq \frac{a_m}{16} \| \nabla \Phi\|_{L^2(\Omega)}^2
 + C \left( \norm{\partial_t \phi_1}_{H^{-1}_{(0)}(\Omega)}^2+
 \norm{\phi_1}_{H^2(\Omega)}^4 \right)\|\Phi\|_{H^{-1}_{(0)}(\Omega)}^{2}\!.
\end{align*}
Concerning $I_2$ and $I_3$, we report the final results from \cite[Eqns. (3.50) and (3.51)]{CGGG2025} 
\begin{equation*}
\label{I2-f}
|I_2| \leq \frac{a_m}{16}  \norm{\nabla \Phi}_{L^2(\Omega)}^2
 + C \left( \norm{\partial_t \phi_1}_{ H^{-1}_{(0)}(\Omega)}^2 +
 \| \phi_1\|_{H^2(\Omega)}^4\right)
\|\Phi\|_{H^{-1}_{(0)}(\Omega)}^{2}\!,
\end{equation*}
and
\begin{equation*}
\label{I3-f}
|I_3| \leq \frac{a_m}{16}  \| \nabla \Phi\|_{L^2(\Omega)}^2
 + C  \left( \norm{\nabla \mu_2}_{L^2(\Omega)}^2+
 \| \phi_1\|_{H^2(\Omega)}^4
   \right) \|\Phi\|_{H^{-1}_{(0)}(\Omega)}^{2}\!.
\end{equation*}
We now collect all the estimates above in \eqref{diff-rel-1}.  Observing that $(F'(\phi_1)-F'(\phi_2), \phi_1-\phi_2) \geq 0$  and that $a(\phi_1)\geq a_m>0$, recalling that
$\|\Phi\|_{H^{-1}_{(0)}(\Omega)}$ is equivalent to $\norm{\sqrt{b(\phi_1)} \nabla \G_{\phi_1} \Phi}_{L^2(\Omega)}$, we arrive at the following differential inequality
\begin{equation}
\label{diff-rel-2}
\ddt  \norm{\sqrt{b(\phi_1)}\nabla \G_{\phi_1} \Phi}_{L^2(\Omega)}^2
+ a_m\|\nabla \Phi \|_{L^2(\Omega)}^2
\leq h(t) \norm{ \sqrt{b(\phi_1)} \nabla \G_{\phi_1} \Phi}_{L^2(\Omega)}^2
\! ,
\end{equation}
where
$$
h(\cdot)= C \Big( 1+
\norm{\nabla \mu_2}_{L^2(\Omega)}^2
+ \norm{\partial_t \phi_1}_{ H^{-1}_{(0)}(\Omega)}^2 +
 \| \phi_1\|_{H^2(\Omega)}^4+
 \| \phi_2\|_{H^2(\Omega)}^4  \Big) \in L^1(0,T), \quad \forall \, T>0.
$$
A straightforward application of the Gronwall lemma entails that
\begin{equation}
\norm{\sqrt{b(\phi_1)} \nabla \G_{\phi_1}\left( \phi_1(t)-\phi_2(t) \right)}_{L^2(\Omega)}^2
\leq \norm{\sqrt{b(\phi_1)} \nabla \G_{\phi_1} \left( \phi_1^0-\phi_2^0 \right)}_{L^2(\Omega)}^2
\mathrm{e}^{\int_0^t h(s)\, \d s}, \quad \forall \, t \geq 0,
\end{equation}
which implies \eqref{UNIQ} by the equivalence of the norms in $H^{-1}_{(0)}(\Omega)$. Thus, if $\phi_1^0=\phi_2^0$, the  uniqueness of the weak solutions immediately follows. The proof of part (3) of Theorem \ref{well-pos-2D} is done.

\section{Global Regularity of weak solutions in two dimensions}
\label{S-Strong}

This section is devoted to the propagation of regularity for any global weak solution in two dimensions, namely we prove part (3) of Theorem \ref{well-pos-2D}. Furthermore, relying on the achieved regularity for positive times, we show the validity of the energy equality claimed in part (4) of Theorem \ref{well-pos-2D}.

\subsection{Global regularity}
Below, we carry out formal estimates, which can be easily performed in the approximation scheme exploited in \cite{SP2013}. Multiplying \eqref{CH1} by $\partial_t \mu$ and integrating over $\Omega$, we find
\begin{equation}\label{counts}
\frac{1}{2}\ddt  \int_{\Omega} b(\phi) \left|\nabla \mu \right|^2 \, \d x  - \frac{1}{2} \int_{\Omega} b'(\phi) \partial_t \phi  |\nabla \mu|^2  \, \d x
+ \int_\Omega \partial_t \phi \, \partial_t \mu \, \d x =0.
\end{equation}
By exploiting \eqref{mu-equiv}, we observe that
\begin{align*}
\int_\Omega \partial_t \phi \, \partial_t \mu \, \d x
&=
\int_\Omega \partial_t \left( -\sqrt{a(\phi)} \Delta A(\phi)\right) \partial_t \phi \, \d x
+ \int_\Omega F''(\phi)|\partial_t \phi|^2 \, \d x - \theta_0 \| \partial_t \phi\|_{L^2(\Omega)}^2
\\
&= - \int_\Omega \frac{a'(\phi)}{2\sqrt{a(\phi)}} \Delta A(\phi) |\partial_t \phi|^2 \, \d x
- \int_\Omega \sqrt{a(\phi)} \Delta \partial_t A(\phi) \, \partial_t \phi \, \d x
\\
&\quad + \int_\Omega F''(\phi)|\partial_t \phi|^2 \, \d x - \theta_0 \| \partial_t \phi\|_{L^2(\Omega)}^2
\\
&=- \int_\Omega \frac{a'(\phi)}{2\sqrt{a(\phi)}} \Delta A(\phi) |\partial_t \phi|^2 \, \d x
+ \int_\Omega  \nabla \partial_t A(\phi) \cdot \nabla \left( \sqrt{a(\phi)}\partial_t \phi \right) \, \d x
\\
&\quad + \int_\Omega F''(\phi)|\partial_t \phi|^2 \, \d x - \theta_0 \| \partial_t \phi\|_{L^2(\Omega)}^2
\\
&=- \int_\Omega \frac{a'(\phi)}{2\sqrt{a(\phi)}} \Delta A(\phi) |\partial_t \phi|^2 \, \d x
+ \int_\Omega  \Big| \nabla \left( \sqrt{a(\phi)}\partial_t \phi \right) \Big|^2 \, \d x
\\
&\quad + \int_\Omega F''(\phi)|\partial_t \phi|^2 \, \d x - \theta_0 \| \partial_t \phi\|_{L^2(\Omega)}^2.
\end{align*}
By a direct calculation, we notice that
\begin{align*}
 &\int_\Omega  \Big| \nabla \left( \sqrt{a(\phi)}\partial_t \phi \right) \Big|^2 \, \d x
 \\
 & = \int_\Omega \left( \frac{a'(\phi)}{2\sqrt{a(\phi)}}  \nabla \phi \, \partial_t \phi +
 \sqrt{a(\phi)} \nabla \partial_t \phi\right) \cdot  \left( \frac{a'(\phi)}{2\sqrt{a(\phi)}}  \nabla \phi \, \partial_t \phi +
 \sqrt{a(\phi)} \nabla \partial_t \phi \right)  \, \d x
 \\
 & = \int_\Omega a(\phi) |\nabla \partial_t \phi|^2 \, \d x +
 \int_\Omega \frac{(a'(\phi))^2}{4 a(\phi)} |\nabla \phi|^2 |\partial_t \phi|^2 \, \d x
 + \int_\Omega a'(\phi) \nabla \phi \cdot \nabla \partial_t \phi \, \partial_t \phi \, \d x.
\end{align*}
Collecting the above calculations all together, we end up with
\begin{align}
\label{first-rel}
&\frac{1}{2} \ddt  \int_{\Omega} b(\phi) \left|\nabla \mu \right|^2 \, \d x
+\int_\Omega a(\phi) |\nabla \partial_t \phi|^2 \, \d x
\notag \\
&\quad +
 \int_\Omega \frac{(a'(\phi))^2}{4 a(\phi)} |\nabla \phi|^2 |\partial_t \phi|^2 \, \d x + \int_\Omega F''(\phi)|\partial_t \phi|^2 \, \d x
\notag\\
&= \theta_0 \| \partial_t \phi\|_{L^2(\Omega)}^2
+ \frac{1}{2}  \int_{\Omega} b'(\phi) \partial_t \phi  |\nabla \mu|^2 \, \d x
+ \int_\Omega \frac{a'(\phi)}{2\sqrt{a(\phi)}} \Delta A(\phi) |\partial_t \phi|^2 \, \d x
\notag\\
&\quad
-  \int_\Omega a'(\phi) \nabla \phi \cdot \nabla \partial_t \phi \, \partial_t \phi \, \d x.
\end{align}

We proceed with the control of the nonlinear terms.
First, by interpolation in Hilbert spaces and \eqref{est-phit-H-1}, we know that
$$
\theta_0 \|\partial_t \phi \|_{L^2(\Omega)}^2 \leq \frac{a_m}{4} \| \nabla \partial_t \phi\|_{L^2(\Omega)}^2 +
C \| \nabla \mu\|_{L^2(\Omega)}^2.
$$
Reasoning as in \cite{CGGG2025} (see, in particular, Eqns. (4.26)-(4.27) and the subsequent estimates therein), we have
\begin{equation}\label{third_method}
\begin{split}
\left| \int_{\Omega} b'(\phi )\partial_t \phi |\nabla \mu |^2 \, \d x\right|
&\leq \frac{a_m}{4} \norm{\nabla \partial_t \phi}_{L^2(\Omega)}^2 +
 C\left(  \| \phi\|_{H^2(\Omega)}^4 + \| \nabla \mu\|_{L^2(\Omega)}^2
 \right) \| \nabla \mu\|_{L^2(\Omega)}^2.
\end{split}
\end{equation}
Next, we consider
\begin{align*}
\left| \int_\Omega \frac{a'(\phi)}{2\sqrt{a(\phi)}} \Delta A(\phi) |\partial_t \phi|^2 \, \d x \right|
&
\leq \left\| \frac{a'(\phi)}{2\sqrt{a(\phi)}} \right\|_{L^\infty(\Omega)} \| \Delta A(\phi)\|_{L^2(\Omega)} \| \partial_t \phi\|_{L^4(\Omega)}^2
\\
&\leq
C \| \Delta A(\phi)\|_{L^2(\Omega)}  \norm{\nabla \partial_t \phi}_{L^2(\Omega)}^\frac32
\| \partial_t \phi\|_{H^{-1}_{(0)}(\Omega)}^\frac12
\\
&\leq
\frac{a_m}{8} \norm{\nabla \partial_t \phi}_{L^2(\Omega)}^2 +
C \| A(\phi)\|_{H^2(\Omega)}^4  \| \nabla \mu\|_{L^2(\Omega)}^2.
\end{align*}
Similarly, by \eqref{lady_2} and \eqref{Linf-u-phi_1}, we get
\begin{align*}
\left|  \int_\Omega a'(\phi) \nabla \phi \cdot \nabla \partial_t \phi \, \partial_t \phi \, \d x  \right|
&\leq
\| a'(\phi)\|_{L^\infty(\Omega)} \norm{\nabla \partial_t \phi}_{L^2(\Omega)} \| \nabla \phi\|_{L^4(\Omega)}
\| \partial_t \phi\|_{L^4(\Omega)}
\\
&\leq
C   \norm{\nabla \partial_t \phi}_{L^2(\Omega)}^\frac74
\| \phi\|_{H^2(\Omega)}^\frac12
\| \partial_t \phi\|_{H^{-1}_{(0)}(\Omega)}^\frac14
\\
&
\leq
\frac{a_m}{8} \norm{\nabla \partial_t \phi}_{L^2(\Omega)}^2 +
C \| \phi\|_{H^2(\Omega)}^4  \| \nabla \mu\|_{L^2(\Omega)}^2.
\end{align*}
Hence, recalling that \eqref{est-A-H2} and \eqref{est-phi-H2}, we arrive at
\begin{align}
\label{final-3}
\ddt \left( \int_{\Omega} b(\phi) \left|\nabla \mu\right|^2 \, \d x\right)
 + \frac{a_m}{4}
 \norm{\nabla \partial_t \phi}_{L^2(\Omega)}^2
\leq C\left(1+  \| \nabla \mu\|_{L^2(\Omega)}^2
 \right)
  \int_{\Omega} b(\phi) \left|\nabla \mu\right|^2 \, \d x.
\end{align}
In order to derive a global control independent of time, we recall that, due to \eqref{mu-H1},
$$
\sup_{t \geq 0} \int_t^{t+1} \int_{\Omega} b(\phi) \left|\nabla \mu\right|^2 \, \d x \, \d s\leq C_0,\quad
\sup_{t \geq 0} \int_t^{t+1} C \left(1+ \| \nabla \mu\|_{L^2(\Omega)}^2
 \right) \, \d s \leq C_1,
$$
where $C_0$ and $C_1$ are two positive constant depending on $E(\phi_0)$, $m$ and the parameters of the system.
By applying the uniform Gronwall lemma (see \cite[Chapter III, Lemma
1.1]{TEMAM}),
we derive that
\begin{equation}
\label{Glob_muH1_3}
 \sup_{t \geq \tau} \int_{\Omega} |\nabla \mu(t)|^2 \, \d x
  \leq  \frac{C_0}{b_m \tau } \, {\rm exp} (C_1).
 \end{equation}
In the sequel, we denote by $C(\tau)$ a generic constant depending on the parameters of the system, the initial energy $E(\phi_0)$, and $\tau>0$.
Thanks to \eqref{est-mu-H1}, it immediately follows from \eqref{Glob_muH1_3} that
\begin{equation}
\label{Glob_muH1_4}
\| \mu\|_{L^\infty(\tau,\infty; H^1(\Omega))} \leq C(\tau).
\end{equation}
As a consequence, we learn from \eqref{est-phit-H-1} that
\begin{equation}
\label{phit-inf}
 \| \partial_t \phi \|_{L^\infty(\tau,\infty;  H^{-1}_{(0)}(\Omega))}\leq C(\tau).
\end{equation}
Now, we integrate \eqref{final-3} on the time interval $[t,t+1]$ with $t \geq \tau$. Thanks to \eqref{L2-u-mu_1} and \eqref{Glob_muH1_4}, we infer that
\begin{equation}
\label{phit-H1}
\sup_{t \geq \tau} \int_t^{t+1} \| \nabla \partial_t \phi (s)\|_{L^2(\Omega)}^2 \, \d s
\leq  C(\tau),
\end{equation}
which,  by the conservation of mass,
 gives 
 $$
 \partial_t \phi \in L^2_{\uloc}([\tau,\infty); H^{1}_{(0)}(\Omega)).
 $$
  By \eqref{F'-LP}, \eqref{A2p} and \eqref{phi-W2p}, we learn that
\begin{equation}
\label{phi2p}
\| \phi\|_{L^\infty(\tau,\infty; W^{2,p}(\Omega))}
+ \| F'(\phi)\|_{L^\infty(\tau,\infty; L^p(\Omega))}
\leq C(\tau, p), \quad \forall \, p \in [2,\infty).
\end{equation}
Next, we prove the separation property \eqref{RS2} following \cite{CG} and \cite{HW2021}. To this end, we first observe that $\phi \in L^\infty(\tau,\infty; W^{2,6}(\Omega)) \cap W^{1,2}_{\rm uloc}(\tau, \infty;H^1(\Omega)) \hookrightarrow C(\overline{\Omega} \times [\tau; T])$, for any $T \geq \tau$, by standard interpolation. We claim that
\begin{equation}
\label{F''p}
\| F''(\phi)\|_{L^\infty(\tau,\infty; L^p(\Omega))}
\leq  C \left(\tau, p\right) \!, \quad \forall \, p \in [2,\infty).
\end{equation}
In fact, recalling that $\|f\|_{L^p(\Omega)}\leq C\sqrt{p}\|f\|_{H^1(\Omega)}$ for $2\leq p<\infty$, we learn from \eqref{F'-LP} that
\begin{align*}
\esssup_{t \geq \tau }\| F'(\phi(t))\|_{L^p(\Omega)}
&\leq C\left( 1+ \sqrt{p} \| \mu\|_{L^\infty(\tau,\infty;H^1(\Omega))} \right)
\leq C(\tau) ( 1+ \sqrt{p} ),
\end{align*}
where the constant $C(\tau)$ is independent of $p$. Since $F''(s)\leq C \mathrm{e}^{C |F'(s)|}$ for all $s \in (-1,1)$, by arguing as in \cite[pg. 2279--2281]{GGG2023}, we obtain the desired conclusion \eqref{F''p}.
Now, by \cite[Lemma 3.2]{HW2021}, $\partial_{x_i} F'(\phi)= F''(\phi) \partial_{x_i} \phi$ in the sense of distributions (and thus almost everywhere).
As an immediate consequence, we derive that
\begin{align*}
\esssup_{t \geq \tau } \| F'(\phi(t))\|_{W^{1,3}(\Omega)}
&\leq 
\| F'(\phi(t))\|_{L^\infty(\tau,\infty; L^{3}(\Omega))}
\\
& \quad + \| F''(\phi(t))\|_{ L^\infty(\tau,\infty; L^{6}(\Omega))}
\| \nabla \phi(t)\|_{L^\infty(\tau,\infty; L^{6}(\Omega))}
\leq  C(\tau).
\end{align*}
As a consequence, we obtain that $\| F'(\phi)\|_{L^\infty( \Omega \times (\tau,\infty) )} \leq C(\tau)$. 
Then, we deduce from $\displaystyle \lim_{s\to \pm 1} |F'(s)| = \infty$ that there exists $\delta>0$ such that  \eqref{RS2} holds.

\smallskip
We now go back to the elliptic problem \eqref{Aphi-ELL} for $A(\phi)$ with right-hand side given by
$$
f= \frac{1}{\sqrt{a(\phi)}} \left( \mu- \Psi'(\phi) \right) + A(\phi).
$$
It is immediate to observe that $f \in L^\infty(\tau,\infty; L^2(\Omega))$. Moreover, we have
$$
\partial_{x_i}f = - \frac12
\frac{a'(\phi)}{a^\frac32(\phi)}  \left( \mu- \Psi'(\phi) \right) \partial_{x_i} \phi
+
\frac{1}{\sqrt{a(\phi)}} \left( \partial_{x_i} \mu- \Psi''(\phi) \partial_{x_i} \phi\right) + \sqrt{a(\phi)} \partial_{x_i} \phi.
$$
By exploiting \eqref{a-ndeg}, we immediately obtain
\begin{align}
\|\partial_{x_i}f  \|_{L^2(\Omega)}
& \leq
\left\| 
\frac{a'(\phi)}{2a^\frac32(\phi)} \right\|_{L^\infty(\Omega)} \| \mu- \Psi'(\phi)\|_{L^2(\Omega)} \| \partial_{x_i} \phi\|_{L^\infty(\Omega)}
+ \left\| \frac{1}{\sqrt{a(\phi)}}\right\|_{L^\infty(\Omega)} \| \mu\|_{H^1(\Omega)}
\notag \\
&\quad +  \left\| \frac{1}{\sqrt{a(\phi)}}\right\|_{L^\infty(\Omega)}  \| \Psi''(\phi)\|_{L^2(\Omega)} \| \partial_{x_i} \phi\|_{L^\infty(\Omega)} 
+ \| \sqrt{a(\phi)}\|_{L^\infty(\Omega)} \| \partial_{x_i} \phi\|_{L^2(\Omega)}.
\label{fdeltai}
\end{align}
Hence, by \eqref{Glob_muH1_4}, \eqref{phi2p} and \eqref{F''p}, we deduce that
$$
\| f \|_{L^\infty(\tau,\infty; H^1(\Omega))}\leq C(\tau),
$$
and by the elliptic regularity theory for \eqref{Aphi-ELL}, we get
\begin{equation}
\label{Aphi-H3}
\| A(\phi) \|_{L^\infty( \tau,\infty; H^3(\Omega))} \leq C(\tau).
\end{equation}
Finally, since $\phi$ solves the elliptic problem \eqref{Aphi-ELL-0-1} with $u=\phi$, we deduce from \eqref{h1}-\eqref{h2}, \eqref{phi2p} and \eqref{Aphi-H3} that
\begin{equation*}
\| \phi\|_{L^\infty(\tau,\infty; H^3(\Omega))}\leq C(\tau).
\end{equation*}

\subsection{Energy equality} Let $\phi$ be the weak solution originating from $\phi_0 \in \mathcal{V}_m$. For any $\tau>0$, we recall that $\phi$ satisfies the properties \eqref{RS1}-\eqref{RS3}. Thus, an application of \cite[Lemma 2.6]{SP2013} entails that
\begin{equation}
\label{EE-s}
E(\phi(t))+ \int_\tau^t
\left\| \sqrt{b(\phi(s))} \nabla \mu(s)\right\|_{L^2(\Omega)}^2 \, \d s =  E (\phi(\tau)), \quad \forall \, t \geq \tau,
\end{equation}
for any $\tau>0$. In order to prove the claimed energy equality \eqref{EI}, we are left to show the validity of \eqref{EE-s} for $\tau=0$.
To this aim, we first observe that the gradient part of the energy can be rewritten in terms of $A$, namely, for any $\phi\in \mathcal{V}_m$
\begin{equation}
\label{LSC-4}
E(\phi) = \int_\Omega \frac{1}{2} |\nabla A(\phi)|^2 + \Psi(\phi) \,\d x.
\end{equation}
Besides, since $\left\| \sqrt{b(\phi(s))} \nabla \mu(s)\right\|_{L^2(\Omega)}^2 \in L^1(0,\infty)$, we can pass to the limit as $\tau \to 0$ in \eqref{EE-s},
which leads to
\begin{equation}
\label{EE-s-2}
E(\phi(t))+ \int_0^t
\left\| \sqrt{b(\phi(s))} \nabla \mu(s)\right\|_{L^2(\Omega)}^2 \, \d s = \lim_{\tau \to 0}  E (\phi(\tau)), \quad \forall \, t \geq 0.
\end{equation}
Owing to the energy inequality \eqref{EE}, we thus deduce that
\begin{equation}
\label{EE-s-3}
\lim_{\tau \to 0}  E (\phi(\tau)) \leq E(\phi_0).
\end{equation}
On the other hand, it follows from \eqref{WS1} and \eqref{WS3}
that $\phi \in BC_{\rm w}([0,\infty); H^1(\Omega)) \cap C([0,\infty); L^2(\Omega))$. Then, we have
\begin{equation}
\label{LSC-1}
\lim_{\tau \to 0} \| \phi(\tau)\|_{L^2(\Omega)}^2
= \| \phi_0\|_{L^2(\Omega)}^2,
\end{equation}
and
\begin{equation}
\label{LSC-1bis}
\lim_{\tau \to 0} \| A(\phi(\tau)) \|_{L^2(\Omega)}^2
= \| A(\phi_0) \|_{L^2(\Omega)}^2.
\end{equation}
By the convexity of $F$, it also follows that
\begin{equation}
\label{LSC-2}
\liminf_{\tau \to 0} \int_{\Omega} F(\phi(\tau)) \, \d x\geq \int_{\Omega} F(\phi_0) \, \d x.
\end{equation}
In addition, recalling that $A(\phi) \in L^\infty(0,\infty; H^1(\Omega))$, by the lower-semicontinuity of the norm and \eqref{LSC-1bis},
we derive that there exists a sequence $\lbrace \tau_n \rbrace_{n=0}^\infty$ such that $\tau_n \to 0$ as $n \to \infty$ and
\begin{equation}
\label{LSC-3}
\liminf_{n \to \infty} \int_\Omega |\nabla A(\phi(\tau_n))|^2 \, \d x
\geq \int_\Omega |\nabla A(\phi_0)|^2 \, \d x.
\end{equation}
By collecting \eqref{LSC-4} and \eqref{LSC-1}-\eqref{LSC-3} together,
we obtain
\begin{equation}
\label{LSC-5}
\liminf_{n \to \infty}  E (\phi(\tau_n)) \geq E(\phi_0).
\end{equation}
Thus, in light of \eqref{EE-s-3} and \eqref{LSC-5}, we conclude that
\begin{equation}
\label{EE-0}
\lim_{\tau \to 0}  E (\phi(\tau)) = E(\phi_0),
\end{equation}
which, by \eqref{EE-s-2}, proves the energy identity \eqref{EI}.

\smallskip Let us now focus on the continuity properties of $\phi$ with respect to time. By \eqref{EI}, we immediately have that $E(\phi(\cdot)) \in AC([0,T])$, for any $T>0$. Hence, for any $t,s\in [0,T]$ we find that
\begin{align*}
\limsup_{t \to s} \int_\Omega \frac12 |\nabla A(\phi(t))|^2 \, \d x
&= \limsup_{t \to s} \left( E(\phi(t))- \int \Psi(\phi(t)) \, \d x\right)
= E(\phi(s)) - \liminf_{t \to s} \int \Psi(\phi(t)) \, \d x
\\
&\leq E(\phi(s)) - \int \Psi(\phi(s)) \, \d x= \int_\Omega \frac12 |\nabla A(\phi(s))|^2 \, \d x.
\end{align*}
Therefore, since
$$
\liminf_{t \to s} \int_\Omega |\nabla A(\phi(t))|^2 \, \d x\geq
\int_\Omega |\nabla A(\phi(s))|^2 \, \d x,
$$
we conclude that
\begin{equation}
\label{Aphi-C}
\|\nabla A(\phi(\cdot))\|_{L^2(\Omega)}\in C([0,T]), \quad \forall \, T\geq 0.
\end{equation}
By the equality
\begin{align*}
&\nabla \phi(t) -\nabla \phi(s)=
\frac{1}{\sqrt{a(\phi(t))}}\nabla A(\phi(t)) -\frac{1}{\sqrt{a(\phi(s))}}\nabla A(\phi(s)),
\end{align*}
adding and subtracting the term $\frac{1}{\sqrt{a(\phi(t))}}\nabla A(\phi(s))$, 
we infer that
\begin{align*}
\left\|\nabla \phi(t)-\nabla \phi(s)\right\|_{L^2(\Omega)}^2
&\leq
\frac{2}{a_m}\left\|\nabla A(\phi(t))-\nabla A(\phi(s))\right\|^2_{L^2(\Omega)}\\
&\quad +
2\int_\Omega\left(\frac{1}{\sqrt{a(\phi(t))}}-\frac{1}{\sqrt{a(\phi(s))}}\right)^2|\nabla A(\phi(s))|^2\,\d x.
\end{align*}
Now notice that, if we take any sequence $\lbrace \tau_n \rbrace_{n=0}^\infty$ such that $\tau_n \to s$ as $n \to \infty$, by the dominated convergence theorem,
the second term on the right-hand
side converges to 0 as $n\to\infty$. Indeed, since $\phi \in C([0,\infty); L^2(\Omega))$, we have
$$f_n:=\left(\frac{1}{\sqrt{a(\phi(\tau_n))}}-\frac{1}{\sqrt{a(\phi(s))}}\right)^2|\nabla A(\phi(s))|^2\to 0\quad \text{a.e. in } \Omega,$$
as $n\to\infty$, and $|f_n|\leq \frac{2}{a_m}|\nabla A(\phi(s))|^2\in L^1(\Omega)$.
At this point we easily conclude from \eqref{Aphi-C} that
$$\lim_{t\to s}\left\|\nabla \phi(t)-\nabla \phi(s)\right\|_{L^2(\Omega)}^2=0,$$
yielding the conclusion $\phi \in C([0,\infty); H^1(\Omega))$.
The proof of Theorem \ref{well-pos-2D}, part (4), is now completed. 

\section{Convergence to equilibrium in two dimensions}
\label{s-convergence}

This section is devoted to characterizing the longtime behavior of any solution in two dimensions. Let $\phi_0\in \mathcal{V}_m$ and let $\phi(t)$ be the corresponding global weak solution to \eqref{CH1}-\eqref{CH-ic}. Fixed $\tau>0$, since $\phi \in L^\infty(\tau,\infty; H^3(\Omega))$ and $\partial_t \phi \in L^2(0,\infty; H_{(0)}^{-1}(\Omega))$, it immediately follows that $\phi \in BC_{\rm w}([\tau,\infty); H^3(\Omega)) \cap BUC([\tau,\infty); H^2(\Omega))$.
Then, the $\omega$-limit set, defined by
$$
\omega(\phi_0)= \Big\lbrace \phi' \in H^3(\Omega): \ \exists \, t_n \to \infty \text{ such that } \phi(t_n) \to \phi' \text{ in } H^2(\Omega) \Big\rbrace,
$$
is non-empty and compact. 
\begin{lemma}
For any $\phi_0\in \mathcal{V}_m$, we have $\omega(\phi_0)\subset \mathcal{S}_m.$
\end{lemma}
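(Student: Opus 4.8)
The plan is to run the classical $\omega$-limit argument for gradient flows: to identify any element of $\omega(\phi_0)$ as a limit, in a topology strong enough to handle the quadratic gradient term, of time sections $\phi(s_n)$ along which $\nabla\mu$ becomes small, and then to pass to the limit in \eqref{e3}.

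First I would record two consequences of the global estimates. From the energy equality \eqref{EI}, the map $t\mapsto E(\phi(t))$ is non-increasing and bounded below, hence convergent as $t\to\infty$, and $\int_0^\infty\|\sqrt{b(\phi(s))}\nabla\mu(s)\|_{L^2(\Omega)}^2\,\d s<\infty$; together with \eqref{m-ndeg} this gives $\int_t^{t+1}\|\nabla\mu(s)\|_{L^2(\Omega)}^2\,\d s\to0$ as $t\to\infty$. Likewise \eqref{WS3} yields $\int_t^{t+1}\|\partial_t\phi(s)\|_{H^{-1}_{(0)}(\Omega)}^2\,\d s\to0$. Now fix $\phi'\in\omega(\phi_0)$ and a sequence $t_n\to\infty$ with $\phi(t_n)\to\phi'$ in $H^2(\Omega)$, assuming without loss of generality $t_n\geq\tau$ for some $\tau>0$, so that the bounds \eqref{RS1}--\eqref{RS2} apply along the sequence. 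By the above I can pick $s_n\in[t_n,t_n+1]$ with $\|\nabla\mu(s_n)\|_{L^2(\Omega)}\to0$, and since $\|\phi(s_n)-\phi(t_n)\|_{H^{-1}_{(0)}(\Omega)}\leq\int_{t_n}^{t_n+1}\|\partial_t\phi(s)\|_{H^{-1}_{(0)}(\Omega)}\,\d s\to0$, I get $\phi(s_n)\to\phi'$ in $H^{-1}_{(0)}(\Omega)$. Combining this with the uniform bound $\|\phi(s_n)\|_{H^3(\Omega)}\leq C(\tau)$ from \eqref{RS1} and the compact embedding $H^3(\Omega)\hookrightarrow H^2(\Omega)$, every subsequence of $\{\phi(s_n)\}$ has a further subsequence converging in $H^2(\Omega)$, whose limit must coincide with the $H^{-1}_{(0)}$-limit $\phi'$; hence $\phi(s_n)\to\phi'$ in $H^2(\Omega)$.

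It then remains to pass to the limit in the chemical potential identity \eqref{e3} evaluated at $t=s_n$. Since $d=2$ and $H^2(\Omega)\hookrightarrow W^{1,4}(\Omega)\cap C(\overline\Omega)$, we have $\phi(s_n)\to\phi'$ in $W^{1,4}(\Omega)$ and uniformly on $\overline\Omega$; by \eqref{RS2}, $\phi'$ is uniformly separated from $\pm1$, so $a(\phi(s_n))\to a(\phi')$ and $a'(\phi(s_n))\to a'(\phi')$ uniformly, $|\nabla\phi(s_n)|^2\to|\nabla\phi'|^2$ in $L^2(\Omega)$, and $\Psi'(\phi(s_n))\to\Psi'(\phi')$ in $L^2(\Omega)$. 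Therefore the right-hand side of \eqref{e3} at $t=s_n$ converges in $L^2(\Omega)$ to $-\div(a(\phi')\nabla\phi')+\tfrac{a'(\phi')}{2}|\nabla\phi'|^2+\Psi'(\phi')$, and so does $\mu(s_n)$; as $\nabla\mu(s_n)\to0$ in $L^2(\Omega)$, this limit is a constant, and integrating \eqref{e3} over $\Omega$ at $t=s_n$ (using $\partial_\n\phi(s_n)=0$) identifies that constant as $\overline{\tfrac{a'(\phi')}{2}|\nabla\phi'|^2+\Psi'(\phi')}$. This is exactly \eqref{SSS-i}; the boundary condition \eqref{SSS-ii} survives the limit by continuity of the trace on $H^2(\Omega)$, $\overline{\phi'}=m$ by \eqref{cons-mass}, and $\phi'\in H^2(\Omega)$ with $\Psi'(\phi')\in L^\infty(\Omega)\subset L^2(\Omega)$; hence $\phi'\in\mathcal{S}_m$.

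The step I expect to require the most care is upgrading $\phi(s_n)\to\phi'$ from the weak ($H^{-1}_{(0)}$) to the $H^2(\Omega)$ topology, which is precisely what lets the quadratic term $|\nabla\phi(s_n)|^2$ converge strongly in $L^2(\Omega)$; this hinges on the uniform-in-time regularity \eqref{RS1} and separation \eqref{RS2}, both of which are available only away from $t=0$, so restricting to tails of the trajectory causes no loss.
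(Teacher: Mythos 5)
Your proof is correct, but it follows a genuinely different route from the paper's. You use the classical ``direct'' argument: exploit $\int_0^\infty\|\sqrt{b(\phi)}\nabla\mu\|_{L^2(\Omega)}^2\,\d s<\infty$ to select time slices $s_n$ with $\nabla\mu(s_n)\to0$, upgrade the convergence $\phi(s_n)\to\phi'$ to $H^2(\Omega)$ via the uniform $H^3$ bound and compactness, and then pass to the limit in the pointwise identity \eqref{e3}, identifying the limiting (constant) chemical potential by integration over $\Omega$. The paper instead runs a LaSalle-type invariance argument: it introduces the solution semigroup $S(t)$, shows that $E$ is constant (equal to $E_\infty$) on $\omega(\phi_0)$, uses the $H^2$-continuity of $S(t)$ and the concatenation rule to prove that $\omega(\phi_0)$ is positively invariant, and then concludes from the strict energy dissipation that every $\phi'\in\omega(\phi_0)$ generates a constant-energy, hence stationary, trajectory. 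The paper's route leans on uniqueness and continuous dependence (part (2) of Theorem \ref{well-pos-2D}) to obtain the semigroup structure, but never has to pass to the limit in the Euler--Lagrange equation; your route avoids uniqueness and the semigroup entirely, at the price of the (routine, and correctly executed) strong-convergence upgrade needed to handle the quadratic term $a'(\phi)|\nabla\phi|^2/2$. Both are complete; yours is arguably more robust since it would survive in settings where uniqueness of weak solutions is not available.
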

\begin{proof}
We introduce some terminology from the theory of dynamical systems, see e.g. \cite[Chapter 9]{CH1998}. According to Theorem \ref{well-pos-2D}, we associate problem \eqref{CH1}-\eqref{CH-ic} with the solution map
$$S(t):\mathcal{V}_m\to \mathcal{V}_m, 
\quad
z_0 \mapsto S(t)z_0= z(t), \forall \, t \geq 0,
$$
where $z(t) $ is the unique weak solution to \eqref{CH1}-\eqref{CH-bc} departing from $z_0$. This is a one-parameter family of maps satisfying $S(0)z_0=z_0$ for any $z_0 \in \mathcal{V}_m$ and the concatenation rule
$S(t+s)=S(t)S(s)$, for every $t,s \geq 0$.
In addition,owing to the energy inequality \eqref{EE}, the following properties hold:
\begin{itemize}
\item[(i)] $E(S(t)z_0)\leq E(z_0)$, for any $t\geq 0$, $\forall\, z_0\in \mathcal{V}_m$;
\smallskip

\item[(ii)] $E(S(t)z_0)=E(z_0)$ for any $t\geq 0$ implies that $z_0$ is a stationary point of the map $S(t)$, namely $S(t)z_0=z_0$, $\forall \,t\geq 0$, which means $z\in \mathcal{S}_m$ (see Section \ref{s-stationary}).
\end{itemize}
Besides, the continuous dependence estimate \eqref{UNIQ} and the regularity $BC_{\rm w}([\tau,\infty); H^3(\Omega))$, for any $\tau>0$, entail by interpolation that $S(t) \in C(H^2(\Omega); H^2(\Omega))$, for any $t >0$. 

We are now ready to prove the result. To this end, we observe that $\phi(t)=S(t)\phi_0$. Since the map $t\mapsto E(\phi(t))$ is decreasing by (i) and bounded from below, there exists $E_\infty\in\R$ such that
\begin{equation}
\label{Einf}
E_\infty=\lim_{t\to \infty} E(\phi(t)).
\end{equation}
Next, we claim that
\begin{equation}
\label{Einf-2}
\quad E(\phi')=E_\infty, \ \forall \, \phi' \in \omega(\phi_0).
\end{equation}
Indeed, by the separation property \eqref{RS2} of the solution $\phi$, there exists $\delta>0$ such that 
$$
\|\phi(t)\|_{L^\infty(\Omega)}\leq 1-\delta, \quad \forall \, t\geq 1.
$$
By the embedding $H^2(\Omega)\hookrightarrow C(\overline{\Omega})$, we deduce that
$$
\|\phi'\|_{L^\infty(\Omega)}\leq 1-\delta,\quad \forall \, \phi'\in \omega(\phi_0).
$$
Therefore, for any $\phi'\in \omega(\phi_0)$ and $t\geq 1$, we obtain
\begin{align*}
E(\vp(t))-E(\vp')
&=\frac12\int_\Omega a(\vp(t))(|\nabla \vp(t)|^2-|\nabla\vp'|^2)\,\d x
+\frac12\int_\Omega (a(\vp(t))-a(\vp'))|\nabla\vp'|^2\,\d x
\notag
\\ 
&\quad
+\int_\Omega \Psi(\vp(t))-\Psi(\vp')\, \d x 
\notag
\\
 &\leq \frac{a_M}{2}
 \|\nabla (\vp(t)+\vp')\|_{L^2(\Omega)}
\|\nabla (\vp(t)-\vp')\|_{L^2(\Omega)}
\notag
\\
&\quad
+\max_{s\in [-1,1]}|a'(s)| \|\vp(t)-\vp'\|_{L^\infty(\Omega)}\|\nabla\vp'\|_{L^2(\Omega)}^2
+ \max_{s\in[-1+\delta,1-\delta]}|\Psi'(s)|
  \|\vp(t)-\vp'\|_{L^1(\Omega)}\nonumber\\
& \leq C \|\vp(t)-\vp'\|_{H^2(\Omega)}.
\end{align*}
Since $\phi'\in \omega(\phi_0)$, there exists $t_n\to\infty$ such that $\phi(t_n)\to \phi'$ in $H^2(\Omega)$. Thus, exploiting the above inequality, we have
$$
0\leq E(\vp(t_n))-E(\phi')\leq \gamma \|\vp(t_n)-\phi'\|_{H^2(\Omega)},
$$
which yields
$$
E(\phi')=\lim_{n\to\infty} E(\vp(t_n)).
$$
Hence, in light of \eqref{Einf}, we finally derive that $E(\phi')=E_\infty$, as claimed.
\smallskip

Let now $\phi' \in \omega(\phi_0)$ be fixed and let
$t_n\to\infty$ be such that $S(t_n)\phi_0\to \phi'$ in $H^2(\Omega)$. 
Setting $\tau_n=t+t_n$, where $t >0$ is arbitrarily fixed, by the concatenation rule and the $H^2$-continuity of the map $S(t)$, 
we obtain
$$
S(\tau_n)\phi_0=S(t)S(t_n)\phi_0\to S(t)\phi'\quad \text{in }H^2(\Omega),
$$
namely 
$$
S(t)\phi'\in \omega(\phi_0),\quad\forall \, t >0.
$$
Owing to \eqref{Einf-2}, we thus learn that
$$
E(S(t)\phi')=E_\infty=E(\phi'), \quad \forall \, t \geq 0.
$$
At this point, we exploit property (ii) to conclude that $\phi'\in \mathcal{S}_m$. By the arbitrariness of $\phi'\in \omega(\phi_0)$, the proof is complete.
\end{proof}

\medskip
We are now ready to prove the following result, which implies the claim of part (5) of Theorem \ref{well-pos-2D}. 

\begin{lemma}
For any $\phi_0\in \mathcal{V}_m$, the set $\omega(\phi_0)$ is a singleton.
\end{lemma}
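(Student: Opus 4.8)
The goal is to upgrade the previously established convergence along a sequence $\phi(t_n)\to\phi'$ to convergence of the full trajectory, that is, to prove $\omega(\phi_0)=\{\phi_\infty\}$ for a single $\phi_\infty\in\mathcal{S}_m$. The standard mechanism is the Lojasiewicz--Simon argument of Simon \cite{SIMON}: I will combine the energy identity \eqref{EI}, the previously proven regularity \eqref{RS1}--\eqref{RS3}, and the Lojasiewicz--Simon inequality of Theorem \ref{LSg-intro} to show that $t\mapsto\phi(t)$ is a Cauchy net in the relevant norm.

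First I would fix $\phi'\in\omega(\phi_0)$, which lies in $\mathcal{S}_m$ by the previous Lemma. By Theorem \ref{LSg-intro} applied at $\psi=\phi'$, there exist $\theta\in(0,\tfrac12)$, $C_L>0$, $\beta>0$ such that \eqref{LS0} holds for all $u\in H^2(\Omega)$ with $\overline u=m$, $\partial_\n u=0$, $\|u-\phi'\|_{H^2(\Omega)}\le\beta$. Note that the $L^2$-norm of the chemical potential residual on the right side of \eqref{LS0} is exactly $\|\mu(t)-\overline{\mu(t)}\|_{L^2(\Omega)}$ along the solution (using \eqref{mu-equiv}); and $E(\phi')=E_\infty$ by \eqref{Einf-2}, so by \eqref{EI} the quantity $t\mapsto E(\phi(t))-E_\infty$ is nonincreasing, nonnegative, and tends to $0$. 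Set $H(t):=(E(\phi(t))-E_\infty)^{\theta}$. The decisive computation is: as long as $\phi(t)$ stays in the $H^2$-ball of radius $\beta$ around $\phi'$, using the chain rule $-\tfrac{d}{dt}E(\phi(t))=\|\sqrt{b(\phi)}\nabla\mu\|_{L^2}^2$ (from \eqref{EI}), the equation $\langle\partial_t\phi,v\rangle=-(b(\phi)\nabla\mu,\nabla v)$, and the bound $\|\partial_t\phi\|_{H^{-1}_{(0)}}\le C\|\nabla\mu\|_{L^2}$ together with the equivalence of $\|\nabla\mu\|_{L^2}$ and $\|\mu-\overline\mu\|_{H^1}$ plus $\|\mu-\overline\mu\|_{L^2}\le C\|\nabla\mu\|_{L^2}$ (Poincaré--Wirtinger), one obtains
\begin{align*}
-\ddt H(t) = \theta (E(\phi(t))-E_\infty)^{\theta-1}\|\sqrt{b(\phi)}\nabla\mu\|_{L^2}^2 \ge c\,\frac{\|\nabla\mu\|_{L^2}^2}{\|\mu-\overline\mu\|_{L^2(\Omega)}} \ge c'\,\|\partial_t\phi(t)\|_{H^{-1}_{(0)}(\Omega)},
\end{align*}
where in the last two steps I used \eqref{LS0} with $\theta$ as above (so $(E-E_\infty)^{\theta-1}=(E-E_\infty)^{-(1-\theta)}\ge C_L^{-1}\|\mu-\overline\mu\|_{L^2}^{-1}$) and then $\|\nabla\mu\|_{L^2}^2\ge b_m^{-1}\|\sqrt{b}\nabla\mu\|_{L^2}^2$ etc., finally $\|\nabla\mu\|_{L^2}\ge c\|\partial_t\phi\|_{H^{-1}_{(0)}}$ — the precise juggling of constants is routine. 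Integrating in time gives $\int_{t_1}^{t_2}\|\partial_t\phi(s)\|_{H^{-1}_{(0)}}\,\d s\le C\big(H(t_1)-H(t_2)\big)\le C H(t_1)$, which is finite and tends to $0$ as $t_1\to\infty$; hence $\phi(t)$ is Cauchy in $H^{-1}_{(0)}(\Omega)$ as $t\to\infty$, so the whole trajectory converges in $H^{-1}_{(0)}(\Omega)$ to some limit. Since $\omega(\phi_0)$ is nonempty and each of its elements is such a limit, $\omega(\phi_0)$ must be the single point $\phi_\infty=\phi'$, and the convergence upgrades to $H^2(\Omega)$ by the uniform $H^3$-bound \eqref{RS1} and interpolation (a bounded-in-$H^3$, convergent-in-$H^{-1}$ family converges in $H^2$).

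The main obstacle is the standard bootstrap/continuation issue: the Lojasiewicz inequality is only valid while $\phi(t)$ remains in the $H^2$-neighborhood $\{\|u-\phi'\|_{H^2}\le\beta\}$, and the differential estimate above is needed precisely to guarantee that the trajectory does not escape this neighborhood. I would handle this by the usual argument: choose $t_n$ with $\phi(t_n)\to\phi'$ in $H^2$ and $E(\phi(t_n))-E_\infty$ small; define $\bar t=\sup\{t\ge t_n:\ \|\phi(s)-\phi'\|_{H^2}\le\beta\ \forall s\in[t_n,t]\}$; use the $H^{-1}_{(0)}$-bound from integrating $-\dot H$ plus the uniform $H^3$-bound and interpolation to estimate $\|\phi(t)-\phi(t_n)\|_{H^2}\le C\|\phi(t)-\phi(t_n)\|_{H^{-1}_{(0)}}^{\alpha}\le C\big(\int_{t_n}^t\|\partial_t\phi\|_{H^{-1}_{(0)}}\big)^{\alpha}\le C H(t_n)^{\alpha}$ for some $\alpha\in(0,1)$; choosing $n$ large makes this $<\beta/2$, so $\|\phi(t)-\phi'\|_{H^2}<\beta$ strictly on $[t_n,\bar t)$, forcing $\bar t=\infty$. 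This closes the continuation and yields global validity of the estimate, hence convergence of the full orbit; combined with the density/compactness of $\omega(\phi_0)$ this gives that it is a singleton, completing the proof and, with it, part (5) of Theorem \ref{well-pos-2D}.
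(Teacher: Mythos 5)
Your proof is correct and rests on the same Lojasiewicz--Simon mechanism as the paper's: the differential inequality $-\frac{\mathrm{d}}{\mathrm{d}t}\left( E(\phi(t))-E_\infty\right)^{\theta}\gtrsim \|\partial_t\phi(t)\|_{H^{-1}_{(0)}(\Omega)}$, its integration to obtain $\partial_t\phi\in L^1$ in time with values in $H^{-1}_{(0)}(\Omega)$, and the upgrade of the resulting $H^{-1}_{(0)}$-convergence to $H^2(\Omega)$ via the uniform $H^3$ bound. The one place where you genuinely diverge is in securing the validity of the gradient inequality along the orbit. You run the classical Simon trapping argument: a single ball of radius $\beta$ around one $\phi'\in\omega(\phi_0)$, an escape time $\bar t$, and the interpolation $\|\phi(t)-\phi(t_n)\|_{H^2(\Omega)}\le C\|\phi(t)-\phi(t_n)\|_{H^{-1}_{(0)}(\Omega)}^{1/4}\|\phi(t)-\phi(t_n)\|_{H^3(\Omega)}^{3/4}$ to show the orbit cannot leave the ball once $H(t_n)$ is small; closing this requires the $H^2$-continuity of the trajectory, which is indeed available since $\phi\in BUC([\tau,\infty);H^2(\Omega))$. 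The paper instead covers the compact set $\omega(\phi_0)$ by finitely many Lojasiewicz balls centered at points of $\omega(\phi_0)$ (all at the common energy level $E_\infty$), extracts uniform constants $\widetilde{\theta},\widetilde{C}$, and uses $\mathrm{d}_{H^2(\Omega)}(\phi(t),\omega(\phi_0))\to 0$ to conclude that $\phi(t)$ lies in that union for all large $t$ --- no continuation argument and no interpolation are needed at this stage. Both routes are standard and complete; yours is a bit longer but works from a single reference point, while the paper's finite-cover trick is shorter because the compactness of $\omega(\phi_0)$ and the decay of the $H^2$-distance to it were already established. (A minor slip: in your parenthetical chain the inequality ``$\|\nabla\mu\|_{L^2}^2\ge b_m^{-1}\|\sqrt{b}\nabla\mu\|_{L^2}^2$'' is reversed; what you need, and what your displayed conclusion actually uses, is $\|\sqrt{b(\phi)}\nabla\mu\|_{L^2}^2\ge b_m\|\nabla\mu\|_{L^2}^2$.)
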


\begin{proof}
Let $\phi'\in \omega(\phi_0)$. Since $\phi'\in \mathcal{S}_m$, the Lojasiewicz--Simon inequality in Theorem \ref{LSg-intro} holds true at $\phi'$, yielding
 constants $\theta\in \left(0, \frac12\right)$, $C>0$ and
$\beta >0$ such that 
\begin{equation*}
\label{Ls-1}
|E(u) -E(\phi')|^{1-\theta}\leq C_L \norm{-\div(a(u)\nabla u)+\frac{a'(u)}{2} |\nabla u|^2+\Psi'(u)-\overline{\frac{a'(u)}{2} |\nabla u|^2+\Psi'(u)}}_{L^2(\Omega)},
\end{equation*}
for all $u\in H^2(\Omega)$ such that $\overline{u}=m$, $\partial_\n u=0$ on $\partial\Omega$ and $\|u-\phi'\|_{H^2(\Omega)}\leq \beta$.

\medskip
\noindent
Exploiting the fact that $\omega(\phi_0)$ is a compact subset of $H^2(\Omega)$, we can cover $\omega(\phi_0)$ by finitely many open balls of $H^2(\Omega)$, denoted by $\lbrace B_i \rbrace_{i=1}^N\subset H^2(\Omega)$, centered at $\phi'_i \in \omega(\phi_0)$ and with radii $\beta_i$,
prescribed by Theorem \ref{LSg-intro} for $\phi'_i$.  Since $E(\phi'_i)=E_\infty$ for any $i$,
we conclude that there exist universal constants $\widetilde{\theta} \in \left(0,\frac12\right)$ and $\widetilde{C}>0$ such that
\begin{equation}
\label{Simon}
|E(u) -E_\infty|^{1-\widetilde{\theta}}\leq \widetilde{C} \norm{-\div(a(u)\nabla u)+\frac{a'(u)}{2} |\nabla u|^2+\Psi'(u)-\overline{\frac{a'(u)}{2} |\nabla u|^2+\Psi'(u)}}_{L^2(\Omega)},
\end{equation}
for all $u\in H^2(\Omega)$ such that $\overline{u}=m$, $\partial_\n u=0$ on $\partial\Omega$ and $u\in \mathcal{B}:= \bigcup_{i=1}^N B_i$.

\smallskip
Now, let $\phi(t)$ be the solution to problem \eqref{CH1}-\eqref{CH-ic}. Recalling that $\mathrm{d}_{H^2(\Omega)}(\phi(t), \omega(\phi_0)) \to 0$ as $t \to \infty$, there exists $t^\star$ such that
$\phi(t) \in \mathcal{B}\cap \mathcal{V}_m$ for all $t\geq t^\star$.  As a result, we can apply \eqref{Simon} with $u=\phi(t)$, for all $t\geq t^\star$. In which case, using the definition of $\mu$ in \eqref{CH2}
$$
\mu=-\div(a(\phi)\nabla \phi)+\frac{a'(\phi)}{2} |\nabla \phi|^2+\Psi'(\phi),
$$
we obtain from \eqref{Simon} the following inequality
\begin{equation}
\label{Simonfi}
|E(\phi(t))- E_\infty|^{1-\widetilde{\theta}}
\leq \widetilde{C}\norm{\mu(t)-\overline{\mu(t)}}_{L^2(\Omega)}\!\!, \quad
\forall \, t\geq t^\star.
\end{equation}
\medskip
Let us define the function $H: [ t^\star, \infty) \to \mathbb{R}_+$ by
$$
H(t):= \left( E(\phi(t))- E_\infty \right)^{\widetilde{\theta}}\!.
$$
By the regularity of the solution \eqref{RS1}-\eqref{RS3}, an application of \cite[Lemma 2.6]{SP2013}, together with the standard chain rule in Bochner spaces, gives
\begin{equation}
\label{Energy-eq.}
\ddt E(\phi) + \int_{\Omega} b(\phi)|\nabla \mu|^2 \, \d x =0,
\quad \text{for a.e. } t \in (1,\infty).
\end{equation}
By exploiting \eqref{poincare}, \eqref{Simonfi} and \eqref{Energy-eq.}, we obtain
\begin{align*}
- \ddt H(t)
&= - \widetilde{\theta} \left( E(\phi(t))- E_\infty \right)^{\widetilde{\theta}-1} \ddt E(\phi(t))
\\
&\geq \frac{\widetilde{\theta} }{\widetilde{C}} \frac{ \norm{ \sqrt{b(\phi(t))} \nabla \mu(t)}_{L^2(\Omega)}^2 }{\norm{\mu(t)-\overline{\mu(t)}}_{L^2(\Omega)} }
\geq \frac{\widetilde{\theta} }{C_P} \frac{b_m}{\widetilde{C} } \|\nabla \mu (t) \|_{L^2(\Omega)},
\quad \text{a.e. in } (t^\star, \infty).
\end{align*}
Now, the conclusion follows in a standard way. Integrating from $t^\star$ to $\infty$, and observing that $\displaystyle \lim_{t \to \infty} H(t)=0$, we infer that
$$
\int_{t^\star}^\infty \| \nabla \mu(t)\|_{L^2(\Omega)} \, \d t
\leq \frac{C_P \, \widetilde{C}}{\widetilde{\theta} b_m}
H(t^\star),
$$
which gives that $\nabla \mu \in L^1(t^\star, \infty; L^2(\Omega))$. Hence, by \eqref{est-phit-H-1}, we have that $\partial_t \phi \in L^1(t^\star, \infty; H^{-1}_{(0)}(\Omega))$. Since
$$
 \phi(t) = \phi(t^\star) +  \int_{t^\star}^t \partial_t \phi(\tau) \, \d \tau, \quad \forall \, t \geq t^\star\!,
$$
we conclude that $\displaystyle\lim_{t \to \infty} \phi(t)$ exists in $H^{-1}_{(0)}(\Omega)$.
Therefore, $\omega(\phi_0)=\lbrace \phi_\infty \rbrace$, where $\phi_\infty= \displaystyle \lim_{t \to \infty} \phi(t)$.
\end{proof}

\section{Local strong solutions in three dimensions}
\label{Local-Strong-3D}

In this section, we study the existence and uniqueness of local strong solutions in three dimensions. We will prove the following result, which provides part (2) of Theorem \ref{well-pos-3D}.

\begin{proposition}\label{CHstrong-solution}
Let $\phi_0\in H^2(\Omega)$ be such that $\overline{\phi_0}=m \in (-1,1) $,  $\partial_\n \phi_0=0$ on $\partial \Omega$ and
\begin{equation}
\label{mu0}
\left\| -\div \left( a(\phi_0) \nabla \phi_0 \right) + a'(\phi_0) \frac{|\nabla \phi_0|^2}{2} + \Psi'(\phi_0) \right\|_{H^1(\Omega)}\leq M.
\end{equation}
Then, there exist $T_M>0$, depending on $M$, $E(\phi_0)$ and $m$, and a unique strong solution $\phi$ to \eqref{CH1}-\eqref{CH-ic} on $[0,T_M]$ such that 
\begin{equation}
\label{SS-phi}
\begin{split}
&\phi \in L^\infty(0,T_M; W^{2,6}(\Omega)), \quad
\partial_t \phi \in L^2(0 ,T_M;H^1(\Omega)),\\
&\phi \in L^{\infty}(\Omega\times (0,T_M)) \quad\text{such that}\quad |\phi(x,t)|<1
\ \text{a.e. }(x,t)\in \Omega\times (0,T_M),\\
&\mu \in L^\infty(0,T_M; H^1(\Omega)),
\quad F'(\phi) \in L^\infty(0,T_M; L^6(\Omega)).
\end{split}
\end{equation}
The solution fulfills the problem \eqref{CH1}-\eqref{CH2} almost everywhere in $\Omega \times (0, T_M)$.
Furthermore, we have the following estimates
\begin{align}
\label{strong est 1}
&\norm{\partial_t \phi}_{L^2(0,T_M; H^1(\Omega))}
+ \norm{\partial_t \phi}_{L^{\infty}(0, T_M; H^{-1}_{(0)}(\Omega))}
\le C_M,
\\
\label{strong est 3}
&\norm{\phi}_{L^{\infty}(0,T_M; W^{2,6}(\Omega))} +
\norm{F'(\phi)}_{L^{\infty}(0, T_M; L^6(\Omega))}
\le C_M,
\\
\label{strong est 2}
& \norm{\mu}_{L^{\infty}(0,T_M; H^1(\Omega))}
\le C_M.
\end{align}
In addition, if $\| \phi_0\|_{L^\infty(\Omega)}\leq 1- \delta_0$, then there exists $\widetilde{T_M}\in (0,T_M]$, depending on $\delta_0$, but independent of the specific initial datum, such that
$\phi \in L^\infty(0,\widetilde{T_M}; H^3(\Omega))$ and
\begin{align}
\label{strong est 4}
&\max_{t \in [0,\widetilde{T_M} ]} \| \phi (t)\|_{C(\overline{\Omega})} \leq 1-\frac{\delta_0}{2}.
\end{align}
\end{proposition}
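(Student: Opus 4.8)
The plan is to produce the local strong solution by means of a priori estimates that close only on a short time interval (whose length is governed by $M$), and then to establish uniqueness by a contraction estimate that exploits the enhanced regularity.

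\textbf{Step 1: the key a priori estimate.} Working first with a smooth solution issued from smooth data, I would test \eqref{CH1} by $\partial_t\mu$ and use the reformulation $\mu=-\sqrt{a(\phi)}\,\Delta A(\phi)+\Psi'(\phi)$, exactly as in Section~\ref{S-Strong}, to arrive at the identity \eqref{first-rel}, i.e. $\tfrac12\ddt\!\int_\Omega b(\phi)|\nabla\mu|^2\,\d x+\int_\Omega a(\phi)|\nabla\partial_t\phi|^2\,\d x+(\text{nonnegative terms})=\text{R.H.S.}$ The new work is to bound the right-hand side in three space dimensions. The tools I would use are: the interpolation inequalities $\|\partial_t\phi\|_{L^3(\Omega)}\le C\|\partial_t\phi\|_{L^2(\Omega)}^{1/2}\|\partial_t\phi\|_{H^1(\Omega)}^{1/2}$ (cf.\ \eqref{lady_3}) and $\|\partial_t\phi\|_{L^2(\Omega)}\le C\|\partial_t\phi\|_{H^{-1}_{(0)}(\Omega)}^{1/2}\|\partial_t\phi\|_{H^1(\Omega)}^{1/2}$, together with $\|\partial_t\phi\|_{H^{-1}_{(0)}(\Omega)}\le C\|\nabla\mu\|_{L^2(\Omega)}$ from \eqref{est-phit-H-1}; the elliptic estimate \eqref{H2-3D} applied to $\mu-\overline{\mu}=-\mathcal{G}_\phi\partial_t\phi$, which controls $\|\nabla\mu\|_{L^6(\Omega)}$ by $C(\|\phi\|_{H^2(\Omega)}^2\|\nabla\mu\|_{L^2(\Omega)}+\|\partial_t\phi\|_{L^2(\Omega)})$; and the weak-solution bounds \eqref{est-A-H2}, \eqref{est-phi-H2}, which bound $\|\Delta A(\phi)\|_{L^2(\Omega)}$ and $\|\phi\|_{H^2(\Omega)}$ by $C(\|\nabla\mu\|_{L^2(\Omega)}+1)$. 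Absorbing every occurrence of $\|\nabla\partial_t\phi\|_{L^2(\Omega)}^2$ into the dissipation, and setting $\mathcal{Y}(t):=\int_\Omega b(\phi)|\nabla\mu|^2\,\d x$, I expect to obtain a differential inequality $\ddt\mathcal{Y}(t)+\tfrac{a_m}{4}\|\nabla\partial_t\phi\|_{L^2(\Omega)}^2\le Q(\mathcal{Y}(t))$, where $Q$ is an increasing polynomial whose coefficients depend only on the structural parameters, on $m$ and on $E(\phi_0)$; $Q$ is genuinely super-linear (unlike in $d=2$, cf.\ \eqref{third_method}), and this is exactly what restricts the statement to short times.

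\textbf{Step 2: local bounds and construction.} Since $\mathcal{Y}(0)\le b_M\|\mu_0\|_{H^1(\Omega)}^2\le b_M M^2$ by \eqref{mu0}, comparison with the scalar Cauchy problem $z'=Q(z)$, $z(0)=b_M M^2$, yields $T_M>0$, depending only on $M$, $E(\phi_0)$ and $m$, with $\mathcal{Y}(t)\le 2b_MM^2+1$ on $[0,T_M]$. Using $\|\mu\|_{H^1(\Omega)}\le C(\|\nabla\mu\|_{L^2(\Omega)}+1)$ (cf.\ \eqref{est-mu-H1}) I get \eqref{strong est 2}; \eqref{est-phit-H-1} gives the $L^\infty$-in-time bound on $\partial_t\phi$; and integrating the differential inequality over $[0,T_M]$ gives $\partial_t\phi\in L^2(0,T_M;H^1(\Omega))$, that is \eqref{strong est 1}. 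Feeding $f=\mu+\theta_0\phi\in L^\infty(0,T_M;L^6(\Omega))$ into Lemma~\ref{ell2} bounds $\|F'(\phi)\|_{L^\infty(0,T_M;L^6(\Omega))}$, and then the elliptic problem \eqref{Aphi-ELL} for $A(\phi)$ together with \eqref{A2p} and \eqref{phi-W2p} yields $\phi\in L^\infty(0,T_M;W^{2,6}(\Omega))$, completing \eqref{SS-phi} and \eqref{strong est 3}. All these estimates can be reproduced inside the Faedo–Galerkin scheme of \cite{SP2013}, after the usual regularization of the singular potential $\Psi$; the approximate solutions are bounded on $[0,T_M]$ in the above norms, so a compactness and passage-to-the-limit argument produces $\phi$ solving \eqref{CH1}--\eqref{CH2} a.e.\ in $\Omega\times(0,T_M)$ with $\phi(\cdot,0)=\phi_0$.

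\textbf{Step 3: uniqueness, separation and $H^3$ regularity.} For uniqueness, given two strong solutions $\phi_1,\phi_2$ with the same datum, I would set $\Phi=\phi_1-\phi_2$ and test the difference of \eqref{CH2} by $\Phi$ as in Section~\ref{W-uniq}; the decisive point is that, in $d=3$, $\phi_i\in L^\infty(0,T_M;W^{2,6}(\Omega))\hookrightarrow L^\infty(0,T_M;W^{1,\infty}(\Omega))$, while $\|\partial_t\phi_1\|_{H^{-1}_{(0)}(\Omega)}$ and $\|\nabla\mu_2\|_{L^2(\Omega)}$ lie in $L^2(0,T_M)$, so that all nonlinear terms (including those of the type $I_1,I_2,I_3$, handled through the $H^2$ and $W^{2,6}$ elliptic estimates for $\mathcal{G}_{\phi_1}$) can be absorbed via the interpolation chain $H^{-1}_{(0)}(\Omega)\subset L^2(\Omega)\subset H^1(\Omega)$, leading to $\ddt\|\sqrt{b(\phi_1)}\nabla\mathcal{G}_{\phi_1}\Phi\|_{L^2(\Omega)}^2\le h(t)\|\sqrt{b(\phi_1)}\nabla\mathcal{G}_{\phi_1}\Phi\|_{L^2(\Omega)}^2$ with $h\in L^1(0,T_M)$, whence $\Phi\equiv0$ by Gronwall. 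For the last assertion, note that $\phi\in L^\infty(0,T_M;W^{2,6}(\Omega))$, $\partial_t\phi\in L^2(0,T_M;H^1(\Omega))$ and the embedding $W^{2,6}(\Omega)\hookrightarrow C^1(\overline{\Omega})$ make $t\mapsto\phi(t)$ Hölder continuous from $[0,T_M]$ into $C(\overline{\Omega})$, with constant controlled by $C_M$ (and hence, since \eqref{mu0} and Lemma~\ref{ell2} already force $\|\phi_0\|_{H^2(\Omega)}+E(\phi_0)\le C(M)$, by $M$ and $m$ alone); so if $\|\phi_0\|_{L^\infty(\Omega)}\le 1-\delta_0$ there is $\widetilde{T_M}\in(0,T_M]$, depending only on $\delta_0$ and $M$, such that $\|\phi(t)\|_{C(\overline{\Omega})}\le 1-\delta_0/2$ for $t\in[0,\widetilde{T_M}]$, which is \eqref{strong est 4}. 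On $[0,\widetilde{T_M}]$ the solution is strictly separated, so $\Psi'(\phi),\Psi''(\phi)\in L^\infty$; estimating the right-hand side of \eqref{Aphi-ELL} and its gradient as in \eqref{fdeltai} gives $f\in L^\infty(0,\widetilde{T_M};H^1(\Omega))$, hence $A(\phi)\in L^\infty(0,\widetilde{T_M};H^3(\Omega))$ by elliptic regularity, and reading $\phi$ from \eqref{Aphi-ELL-0-1} and estimating as in \eqref{h1}--\eqref{h2} yields $\phi\in L^\infty(0,\widetilde{T_M};H^3(\Omega))$.

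\textbf{Expected main difficulty.} The crux is Step~1: in three dimensions the nonlinear terms on the right-hand side of \eqref{first-rel}, in particular $\int_\Omega b'(\phi)\partial_t\phi|\nabla\mu|^2$, $\int_\Omega\frac{a'(\phi)}{2\sqrt{a(\phi)}}\Delta A(\phi)|\partial_t\phi|^2$ and $\int_\Omega a'(\phi)\nabla\phi\cdot\nabla\partial_t\phi\,\partial_t\phi$, cannot be absorbed into the dissipation with a bound that is merely linear in $\mathcal{Y}$; their control requires a careful interpolation of $\partial_t\phi$ between $H^{-1}_{(0)}(\Omega)$ and $H^1(\Omega)$ together with the three-dimensional $H^2$-estimate \eqref{H2-3D} for $\mathcal{G}_\phi$, and necessarily produces a super-linear differential inequality — which is precisely why the result is only local in time.
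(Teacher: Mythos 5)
Your proposal is correct and follows essentially the same route as the paper: testing by $\partial_t\mu$ to obtain the identity \eqref{first-rel-3D}, closing the three-dimensional estimates by interpolating $\partial_t\phi$ between $H^{-1}_{(0)}(\Omega)$ and $H^1(\Omega)$ so as to arrive at a super-linear differential inequality $\ddt\mathcal{Y}\leq C_0(1+\mathcal{Y}^2)\mathcal{Y}$, ODE comparison to fix $T_M$, elliptic bootstrapping for the $W^{2,6}$ and $H^3$ bounds, H\"older continuity of $t\mapsto\phi(t)$ in $C(\overline{\Omega})$ for the local separation, and a Gronwall argument for uniqueness. The only cosmetic deviation is in the uniqueness step, where the paper invokes the simpler chain rule for $\int_\Omega\G_{\phi_1}\partial_t\Phi\,\Phi\,\d x$ valid for strong solutions rather than the full weak-solution formula with the Hessian terms $I_1,I_2$, but both variants close under the available regularity.
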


\begin{proof}
First of all, we collect the basic energy estimates provided in Section \ref{S-WEAK}. Denoting by $C>0$ a generic constant only depending on $E(\phi_0)$ and $m$, the following estimates hold true:
\begin{equation}
\| \phi\|_{L^\infty( 0,\infty; H^1(\Omega))} \leq C, \quad
\int_0^\infty \| \nabla \mu (\tau) \|_{L^2(\Omega)}^2 \, \d \tau \leq C.
\end{equation}
Besides,
\begin{equation}
\label{est-mu-H1-3D-d}
\| \mu\|_{H^1(\Omega)}\leq C \| \nabla \mu\|_{L^2(\Omega)} + C,
\end{equation}
and
\begin{equation}
\label{est-A-H2-3D-d}
\| \phi\|_{H^2(\Omega)}^2  + \| A(\phi)\|_{H^2(\Omega)}^2 \leq C \| \nabla \mu\|_{L^2(\Omega)} + C,
\end{equation}
as well as
\begin{equation}
\label{est-phit-H-1-3D-d}
\| \partial_t \phi\|_{ H^{-1}_{(0)}(\Omega)} \leq C \| \nabla \mu\|_{L^2(\Omega)}.
\end{equation}

\medskip
We now perform higher-order energy estimates by relying on the assumption \eqref{mu0}. As in Section \ref{S-Strong}, this argument can be rigorously justified within the approximation procedure devised in \cite{SP2013} (see also \cite[Section 4]{GMT2019} for the approximation of the initial datum).
By reasoning as in the two dimensional case (cf. \eqref{first-rel}), we have
\begin{align}
&\frac{1}{2} \ddt  \int_{\Omega} b(\phi)\left|\nabla \mu \right|^2 \, \d x
+\int_\Omega a(\phi) |\nabla \partial_t \phi|^2 \, \d x
+ \int_\Omega \frac{(a'(\phi))^2}{4 a(\phi)} |\nabla \phi|^2 |\partial_t \phi|^2 \, \d x + \int_\Omega F''(\phi)|\partial_t \phi|^2 \, \d x
\notag
\\ 
&= \theta_0 \| \partial_t \phi\|_{L^2(\Omega)}^2+ \frac{1}{2}  \int_{\Omega} b'(\phi) \partial_t \phi  |\nabla \mu|^2 \, \d x + \int_\Omega \frac{a'(\phi)}{2\sqrt{a(\phi)}} \Delta A(\phi) |\partial_t \phi|^2 \, \d x
\notag 
\\
&\quad
-  \int_\Omega a'(\phi) \nabla \phi \cdot \nabla \partial_t \phi \, \partial_t \phi \, \d x.
\label{first-rel-3D}
\end{align}
We control the last two terms on the right hand side. By exploiting \eqref{lady_3}, \eqref{est-A-H2-3D-d} and \eqref{est-phit-H-1-3D-d},  we find
\begin{align*}
\left| \int_\Omega \frac{a'(\phi)}{2\sqrt{a(\phi)}} \Delta A(\phi) |\partial_t \phi|^2 \, \d x \right|
&
\leq \left\| \frac{a'(\phi)}{2\sqrt{a(\phi)}} \right\|_{L^\infty(\Omega)}
\| \Delta A(\phi)\|_{L^2(\Omega)} \| \partial_t \phi\|_{L^6(\Omega)}\| \partial_t \phi\|_{L^3(\Omega)}
\\
&\leq
C \| \Delta A(\phi)\|_{L^2(\Omega)}
 \norm{\nabla \partial_t \phi}_{L^2(\Omega)}^{\frac32}\| \partial_t \phi\|_{L^2(\Omega)}^\frac12
\\
&\leq
C \| \Delta A(\phi)\|_{L^2(\Omega)}
 \norm{\nabla \partial_t \phi}_{L^2(\Omega)}^\frac74
\| \partial_t \phi\|_{H^{-1}_{(0)}(\Omega)}^\frac14
\\
&\leq
\frac{a_m}{12} \norm{\nabla \partial_t \phi}_{L^2(\Omega)}^2 +
C \| A(\phi)\|_{H^2(\Omega)}^8  \| \nabla \mu\|_{L^2(\Omega)}^2
\\
& \leq
\frac{a_m}{12} \norm{\nabla \partial_t \phi}_{L^2(\Omega)}^2 +
C\left( 1+ \| \nabla \mu\|_{L^2(\Omega)}^4\right)
 \| \nabla \mu\|_{L^2(\Omega)}^2.
\end{align*}
Similarly, we also obtain
\begin{align*}
\left|  \int_\Omega a'(\phi) \nabla \phi \cdot \nabla \partial_t \phi \, \partial_t \phi \, \d x  \right|
&\leq
\| a'(\phi)\|_{L^\infty(\Omega)} 
 \| \nabla \phi\|_{L^6(\Omega)}\norm{\nabla \partial_t \phi}_{L^2(\Omega)}
\| \partial_t \phi\|_{L^3(\Omega)}
\\
&\leq
C  \|\phi\|_{H^2(\Omega)} \norm{\nabla \partial_t \phi}_{L^2(\Omega)}^\frac32 
\| \partial_t \phi\|_{L^2(\Omega)}^\frac12
\\
&\leq
C \| \phi\|_{H^2(\Omega)}  \norm{\nabla \partial_t \phi}_{L^2(\Omega)}^\frac{7}{4}
\| \partial_t \phi\|_{H^{-1}_{(0)}(\Omega)}^\frac14
\\
&
\leq
\frac{a_m}{12} \norm{\nabla \partial_t \phi}_{L^2(\Omega)}^2 +
C \| \phi\|_{H^2(\Omega)}^{8}  \| \nabla \mu\|_{L^2(\Omega)}^2
\\
& \leq
\frac{a_m}{12} \norm{\nabla \partial_t \phi}_{L^2(\Omega)}^2 +
C \left( 1+ \| \nabla \mu\|_{L^2(\Omega)}^4\right)
 \| \nabla \mu\|_{L^2(\Omega)}^2.
\end{align*}
We now control the nonlinear term containing $b'$. By Lebesgue interpolation, noticing that $\frac{5}{12}=\frac{a}{2}+\frac{1-a}{6}$ with $a=\frac34$, we have
\begin{align}
\left| \int_{\Omega} b'(\phi )\partial_t \phi |\nabla \mu |^2 \, \d x\right|
& \le C \norm{ \nabla\partial_t \phi }_{L^2(\Omega)}
\norm{\nabla \mu}_{L^\frac{12}{5}(\Omega)}^2
\notag
\\
&\leq \frac{a_m}{24}\norm{ \nabla\partial_t \phi }_{L^2(\Omega)}^2+C\norm{\nabla \mu}_{L^\frac{12}{5}(\Omega)}^4
\notag
\\
&\leq \frac{a_m}{24}\norm{ \nabla\partial_t \phi }_{L^2(\Omega)}^2+C\norm{\nabla \mu}_{L^2(\Omega)}^3\norm{\mu-\overline{\mu}}_{H^2(\Omega)}.
\label{third_method_2}
\end{align}
Owing to \cite[eq. (2.26)]{BB1999} and \eqref{est-phit-H-1-3D-d}, we know that
\begin{align*}
\| \mu - \overline{\mu}\|_{H^2(\Omega)}
&= \norm{\G_{\phi} \partial_t \phi}_{H^2(\Omega)}
\\
&\leq
C \left(
\| \phi\|_{H^2(\Omega)}^2 \| \nabla \mathcal{G}_{\phi} \partial_t \phi\|_{L^2(\Omega)}+ \| \partial_t \phi\|_{L^2(\Omega)} \right)
\\
&\leq C \left(  \| \phi\|_{H^2(\Omega)}^2 \| \nabla \mu\|_{L^2(\Omega)} + \| \nabla \mu\|_{L^2(\Omega)}^\frac12 \| \nabla \partial_t \phi\|_{L^2(\Omega)}^\frac12  \right)\!.
\end{align*}
Thus, we derive that
\begin{align*}
\left| \int_{\Omega} b'(\phi )\partial_t \phi |\nabla \mu |^2 \, \d x\right|
&\leq \frac{a_m}{24}\norm{ \nabla\partial_t \phi }_{L^2(\Omega)}^2
+C\| \phi\|_{H^2(\Omega)}^2\norm{\nabla \mu}_{L^2(\Omega)}^4
+C\| \nabla \mu\|_{L^2(\Omega)}^\frac72 \| \nabla \partial_t \phi\|_{L^2(\Omega)}^\frac12
\\
&\leq \frac{a_m}{12}\norm{ \nabla\partial_t \phi }_{L^2(\Omega)}^2
+C\| \phi\|_{H^2(\Omega)}^2\norm{\nabla \mu}_{L^2(\Omega)}^4+
C \| \nabla \mu\|_{L^2(\Omega)}^\frac{14}{3}
\\
&\leq \frac{a_m}{12}\norm{ \nabla\partial_t \phi }_{L^2(\Omega)}^2+
C \left( 1+ \| \nabla \mu\|_{L^2(\Omega)}^4\right)
 \| \nabla \mu\|_{L^2(\Omega)}^2.
\end{align*}
Lastly, we recall that
$$
\theta_0 \|\partial_t \phi \|_{L^2(\Omega)}^2 
\leq \frac{a_m}{4} \| \nabla \partial_t \phi\|_{L^2(\Omega)}^2 +
C \| \nabla \mu\|_{L^2(\Omega)}^2.
$$
In light of \eqref{m-ndeg}, collecting all the above inequalities in \eqref{first-rel-3D},  and setting $\mathcal{Y}(t):=\int_{\Omega} b(\phi(t))\left|\nabla \mu(t) \right|^2 \, \d x$, we end up with
\begin{align}
\label{final-3D}
& \ddt  \mathcal{Y}(t)
 + a_m
 \norm{\nabla \partial_t \phi}_{L^2(\Omega)}^2
\leq C_0\left(1+  \mathcal{Y}^2(t)
 \right) \mathcal{Y}(t)
\end{align}
for some $C_0$, depending only $E(\phi_0)$, $m$ and the parameters of the system.
By classical ODE's comparison principles (see, e.g., \cite[Lemma II.4.12]{BF2013}), we conclude that
\begin{equation*}
\sup_{0 \leq t \leq T} \|\nabla \mu (t)\|_{L^2(\Omega)}^2
\leq  \frac{1}{b_m} \frac{\mathcal{Y}(0)}{\sqrt{\left( 1+\mathcal{Y}^2(0) \right) \mathrm{e}^{-2C_0 T} -\mathcal{Y}^2(0) }},
\end{equation*}
where
$$
0 < T < T^\star:= \frac{1}{2C_0} \ln \left( 1 + \frac{1}{\mathcal{Y}^2(0)}\right).
$$
Since $
\mu(0)= -\div( a(\phi_0) \nabla \phi_0) + a'(\phi_0) \frac{|\nabla \phi_0|^2}{2}+\Psi'(\phi_0)
$ and \eqref{mu0}, we observe that 
$$
\mathcal{Y}(0)\leq b_M \| \nabla \mu_0\|_{L^2(\Omega)}^2\leq b_M M^2.
$$
Noticing that the function $g: [0,b_M M^2] \to \mathbb{R}$, $s \mapsto (1+s^2)\mathrm{e}^{- 2C_0 T} - s^2$, which is decreasing, and so it has its minimum value at $b_M M^2$. Therefore, setting
$$
0<T_M:= \frac{1}{2C_0} \ln \left( 1+ \frac{3}{1+4b_M^2 M^4}\right) < T^\star,
$$
we derive that
\begin{equation}
\label{mu-TM}
\sup_{0 \leq t \leq T_M} \|\nabla \mu (t)\|_{L^2(\Omega)}^2
\leq
\frac{2 b_M M^2}{b_m}.
\end{equation}

Now, it directly follows from \eqref{est-mu-H1-3D-d} and \eqref{mu-TM} that
\begin{equation}
\label{Glob_muH1_4-3D}
\| \mu\|_{L^\infty(0, T_M; H^1(\Omega))} \leq C_M.
\end{equation}
As a consequence, we learn from \eqref{est-phit-H-1-3D-d} that
\begin{equation}
\label{phit-inf-3D}
 \| \partial_t \phi \|_{L^\infty(0,T_M;  H^{-1}_{(0)}(\Omega))}\leq C_M.
\end{equation}
Then, integrating \eqref{final-3D} on the time interval $[0,T_M]$, we deduce that
\begin{equation}
\label{phit-H1-3D}
 \int_0^{T_M} \| \nabla \partial_t \phi (s)\|_{L^2(\Omega)}^2 \, \d s
\leq C_M.
\end{equation}
On the other hand, by \eqref{est-A-H2-3D-d}, it is easily seen that
\begin{equation}
\label{phiH2-3D}
\| \phi\|_{L^\infty(0,T_M; H^2(\Omega))} \leq C_M.
\end{equation}
Furthermore, by reasoning as in Section \ref{S-WEAK} (cf. \eqref{F'-LP}, \eqref{A2p} and \eqref{phi-W2p}) and exploiting the uniform bounds on $[0,T_M]$, we conclude that
$$\| F'(\phi)\|_{L^\infty(0,T_M; L^6(\Omega))}\leq C_M,$$
and
\begin{equation}
\label{Aphi-W26}
\| A(\phi)\|_{L^\infty(0,T_M; W^{2,6}(\Omega))}+\| \phi\|_{L^\infty(0,T_M; W^{2,6}(\Omega))}
\leq C_M.
\end{equation}

\bigskip
\textbf{Local separation property.}
Next, we consider an initial datum $\phi_0$ satisfying 
$$
\| \phi_0\|_{L^\infty(\Omega)}\leq 1- \delta_0,
$$ 
and we prove the separation property \eqref{strong est 4} on an interval $[0,\widetilde{T_M}]$, where $\widetilde{T_M}$ is possibly smaller than $T_M$, depending on $\delta_0$ but independent of the particular initial datum. 
First of all, we observe that $\phi\in BC_{\rm w}([0,T_M]; W^{2,6}(\Omega))$ and $\| \phi(t)\|_{W^{2,6}(\Omega)} \leq C_M$, for any $t\in [0,T_M]$. Therefore, for any pair $t_1<t_2\in [0,T_M]$, owing to the uniform bound
\eqref{phit-H1-3D}, we have
\begin{align*}
\|\phi(t_1)-\phi(t_2)\|_{W^{1,4}(\Omega)}&\leq \|\phi(t_1)-\phi(t_2)\|_{H^1(\Omega)}^\frac{1}{4}\|\phi(t_1)-\phi(t_2)\|_{W^{1,6}(\Omega)}^{\frac{3}{4}}\\
&\leq C_M\left(\int_{t_1}^{t_2}\|\partial_t\phi\|_{H^1(\Omega)}\right)^\frac{1}{4}\\
&\leq C_M\left(\int_{0}^{T_M}\|\partial_t\phi\|_{H^1(\Omega)}^2\right)^{1/8}(t_2-t_1)^\frac{1}{8}\\
&\leq C_M|t_2-t_1|^\frac{1}{8}.
\end{align*}
In particular, by the embedding $W^{1,4}(\Omega) \hookrightarrow C(\overline{\Omega})$, and by choosing  $t_1=t\in [0,T_M]$ and $t_2=0$, we have
$$
\|\phi(t)\|_{C(\overline{\Omega})}
\leq \|\phi_0\|_{C(\overline{\Omega})}
+C_M t^\frac18
\leq 1-\delta_0+C_M t^\frac18.
$$
Thus, the desired estimate \eqref{strong est 4} follows by setting $\widetilde{T_M}=\min\{T_M,\frac{\delta_0^8 }{2^8C_M^8}\}$.
\bigskip

\textbf{$H^3$-estimate.}
We now go back to the elliptic problem \eqref{Aphi-ELL} with right-hand side
$$
f= \frac{1}{\sqrt{a(\phi)}} \left( \mu- \Psi'(\phi) \right) + A(\phi)\in L^\infty(0,T_M; L^2(\Omega)).
$$
In light of \eqref{fdeltai}, noticing that $\| \Psi''(\phi(t))\|_{L^\infty(\Omega)}\leq C_{\delta_0}$ for $t\in [0, \widetilde{T_M}]$ in light of the separation property \eqref{strong est 4}, by exploiting \eqref{Glob_muH1_4-3D} and \eqref{Aphi-W26}, we deduce that
$$
\| f \|_{L^\infty(0,\widetilde{T_M}; H^1(\Omega))}\leq C(M,\delta_0).
$$
Therefore, by the elliptic regularity theory for problem \eqref{Aphi-ELL}, we obtain
$$
\| A(\phi) \|_{L^\infty(0,\widetilde{T_M}; H^3(\Omega))} \leq C(M,\delta_0).
$$
Finally, by considering once again the elliptic problem \eqref{Aphi-ELL-0-1}, 
and exploiting the estimate \eqref{h2} with $u=\phi$, it is easily seen that
\begin{equation}
\label{fiH3}
\| \phi\|_{L^\infty(0,\widetilde{T_M}; H^3(\Omega))}\leq C(M,\delta_0).
\end{equation}
\medskip

\textbf{Uniqueness.} Let $\phi_{1}^0$ and $\phi_2^0$ be two initial data such that $\overline{\phi_{1}^0}=\overline{\phi_{2}^0}$. We consider two corresponding local strong solutions $\phi_1$ and $\phi_2$  to problem \eqref{CH1}-\eqref{CH-bc} defined on a common interval $[0,T_M]$.
Setting $\Phi=\phi_1-\phi_2$, arguing as in Section \ref{W-uniq}, we find the identity
 \begin{align*}
& \int_\Omega a(\phi_1) |\nabla \Phi |^2 \, \d x
 +
  \int_\Omega \left( F'(\phi_1)-F'(\phi_2) \right)   \Phi \, \d x
- \int_{\Omega} \left( \mu_1-\mu_2\right)  \Phi \, \d x
\\
&=
 \theta_0 \norm{\Phi}_{L^2(\Omega)}^2
 - \int_\Omega \left( a(\phi_1)-a(\phi_2)\right) \nabla \phi_2 \cdot \nabla \Phi \, \d x
 \\
 &\quad -\int_\Omega \frac{a'(\phi_1)}{2}  \left( |\nabla \phi_1|^2-|\nabla \phi_2|^2\right) \Phi
 \, \d x
 - \int_\Omega \frac12  \left( a'(\phi_1)-a'(\phi_2)\right) | \nabla \phi_2|^2 \Phi \, \d x.
\end{align*}
We observe that
\begin{align*}
- \int_{\Omega} \left( \mu_1-\mu_2\right)  \Phi \, \d x
&=
 \int_\Omega \left( \G_{\phi_1}  \partial_t \phi_1 - \G_{\phi_2} \partial_t \phi_2  \right)  \Phi \, \d x
 \\
 &=
\int_\Omega  \G_{\phi_1} \partial_t \Phi  \, \Phi \, \d x
- \int_\Omega (b(\phi_2)-b(\phi_1)) \nabla \mu_2 \cdot \nabla \G_{\phi_1} \Phi \, \d x.
\end{align*}
Next, thanks to the regularity properties of strong solutions, we notice (cf. \cite[Eqn. (2.22)]{BB1999}) that the following chain rule holds true:
\begin{align}
\int_\Omega \G_{\phi_1} \partial_t \Phi \,  \Phi \, \d x
&= \ddt \frac{1}{2} \norm{\sqrt{b(\phi_1)}\nabla \G_{\phi_1} \Phi}_{L^2(\Omega)}^2
+\frac12 \int_\Omega \partial_t \phi_1 b'(\phi_1) \nabla \G_{\phi_1} \Phi \cdot \nabla \G_{\phi_1} \Phi\,\d x.
\notag
\end{align}
Therefore, we end up with the following differential equality
\begin{align}
&\ddt \frac{1}{2} \norm{\sqrt{b(\phi_1)}\nabla \G_{\phi_1} \Phi}_{L^2(\Omega)}^2
+ \int_\Omega a(\phi_1) |\nabla \Phi |^2 \, \d x
 +
  \int_\Omega \left( F'(\phi_1)-F'(\phi_2) \right)   \Phi \, \d x
  \notag
\\
&=   \theta_0 \norm{\Phi}_{L^2(\Omega)}^2
 - \int_\Omega \left( a(\phi_1)-a(\phi_2)\right) \nabla \phi_2 \cdot \nabla \Phi \, \d x
 \notag
 \\
 &\quad -\int_\Omega \frac{a'(\phi_1)}{2}  \left( |\nabla \phi_1|^2-|\nabla \phi_2|^2\right) \Phi
 \, \d x
 - \int_\Omega \frac12  \left( a'(\phi_1)-a'(\phi_2)\right) | \nabla \phi_2|^2 \Phi \, \d x
 \notag
 \\
 &\quad
 -\frac12 \int_\Omega \partial_t \phi_1 b'(\phi_1) \nabla \G_{\phi_1} \Phi \cdot \nabla \G_{\phi_1} \Phi\,\d x
  + \int_\Omega (b(\phi_2)-b(\phi_1)) \nabla \mu_2 \cdot \nabla \G_{\phi_1} \Phi \, \d x.
  \label{DE-3D}
   \end{align}
Let us now proceed by estimating the terms on the right-hand side. It is useful to recall the following control from Proposition \ref{NP-nc}
$$
\| \G_{\phi_1} \Phi\|_{H^2(\Omega)}
\leq C \| \Phi\|_{L^2(\Omega)}.
$$
By exploiting the interpolation in Lebesgue spaces, we obtain
\begin{align*}
-\frac12 \int_\Omega \partial_t \phi_1 b'(\phi_1) \nabla \G_{\phi_1} \Phi \cdot \nabla \G_{\phi_1} \Phi\,\d x
&\leq C\|\partial_t\phi_1\|_{L^6(\Omega)}\|\nabla \G_{\phi_1} \Phi\|_{L^{\frac{12}{5}}(\Omega)}^2\\
&\quad\leq  C \|\partial_t\phi_1\|_{L^6(\Omega)}
\|\nabla \G_{\phi_1} 
\Phi\|_{L^2(\Omega)}^\frac32
\|\G_{\phi_1} \Phi\|_{H^2(\Omega)}^\frac12\\
&\quad \leq \frac{a_m}{16}\| \nabla \Phi\|_{L^2(\Omega)}^2
+C\|\partial_t\phi_1\|_{L^6(\Omega)}^\frac43
\|\nabla \G_{\phi_1} \Phi\|_{L^2(\Omega)}^2.
\end{align*}
Besides, since $\mu_2\in L^\infty(0,T_M;H^1(\Omega))$, the last term on the right-hand side of \eqref{DE-3D} is estimated as follows
\begin{align*}
\int_\Omega (b(\phi_2)-b(\phi_1)) \nabla \mu_2 \cdot \nabla \G_{\phi_1} \Phi \, \d x
 &\leq  \norm{\nabla \mu_2}_{L^2(\Omega)} \norm{b(\phi_1)-b(\phi_2)}_{L^6(\Omega)}
  \norm{\nabla \mathcal{G}_{\phi_1}\Phi}_{L^3(\Omega)}
\\
&\leq
C \| \nabla\Phi\|_{L^2(\Omega)} \|\nabla \G_{\phi_1} \Phi\|_{L^2(\Omega)}^\frac12\|\G_{\phi_1} \Phi\|_{H^2(\Omega)}^\frac12\\
&\leq
C \| \nabla\Phi\|_{L^2(\Omega)}^\frac32 \|\nabla \G_{\phi_1} \Phi\|_{L^2(\Omega)}^\frac12\\
 &\leq \frac{a_m}{16}  \| \nabla \Phi\|_{L^2(\Omega)}^2
 + C\| \nabla \mathcal{G}_{\phi_1} \Phi\|_{L^2(\Omega)}^2.
 \end{align*}
By exploiting \eqref{lady_3} and the regularity $\phi_1,\phi_2\in L^\infty(0,T_M; W^{2,6}(\Omega))$, we control the terms containing the nonlinear diffusion $a$ as 
\begin{align*}
\left|  - \int_\Omega \left( a(\phi_1)-a(\phi_2)\right) \nabla \phi_2 \cdot \nabla \Phi \, \d x \right|
&\quad\leq \| a(\phi_1)-a(\phi_2)\|_{L^3(\Omega)} \| \nabla \phi_2\|_{L^6(\Omega)}
\| \nabla \Phi\|_{L^2(\Omega)}
\\
&\quad\leq C \| \Phi\|_{L^3(\Omega)}
\| \nabla \Phi\|_{L^2(\Omega)}
\\
&\quad\leq C \| \Phi\|_{H^{-1}_{(0)}(\Omega)}^\frac14
 \| \nabla \Phi\|_{L^2(\Omega)}^\frac74
\\
&\quad \leq \frac{a_m}{16}\| \nabla \Phi\|_{L^2(\Omega)}^2
+C  \| \Phi\|_{H^{-1}_{(0)}(\Omega)}^2,
\end{align*}
and
\begin{align*}
\left| -\int_\Omega \frac{a'(\phi_1)}{2}  \left( |\nabla \phi_1|^2-|\nabla \phi_2|^2\right)\Phi
\, \d x\right|
&\leq \frac12 \| a'(\phi_1)\|_{L^\infty(\Omega)}\|\nabla\phi_1+\nabla\phi_2\|_{L^6(\Omega)}  \| \nabla \Phi\|_{L^2(\Omega)} \| \Phi\|_{L^3(\Omega)}
\\
&\leq C\|\Phi\|_{H^{-1}_{(0)}(\Omega)}^{\frac14}\|\nabla \Phi\|^{\frac74}_{L^2(\Omega)}
\\
&\leq \frac{a_m}{16}\|\nabla \Phi\|^{2}_{L^2(\Omega)}+C\|\Phi\|_{H^{-1}_{(0)}(\Omega)}^{2}.
 \end{align*}
Besides, we have
 \begin{align*}
\left| - \int_\Omega \frac12  \left( a'(\phi_1)-a'(\phi_2)\right) | \nabla \phi_2|^2 \Phi \, \d x\right|
&\leq C\int_\Omega   | \nabla \phi_2|^2 \Phi^2 \, \d x
\\
&\leq C \| \nabla \phi_2\|_{L^6(\Omega)}^2
\| \Phi\|_{L^3(\Omega)}^2
\\
&\leq C
\|\Phi\|_{H^{-1}_{(0)}(\Omega)}^{\frac12}
\|\nabla \Phi\|^{\frac32}
\\
&\leq \frac{a_m}{16}\|\nabla \Phi\|^{2}_{L^2(\Omega)}+C\|\Phi\|_{H^{-1}_{(0)}(\Omega)}^{2}.
\end{align*}
Collecting all the estimates above, observing that $(F'(\phi_1)-F'(\phi_2), \phi_1-\phi_2) \geq 0$, $a(\phi_1)\geq a_m>0$, and recalling that
$\|\Phi\|_{H^{-1}_{(0)}(\Omega)}$ is equivalent to $\norm{\sqrt{b(\phi_1)} \nabla \G_{\phi_1} \Phi}_{L^2(\Omega)}$, we deduce that
\begin{equation}
\label{diff-rel-3}
\ddt  \norm{\sqrt{b(\phi_1)}\nabla \G_{\phi_1} \Phi}_{L^2(\Omega)}^2
\leq h(t) \norm{ \sqrt{b(\phi_1)} \nabla \G_{\phi_1} \Phi}_{L^2(\Omega)}^2
\! ,
\end{equation}
where $$
h(\cdot)=C\left( 1+\|\partial_t\phi_1\|_{L^6(\Omega)}^\frac43\right)\in L^1(0,T_M).
$$ 
Therefore, the uniqueness follows by the Gronwall lemma.
\end{proof}

\section{Lyapunov Stability and Longtime Behavior in three dimensions}
\label{S-Lyapunov}
\setcounter{equation}{0}

%

In this section we show that if the initial datum $\vp_0$ is sufficiently close to a local minimizer of the energy functional $E$,  then the local strong solution provided by Theorem \ref{CHstrong-solution} is indeed a global one, and $\vp$ will stay close to that minimizer for all $t \geq 0$. Finally, we investigate the longtime behavior of such global solutions as $t\to+\infty$.
We start by providing a characterization of energy minimizers.

\subsection{Energy minimizers}
In the sequel, let $m\in (-1,1)$ be fixed. We consider local minimizers $\psi$ of the total energy $E$ subject to the mass constraint, namely 
$\psi\in \mathcal{V}_m$ such that
$$
E(\psi) \leq E(u), \quad \forall \, u\in \mathcal{V}_m\ : \ \|u-\psi\|_{H^1(\Omega)}<\nu,
$$
for some $\nu>0$. 
As a first result, we show the connection between local minimizers and the stationary system  \eqref{SSS-i}-\eqref{SSS-ii}.
\begin{lemma}
\label{chmini}
Let $\psi\in \mathcal{V}_m$ be a local energy minimizer of $E$ on $\mathcal{V}_m$. Then, $\psi \in \mathcal{S}_m$. 
\end{lemma}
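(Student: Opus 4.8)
The plan is to follow the classical route for local minimizers of Cahn--Hilliard type energies with singular potential: derive the Euler--Lagrange relation as a variational inequality, upgrade it to the stationary equation, and then bootstrap regularity by means of Section~\ref{s-stationary}. Since $\mathcal{V}_m$ is convex, for any $\xi\in\mathcal{V}_m$ the segment $\psi_s=(1-s)\psi+s\xi$ lies in $\mathcal{V}_m$, so $E(\psi_s)\geq E(\psi)$ for $s$ small. The gradient part $s\mapsto\int_\Omega\tfrac{a(\psi_s)}{2}|\nabla\psi_s|^2\,\d x$ and the quadratic part of $\Psi$ are differentiable at $s=0^+$, while for the logarithmic part one exploits convexity of $F$: the difference quotients $\tfrac1s\big(F(\psi_s)-F(\psi)\big)$ are monotone in $s$ and dominated by $F(\xi)-F(\psi)\in L^1(\Omega)$, so monotone convergence yields
\[
0\le\int_\Omega\Big(a(\psi)\nabla\psi\cdot\nabla(\xi-\psi)+\tfrac{a'(\psi)}{2}|\nabla\psi|^2(\xi-\psi)+\Psi'(\psi)(\xi-\psi)\Big)\,\d x,\qquad\forall\,\xi\in\mathcal{V}_m,
\]
where the $F'(\psi)(\xi-\psi)$ contribution is a priori only in $[-\infty,+\infty)$; finiteness of the right-hand side forces it to be integrable, and choosing $\xi\equiv m$ gives $\int_\Omega F'(\psi)(\psi-m)\,\d x<\infty$, hence $F'(\psi)\in L^1(\Omega)$ by \eqref{MZ}, and in particular $|\psi|<1$ a.e.

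To pass from the variational inequality to the equation, fix $w\in H^1(\Omega)\cap L^\infty(\Omega)$ with $\overline{w}=0$ and test with $\xi=\psi\pm s\,w_j\in\mathcal{V}_m$ (admissible for $s$ small, depending on $j$), where $w_j$ is a mass-preserving truncation of $w$ supported where $\psi$ is separated from $\pm1$: one may take $w_j=\eta_j(\psi)\,w-\big(\int_\Omega\eta_j(\psi)w\,\d x\big)\,\chi_j$, with $\eta_j\in C^\infty(\R)$ satisfying $\eta_j\equiv1$ on $[-1+\tfrac2j,1-\tfrac2j]$ and $\supp\eta_j\subset(-1+\tfrac1j,1-\tfrac1j)$, and $\chi_j=\big(\int_\Omega\eta_j(\psi)\,\d x\big)^{-1}\eta_j(\psi)$. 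Since $w_j$ vanishes where $|\psi|$ is close to $1$, both perturbations lie in $\mathcal{V}_m$, and the inequality yields $\int_\Omega\big(a(\psi)\nabla\psi\cdot\nabla w_j+\tfrac{a'(\psi)}{2}|\nabla\psi|^2w_j+\Psi'(\psi)w_j\big)\,\d x=0$. Letting $j\to\infty$, using $F'(\psi)\in L^1(\Omega)$ and the monotonicity of $F'$ to handle the cut-off layer, one concludes that $\psi$ solves \eqref{SSS-i}--\eqref{SSS-ii} weakly against $H^1(\Omega)\cap L^\infty(\Omega)$ test functions, with multiplier $\lambda=\overline{\tfrac{a'(\psi)}{2}|\nabla\psi|^2+\Psi'(\psi)}$ and natural condition $\partial_\n\psi=0$.

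For the regularity, by \eqref{div-Aphi} the weak equation reads $-\Delta A(\psi)+\tfrac{F'(\psi)}{\sqrt{a(\psi)}}=\tfrac{f}{\sqrt{a(\psi)}}$ in the weak sense, with $f:=\lambda+\theta_0\psi\in L^\infty(\Omega)$. This is precisely the situation of Lemma~\ref{ell2}, whose proof only uses the weak formulation and the monotonicity of $F'$ (via $\nabla A(\psi)\cdot\nabla\psi=\sqrt{a(\psi)}|\nabla\psi|^2\ge0$); running that argument with $f\in L^p(\Omega)$ and letting $p\to\infty$ gives $F'(\psi)\in L^\infty(\Omega)$, so $\|\psi\|_{L^\infty(\Omega)}\le1-\delta$ for some $\delta\in(0,1)$. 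Then $-\Delta A(\psi)\in L^2(\Omega)$, hence $A(\psi)\in H^2(\Omega)$ by elliptic regularity, and \eqref{phi-Aphi-rel} together with the embedding $H^2(\Omega)\hookrightarrow W^{1,4}(\Omega)$ yields $\psi\in H^2(\Omega)\hookrightarrow C(\overline{\Omega})$, so the separation holds everywhere. Thus $\psi$ satisfies \eqref{SSS-i}--\eqref{SSS-ii} with $\overline{\psi}=m$ and $\Psi'(\psi)\in L^\infty(\Omega)\subset L^2(\Omega)$, i.e. $\psi\in\mathcal{S}_m$; and $\psi\in H^3(\Omega)$ follows from Proposition~\ref{esci}.

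The main difficulty is the second step: the constraint $|\psi|\le1$ forbids arbitrary two-sided perturbations, so the truncation $w_j$ is needed, and the delicate point is passing to the limit $j\to\infty$ while controlling the transition layer $\{1-\tfrac2j<|\psi|<1-\tfrac1j\}$ (where $\eta_j'(\psi)$ is of size $\sim j$), which is exactly where the monotonicity of $F'$ and the bound $F'(\psi)\in L^1(\Omega)$ are used.
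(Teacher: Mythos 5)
Your route is genuinely different from the paper's: the paper never touches the Euler--Lagrange variational inequality. Instead it runs the \emph{viscous} Cahn--Hilliard flow \eqref{CH1e}--\eqref{CH3e} from the initial datum $\varphi_0=\psi$, uses the energy equality and the continuity of $t\mapsto\varphi(t)$ in $H^1(\Omega)$ from \cite{SP2013} to see that the trajectory stays in the minimizing neighbourhood for a short time, and then the combination $E(\varphi(t))\ge E(\psi)$ with the dissipation identity forces $\partial_t\varphi\equiv 0$ and $\nabla\mu\equiv 0$; hence $\varphi\equiv\psi$ is a stationary point, and $\psi\in H^2(\Omega)$ is inherited from the regularity $\varphi\in L^2_{\loc}(H^2(\Omega))$ of the flow. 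This sidesteps entirely the passage from a variational inequality to the stationary equation under the constraint $|\psi|\le 1$ with the singular potential.

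That passage is exactly where your argument has a gap. Testing with $\xi=\psi\pm s\,w_j$, $w_j=\eta_j(\psi)w-(\int_\Omega\eta_j(\psi)w\,\d x)\chi_j$, the gradient term produces
\begin{equation*}
\int_\Omega a(\psi)\,\nabla\psi\cdot\nabla w_j\,\d x
=\int_\Omega a(\psi)\,\eta_j(\psi)\,\nabla\psi\cdot\nabla w\,\d x
+\int_\Omega a(\psi)\,\eta_j'(\psi)\,w\,|\nabla\psi|^2\,\d x+\cdots,
\end{equation*}
and the second integral (together with the analogous one coming from $\nabla\chi_j$) is supported on the layer $\{1-\tfrac2j<|\psi|<1-\tfrac1j\}$ with $|\eta_j'(\psi)|\sim j$ there. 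The tools you invoke --- $F'(\psi)\in L^1(\Omega)$ and the monotonicity of $F'$ --- control the potential term $\int_\Omega F'(\psi)w_j\,\d x$ by dominated convergence, but they say nothing about $j\int_{\{1-2/j<|\psi|<1-1/j\}}|\nabla\psi|^2\,\d x$: knowing only $\psi\in H^1(\Omega)$ and $|\{|\psi|=1\}|=0$, this quantity need not tend to zero (not even along a subsequence; the natural summation over dyadic layers gives $\int|\nabla\psi|^2/(1-|\psi|)\,\d x$, which may be infinite), and no sign cancellation is available since the perturbation is two-sided. So the claimed limit identity $\int_\Omega(a(\psi)\nabla\psi\cdot\nabla w+\tfrac{a'(\psi)}{2}|\nabla\psi|^2w+\Psi'(\psi)w)\,\d x=0$ is not established; the extra regularity of $\psi$ that would tame the layer term is only obtained \emph{after} one has the equation. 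Your first step (the variational inequality and $F'(\psi)\in L^1(\Omega)$ via \eqref{MZ}) and your final regularity bootstrap are fine, but the core step needs either a different perturbation scheme (e.g.\ a Yosida/approximation argument for $F$) or the paper's dynamic argument.
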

\begin{proof}
Let $\eps>0$ and consider the viscous Cahn--Hilliard equation with nonlinear diffusion
\begin{align}
 \label{CH1e}
\partial_t \varphi &= \Delta \mu,\\
 \label{CH2e}
\mu &= -\div \left( a(\varphi) \nabla \varphi \right) + a'(\varphi) \frac{|\nabla \varphi|^2}{2}+\Psi'(\varphi)+\eps \partial_t \varphi
\end{align}
in $\Omega \times (0,\infty)$, equipped with the boundary and initial conditions
\begin{equation}
\label{CH3e}
\partial_\n\varphi= \partial_\n \mu=0 \quad \text{on }  \partial \Omega \times (0,\infty),
\qquad  \varphi|_{t=0}=\varphi_0 \quad \text{in } \Omega.
\end{equation}
It has been proved in \cite{SP2013} (see Theorem 5.1 and Proposition 6.3) that for both $d=2,3$ and any $\varphi_0 \in H^1(\Omega)$ with $\Psi(\varphi_0)\in L^1(\Omega)$ and
$\overline{\varphi}_0\in (-1,1)$, the problem \eqref{CH1e}-\eqref{CH3e} admits a global weak solution $\varphi(t)$, which satisfies the energy equality
$$
E(\varphi(t))=E(\varphi_0)
-\int_0^t \left( \|\nabla \mu(\tau) \|_{L^2(\Omega)}^2+\eps\|\partial_t \varphi(\tau)\|_{L^2(\Omega)}^2 \right) \, \d \tau.
$$
In particular, this implies that the map $t\mapsto E(\varphi(t))$ is continuous on $[0,\infty)$. At this point, it is immediate to deduce that $\varphi\in C([0,\infty); H^1(\Omega))$ (see, e.g., \cite[pg. 482]{ABELS2009}). Now, let $\varphi_0=\psi$, where $\psi$ is a local minimizer of $E$ in $\mathcal{V}_m$ and let $\varphi(t)$ be a weak solution to the viscous problem \eqref{CH1e}-\eqref{CH3e}  departing from $\psi$. Then, by the continuity of the trajectory, there exists $\delta>0$ such that $\|\varphi(t)-\psi\|_{H^1(\Omega)}< \nu$ whenever $t\in [0,\delta)$.
Exploiting the energy equality and the fact that $E(\varphi(t))\geq E(\psi)$ on $[0,\delta)$, we have that
$$
E(\psi)\leq E(\varphi(t))=E(\psi) -\int_0^t \left( \|\nabla \mu(\tau) \|_{L^2(\Omega)}^2+\eps\|\partial_t\varphi(\tau)\|_{L^2(\Omega)}^2 \right)\, \d \tau,
$$
namely 
$$
\int_0^t \left( \|\nabla \mu(\tau) \|_{L^2(\Omega)}^2+\eps\|\partial_t\varphi(\tau)\|^2_{L^2(\Omega)} \right) \, \d \tau\leq 0, \quad \forall \, t \in [0,\delta).
$$
This implies that $\partial_t\varphi=0$ for a.e. $t\in [0,\delta)$. That is, $\varphi(t)\equiv \psi$ for any $t\in [0,\delta)$.
As a result, since $\varphi\in L^2([0,\infty);H^2(\Omega))$, we deduce that  $\psi\in H^2(\Omega)$. Besides, $\|\nabla \mu\|_{L^2(\Omega)}=0$  a.e. $t\in [0,\delta)$, meaning that $\mu$ is constant. Therefore, $\psi$ is a solution to the stationary system \eqref{SSS-i}-\eqref{SSS-ii} and belongs to $\mathcal{S}_m$.
\end{proof}

Let $\psi$ be a local minimizer. Since $\psi\in \mathcal{S}_m$, as a consequence of Proposition \ref{esci} we know that $\psi$ is strictly separated from the pure states, namely $\|\psi\|_{L^\infty(\Omega)}<1$. Moreover, we shall exhibit an explicit $L^\infty$-bound for $\psi$ that, besides $m$, only depends on the shape of the nonlinearity $\Psi$. This is done in the spirit of \cite{GM}. In particular, if $m$ is sufficiently close to $\pm 1$, the only minimizer of $E$ is the constant function $\psi\equiv m$.

\smallskip
\noindent
Let $\alpha_0 \in (-1,0)$, $\beta_0\in (0,1)$ be such that $\Psi'(\alpha_0)= \displaystyle \min_{(0,1)} \Psi'$ and $\Psi'(\beta_0)= \displaystyle \max_{(-1,0)} \Psi'$ as
in the figure below:
\begin{center}
\includegraphics[width=0.4\textwidth]{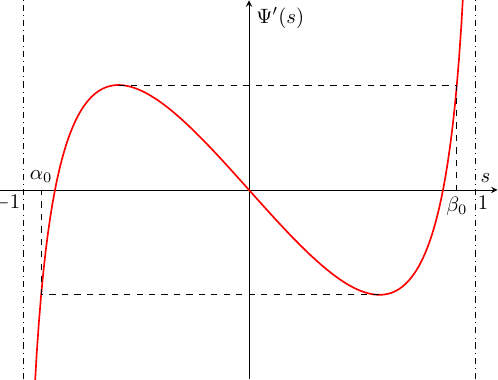}
\end{center}
Observe that there exist numbers $a,b\in (-1,1)$, $a \leq m \leq b$ such that
\begin{align}
\label{A3}
&\Psi'(r) < \Psi'(s) \quad \forall \, r \in (-1,m), \quad \forall\, s \in (b,1),
\\
\label{A3-2}
&\Psi'(r) < \Psi'(s) \quad  \forall \, r \in (-1,a), \quad \forall\, s \in (m,1).
\end{align}
It is easily seen that proper choices are
\begin{align*}
&a = b = m \, \text{ when }\, m \leq \alpha_0 \text{ or } m\geq \beta_0,
\\
&a=\alpha_0,b=\beta_0\,\text{ when }\, \alpha_0 < m < \beta_0.
\end{align*}

\begin{lemma}
\label{l-matano}
Let $\psi$ be a local minimizer of $E$ on $\mathcal{V}_m$. 
Then,
$$
a\leq \psi(x)\leq b\quad \text{for any } x\in \Omega.
$$
In particular, if  $m\leq \alpha_0$ or $m\geq \beta_0$ the only local minimizer is the constant $u(x)\equiv m$.
\end{lemma}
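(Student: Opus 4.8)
The plan is to establish the pointwise bound $a \leq \psi(x) \leq b$ by a truncation (obstacle) argument exploiting the minimality of $\psi$ and the ordering properties \eqref{A3}--\eqref{A3-2} of $\Psi'$. Since $\psi \in \mathcal{S}_m$ by Lemma \ref{chmini}, we already know $\psi \in H^2(\Omega) \cap C(\overline\Omega)$ and that $\psi$ is strictly separated from $\pm 1$ by Proposition \ref{esci}; in particular all quantities below are well defined and $\Psi$ is smooth on the range of $\psi$. The idea, following \cite{GM}, is to suppose that the superlevel set $\{\psi > b\}$ has positive measure and construct a competitor with strictly smaller energy, contradicting local minimality; the bound $a \leq \psi$ follows symmetrically using \eqref{A3-2}.

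First I would introduce the truncation of $\psi$ from above at level $b$, namely $w := \min\{\psi, b\}$, and note $w \in H^1(\Omega) \cap L^\infty(\Omega)$ with $\|w\|_{L^\infty} \leq 1$. Since truncation can only decrease the mean when $\{\psi > b\}$ has positive measure, one has $\overline{w} \leq m$, with equality only if $\psi \leq b$ a.e. To restore the mass constraint so that the competitor lies in $\mathcal{V}_m$, I would redistribute the ``excess mass'' $m - \overline{w} \geq 0$ onto the region where $\psi$ is small: concretely, pick a small parameter and push $w$ upward on the set where $\psi < m$ (which has positive measure unless $\psi \equiv m$, a case handled trivially), producing $w_\varepsilon \in \mathcal{V}_m$ with $\|w_\varepsilon - \psi\|_{H^1(\Omega)} \to 0$ as the redistribution parameter vanishes, so that $w_\varepsilon$ is admissible in the definition of local minimality for small $\varepsilon$. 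The gradient part of the energy does not increase under truncation (indeed $|\nabla w| \leq |\nabla \psi|$ a.e. and $a \geq a_m > 0$), and the redistribution contributes a gradient cost that is quadratically small; the decisive term is the change in $\int_\Omega \Psi(\psi)$.

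The key step is the sign of the first-order variation of $\int_\Omega \Psi$. Writing $\mathcal{E}(w_\varepsilon) - \mathcal{E}(\psi)$ and expanding to first order in $\varepsilon$, the potential contribution is, up to higher-order terms, $-\varepsilon \int_{\{\psi > b\}} (\psi - b)_+\,\Psi'(s_\psi)\,\d x + \varepsilon \int_{\{\psi < m\}} \Psi'(r_\psi)\,\d x$ for appropriate intermediate values $s_\psi \in (b,1)$, $r_\psi \in (-1,m)$, balanced so the masses match. By \eqref{A3}, $\Psi'(r_\psi) < \Psi'(s_\psi)$ pointwise, so this combination is strictly negative whenever $|\{\psi > b\}| > 0$; choosing $\varepsilon$ small enough that the positive $O(\varepsilon^2)$ gradient correction is dominated, we obtain $E(w_\varepsilon) < E(\psi)$, contradicting minimality. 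Hence $|\{\psi > b\}| = 0$, i.e. $\psi \leq b$ everywhere by continuity, and the symmetric argument using \eqref{A3-2} gives $\psi \geq a$. The last assertion is immediate: when $m \leq \alpha_0$ or $m \geq \beta_0$ one has $a = b = m$, forcing $\psi \equiv m$.

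The main obstacle is the bookkeeping in the mass-correction step: one must verify that redistributing the excess mass can be done with a perturbation that is small in $H^1(\Omega)$ (so that local minimality actually applies) while keeping the gradient overhead of order $\varepsilon^2$, and one must make the first-order expansion of the potential term rigorous rather than heuristic --- this requires care because $\Psi'$ is only Lipschitz away from $\pm 1$ and the truncation level $b$ may coincide with $m$. A clean way around the expansion is to argue directly with finite increments: compare $\int_\Omega \Psi(w_\varepsilon) - \Psi(\psi)$ using the mean value theorem on each region and the monotonicity/ordering \eqref{A3}, avoiding derivatives of the energy altogether, which is the route I would actually carry out.
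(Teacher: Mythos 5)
Your overall strategy (truncate, restore the mass, use the ordering \eqref{A3}--\eqref{A3-2} of $\Psi'$ to see the potential drop) is the right one, but as written it has a gap that is fatal for a \emph{local} minimizer. You truncate at the fixed level $b$, so $w=\min\{\psi,b\}$ satisfies
$\|w-\psi\|_{H^1(\Omega)}\geq \|(\psi-b)_+\|_{L^2(\Omega)}>0$,
a quantity that does not depend on your redistribution parameter $\varepsilon$; consequently $\|w_\varepsilon-\psi\|_{H^1(\Omega)}$ does \emph{not} tend to $0$, and the competitor $w_\varepsilon$ need not lie in the $H^1$-ball of radius $\nu$ where the minimality of $\psi$ is assumed. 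For the same reason the excess mass $m-\overline{w}=\frac{1}{|\Omega|}\int_\Omega(\psi-b)_+\,\d x$ is a fixed positive number rather than $O(\varepsilon)$, so there is no small parameter in which to carry out your ``first-order variation'' of $\int_\Omega\Psi$: both the truncation and the mass-restoring correction are $O(1)$ perturbations. Your argument would prove the bound for \emph{global} minimizers, but not for local ones.

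The paper's proof circumvents exactly this. Assuming for contradiction that (say) $\{\psi<a\}$ has positive measure, it chops $\psi$ from below at a nontrivial level $a_\eps\in(-1,a)$ taken near $\operatorname{ess\,inf}\psi$ and from above at a nontrivial level $m_\eps\in(m,1)$ near $\operatorname{ess\,sup}\psi$ (not at the fixed thresholds $a$, $b$); with these choices the double truncation simultaneously preserves the mean, is as small as desired in $H^1(\Omega)$ (so local minimality applies), and does not increase the gradient term since $\nabla A(u_\eps)$ vanishes on the chopped sets. The potential term is handled by first normalizing $\Psi'$ by an additive constant --- legitimate under the mass constraint, and possible precisely because of \eqref{A3-2} --- so that $\Psi'<0$ on $(-1,a)$ and $\Psi'>0$ on $(m,1)$; then both chops decrease $\Psi$ pointwise and the decrease is strict on the positive-measure set $\{\psi<a_\eps\}$, yielding $E(u_\eps)<E(\psi)$. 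If you want to salvage your one-sided truncation at $b$ you would have to restrict to global minimizers or replace the fixed level by a moving one near the essential supremum; the normalization of $\Psi'$ is also a cleaner substitute for your pointwise comparison of intermediate values $\Psi'(r_\psi)<\Psi'(s_\psi)$ under a mass-balance constraint. Your final observation that $a=b=m$ forces $\psi\equiv m$ when $m\leq\alpha_0$ or $m\geq\beta_0$ is correct and matches the paper.
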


\begin{proof}
Let us take $a\leq m$ as in \eqref{A3-2}. Due to the mass constraint, $\psi$ remains a local minimizer when a linear function is added to $\Psi$, or equivalently, when a
constant is added to $\Psi'$. Thus, in view of the second condition in \eqref{A3-2}, we may assume that
\begin{equation}
\label{210}\Psi' < 0 \,\text{ on }(-1,a), \quad \Psi' > 0 \,\text{ on } (m, 1).
\end{equation}
Assume by contradiction that
$\psi <a$ on a set of nonzero measure.
Choose arbitrarily $\eps>0$. Then, there are nontrivial values\footnote{$c$ is a nontrivial value for $u$ if the two sets $\{x\in\Omega: u(x)<c\}$ and $\{x\in\Omega: u(x)>c\}$ have both positive measure.} $a_\eps\in (-1,a)$ and $m_\eps\in (m,1)$ such that the function $\psi$ chopped above $m_\eps$ and below $a_\eps$, namely
$$
u_\eps(x)=\begin{cases} a_\eps& \psi(x) <a_\eps,\\
\psi(x)&a_\eps\leq  \psi(x)\leq m_\eps,\\
m_\eps & \psi(x)> m_\eps,
\end{cases}
$$
satisfies
\begin{equation}
\label{claim-matano}
\overline{u_\eps}=\overline{\psi}=m,\quad \text{and}\quad \|u_\eps-\psi\|_{H^1(\Omega)}<\eps.
\end{equation}
Since $a_\eps$ is a nontrivial value of $\psi$, the set
$D_\eps=\{x\in \Omega: \psi(x)<a_\eps\}$ 
has positive measure. Besides, $\nabla u_\eps=0$ on $D_\eps$, implying that $\nabla A(u_\eps)=\sqrt{a(u_\eps)}\nabla u_\eps=0$ on $D_\eps$. Therefore, it is apparent that
\begin{align*}
\int_\Omega a(u_\eps)|\nabla u_\eps|^2\, \d x
&=\int_\Omega |\nabla A(u_\eps)|^2\, \d x
=\int_{\Omega\setminus D_\eps}|\nabla A(u_\eps)|^2\, \d x
\leq \int_\Omega |\nabla A(\psi)|^2\, \d x.
\end{align*}
%
Besides, in light of \eqref{210}, we have
$$\int_\Omega \Psi(u_\eps) < \int_\Omega \Psi(\psi).$$
This yields $E(u_\eps) < E(\psi)$,
a contradiction since $\eps>0$ was arbitrarily chosen. A similar contradiction arises when
$\psi>b$ on a set of nonzero measure by using \eqref{A3}.
\end{proof}


We are now ready to prove the main result of this section.
\subsection{Proof of Theorem \ref{well-pos-3D} part (3)}$\,$

\smallskip

\textbf{Step 1. Bounds for initial data close to $\psi$}.
Since $\psi$ is a local minimizer of $E$ on $\mathcal{V}_m$, we already know that $\psi\in H^2(\Omega)$ and there exist $\xi\in (0,1)$ and $\chi>0$ such that
\begin{align}
\|\psi\|_{C(\overline{\Omega})} \leq 1-\xi,\quad\text{and}
\quad E(u)\geq E(\psi)\ \ \text{for all}\ u\in \mathcal{V}_m\cap H^2(\Omega):
\|u-\psi\|_{H^2(\Omega)}<\chi.\label{statta}
\end{align}
Recalling that
$
\|u\|_{C(\overline{\Omega})}\leq C_S \|u\|_{H^2(\Omega)}
$
due to the Sobolev embedding theorem
$H^2(\Omega)\hookrightarrow C(\overline{\Omega})$,
we set
 \begin{align}
\omega_0:=\min\left\{1, \frac{\xi}{3C_S}\right\}.\label{eta1}
\end{align}
Then, for any $u\in H^2(\Omega)$ such that
\begin{equation}
\label{basee}\|u-\psi\|_{H^2(\Omega)}\leq \omega_0,
\end{equation}
it follows from \eqref{statta} that
\begin{align}
\|u\|_{C(\overline{\Omega})}
\leq \|\psi\|_{C(\overline{\Omega})}
+\|u-\psi\|_{C(\overline{\Omega})}
\leq 1-\frac{2\xi}{3}.\label{00vpes}
\end{align}
Besides, by using \eqref{eta1} and \eqref{basee}, we find 
\begin{align}
 E(u)&\leq \frac{a_M}{2}\|\nabla u\|^2+ |\Omega|\max_{s\in[-1,1]}|\Psi(s)|\nonumber\\
 &\leq \frac{a_M}{2}(\|\psi\|_{H^2(\Omega)}+1)^2 + |\Omega|\max_{s\in[-1,1]}|\Psi(s)|
 := \gamma_1,\label{EpsiE}
\end{align}
where, along the proof, the constants $\gamma_i>0$ only depend on $\psi$. 

We now consider an initial datum $\phi_0\in H^2(\Omega)$  with $\partial_\n \vp_0=0$ on $\partial \Omega$, $\overline{\vp_0}=m$ satisfying
\begin{align}
&\|\vp_0-\psi\|_{H^2(\Omega)}\leq \eta,\label{boundeta}
\end{align}
for some $\eta\in (0, \omega_0)$ to be determined later (see \eqref{Eta1a}), 
and \eqref{MM-i}. Then, we have 
\begin{align}
&\|\mu_0\|_{H^1(\Omega)}\leq M,\label{bound0}
\end{align}
for a given $M>0$, where
$
\mu_0= -\div( a(\phi_0) \nabla \phi_0) + a'(\phi_0) \frac{|\nabla \phi_0|^2}{2}+\Psi'(\phi_0)
$. In particular, \eqref{00vpes} and \eqref{EpsiE} hold true with $u= \phi_0$ and the constant $\gamma_1$ is independent of the initial datum $\phi_0$.

\medskip
\textbf{Step 2. Strong solution on a finite interval}.
Owing to \eqref{00vpes}, \eqref{EpsiE} and \eqref{bound0}, we apply Proposition
\ref{CHstrong-solution} with $M$, $\gamma_1$ and $\delta_0:=\frac23\xi$.  As a consequence, there exists a universal time $\widetilde{T}>0$, depending only on $M$, $\gamma_1$ and $\delta_0$, such that problem
\eqref{CH1}-\eqref{CH-ic} with initial datum $\phi_0$ at $t=0$ admits a (unique) strong solution
$\vp: \Omega \times [0,\widetilde{T}] \to [-1,1]$. In particular, it follows from Proposition
\ref{CHstrong-solution} that
\begin{align}
\|\vp(t)\|_{C(\overline{\Omega})}\leq 1-\frac{\xi}{3},\quad \forall\, t\in[0,\widetilde{T}],\label{Lf2}
\end{align}
and that
\begin{equation}
\label{fiH33}
\sup_{t\in [0,\widetilde{T}]}\|\phi(t)\|_{H^3(\Omega)}=: M_1,
\end{equation}
for some $M_1>0$ depending on $M$, $\gamma_1$ and $m$ but independent of $\phi_0$.
Furthermore, in light of \eqref{est-mu-H1}, there exists $\gamma_2>0$ only depending on $E(\phi_0)$ (and in turn, only on $\psi$ by \eqref{EpsiE}), such that
\begin{equation}
\label{unimu}
\|\phi(t)\|_{H^1(\Omega)}\leq \gamma_2,\qquad
\|\mu(t)\|_{H^1}\leq \gamma_2(1+\|\nabla \mu(t)\|_{L^2(\Omega)}),\qquad \forall \, t\in [0,\widetilde{T}].
\end{equation}
Furthermore, from \eqref{Lf2} and \eqref{unimu}, there exists $\gamma_3>0$ such that
\begin{align}
&E(\vp_0)-E(\vp(t)) \notag
\\
&\quad =\frac12\int_\Omega a(\vp_0)(|\nabla \vp_0|^2-|\nabla\vp(t)|^2)\,\d x+\frac12\int_\Omega (a(\vp_0)-a(\vp(t)))|\nabla\vp(t)|^2\,\d x
+\int_\Omega \Psi(\vp_0)-\Psi(\vp(t))\, \d x
\notag \\
&\quad \leq \frac{a_M}{2}\|\nabla (\vp(t)+\vp_0)\|_{L^2(\Omega)}
\|\nabla (\vp(t)-\vp_0)\|_{L^2(\Omega)}
\notag\\
&
\qquad+\max_{s\in [-1,1]}|a'(s)|\|\vp(t)-\vp_0\|_{L^\infty(\Omega)}\|\nabla\vp(t)\|_{L^2(\Omega)}^2
+ \max_{s\in[-1+\frac{\xi}{3},1-\frac{\xi}{3}]}|\Psi'(s)|
  \|\vp(t)-\vp_0\|_{L^1(\Omega)}
  \notag
  \\
& \quad \leq \gamma_3\|\vp(t)-\vp_0\|_{H^2(\Omega)}, \quad \forall\, t\in [0,\widetilde{T}].\label{edd}
\end{align}
In the rest of the proof, we will assume without loss of generality that
\begin{equation}
\label{Mgrande}
M>2\gamma_2.
\end{equation}

\medskip
\textbf{Step 3. Determining $\eta$}. Let $\epsilon>0$ be given, and define
\begin{equation}
\label{zeta}
\omega:=\min \left\{\omega_0, \ \epsilon,\ \chi,\ \beta,\
\frac{\widetilde{T}b_m}{6 \gamma_3}\right\},
\end{equation}
where $\beta$ is given by Theorem \ref{LSg-intro} and $b_m$ as in \eqref{m-ndeg}.
We claim that there exist $\eta>0$, depending on $\omega$, such that
\begin{align}
\label{vH2e}
\|\vp_0-\psi\|_{H^2(\Omega)}\leq \eta\quad \Longrightarrow\quad \|\vp(t)-\psi \|_{H^2(\Omega)} \leq \omega, \quad \forall \,  t\in [0,\widetilde{T}].
\end{align}
In particular, the solution always remains close within $\epsilon$ to $\psi$ as stated in Theorem \ref{well-pos-3D}- part (3).

\smallskip
To this aim, for  $\eta\in (0, \frac{\omega}{2}]$ we define
$$
T_\eta=\inf\{t>0:\ \|\vp(t)-\psi\|_{H^2(\Omega)}\geq \omega \}.
$$
Let $T_\eta< \widetilde{T}$. We recall that $E(\vp(t))$ is
non-increasing and that $E(\vp(t))\geq E(\psi)$ on $[0,T_\eta]$ since $\omega <\chi$. Then, we are in the position to apply Theorem \ref{LSg-intro} to derive on the interval
$[0,T_\eta]\subset [0,\widetilde{T}]$
\begin{align*}
 -\ddt \left( E(\vp(t))-E(\psi) \right)^\theta&=
 -\theta \left( E(\vp(t))-E(\psi) \right)^{\theta-1}\ddt E(\vp(t))
 \non\\
&\geq \frac{\theta}{C_L}
\frac{ \norm{ \sqrt{b(\phi(t))} \nabla \mu(t)}_{L^2(\Omega)}^2 }{\norm{\mu(t)-\overline{\mu(t)}}_{L^2(\Omega)} }
\geq 
\frac{\theta \, b_m}{C_P C_L}  \|\nabla \mu (t) \|_{L^2(\Omega)}\geq \frac{1}{C_1} \|\partial_t\vp (t)\|_{H^{-1}_{(0)}(\Omega)}.
 \end{align*}
Here, we have used \eqref{m-ndeg}, \eqref{poincare} and \eqref{est-phit-H-1}. 
The constant $C_1$ depends only on $b_m$, $\Omega$ and the parameters $\theta$ and $C_L$ from Theorem \ref{LSg-intro}.
Therefore, 
an integration on the time interval $(0,T_\eta)$, together with \eqref{edd}, gives that
 \begin{equation}
 \int_0^{T_\eta}\|\partial_t \vp (\tau)\|_{H^{-1}_{(0)}(\Omega)} \, \d \tau\leq
C_1\big(E(\vp_0)-E(\psi)\big)^\theta\leq \gamma_4 \|\vp_0-\psi\|_{H^2(\Omega)}^\theta.\nonumber
 \end{equation}
Exploiting the $H^3$-bound in \eqref{fiH33}, we then deduce the following control
 \begin{align*}
  \|\vp(T_\eta)-\psi\|_{H^2(\Omega)}
 &\leq \|\vp_0-\psi\|_{H^2(\Omega)}
 +\|\vp(T_\eta)-\vp_0\|_{H^2(\Omega)}\non\\
 &\leq \|\vp_0-\psi\|_{H^2(\Omega)}
 + C_2 \|\vp(T_\eta)-\vp_0\|_{H^3(\Omega)}^\frac34
 \|\vp(T_\eta)-\vp_0\|_{H^{-1}_{(0)}(\Omega)}^\frac14 \non\\
 &\leq \|\vp_0-\psi\|_{H^2(\Omega)}
 +  C_2 (2 M_1)^\frac34\left(\int_0^{T_\eta}\|\partial_t\vp(\tau)\|_{H^{-1}_{(0)}(\Omega)}\, \d \tau
 \right)^\frac14\non\\
 &\leq\|\vp_0-\psi\|_{H^2(\Omega)}+2^\frac34 C_2 M_1^{\frac34}\gamma_4^{\frac14}
 \|\vp_0-\psi\|_{H^2(\Omega)}^\frac{\theta}{4}\non\\
 &\leq \eta+2 C_2 M_1^{\frac34}\gamma_4^{\frac14}
 \eta^\frac{\theta}{4},
\end{align*}
for some interpolation constant $C_2$.
Hence, setting
\begin{align}
\eta:=\min\left\{\frac{\omega}{2},
\left(\frac{\omega}{8 C_2 M_1^{\frac34}\gamma_4^{\frac14}}\right)^\frac{4}{\theta}\right\},\label{Eta1a}
\end{align}
 we find $\|\vp(T_\eta)-\psi\|_{H^2(\Omega)}\leq \frac{3}{4}\omega<\omega$, which yields
 a contradiction with the definition of $T_\eta$.
 As a consequence, for $\eta$ given by \eqref{Eta1a} it holds that $T_\eta\geq \widetilde{T}$, and we learn that
$$
\|\vp(t)-\psi \|_{H^2(\Omega)} \leq \omega, \quad \forall \,  t\in [0, \widetilde{T}],$$
as claimed in \eqref{vH2e}. In turn, since $\omega\leq\frac{\xi}{3C_S}$, we obtain from \eqref{Lf2} that
 \begin{align}
 &\|\vp(t)\|_{C(\overline{\Omega})}
 \leq \|\psi\|_{C(\overline{\Omega})}
 +C_S\|\vp(t)-\psi\|_{H^2(\Omega)}
 \leq 1-\frac{2\xi}{3}, \quad 
\forall \, t\in [0,\widetilde{T}]. 
 \label{dfdf1}
 \end{align}
Furthermore, observing that
\begin{equation*}
\|\vp(t)-\vp_0\|_{H^2(\Omega)}
 \leq \|\vp(t)-\psi\|_{H^2(\Omega)}
 +\|\vp_0-\psi\|_{H^2(\Omega)}
 \leq \frac32\omega,
 \end{equation*}
by \eqref{edd} we can estimate  the energy drop on $[0,\widetilde{T}]$ as
 \begin{align}
 E(\vp_0)-E(\vp(\widetilde{T}))\leq \frac32\omega\gamma_3. \label{energydrop}
 \end{align}
By exploiting \eqref{energydrop} in the energy inequality \eqref{EE}, we find that
$$ 
\int_0^{\widetilde{T}} \mathcal{Y}(\tau) \, \d\tau\leq E(\vp_0)-E(\vp(\widetilde{T}))\leq \frac32\omega\gamma_3,
$$
where $\mathcal{Y}=\int_\Omega b(\phi)|\nabla \mu|^2 \, \d x$.
Since the function $\mathcal{Y}(\cdot)$ is non-negative, there exists
$t^*\in \left[ \frac{\widetilde{T}}{2}, \widetilde{T}\right]$ such that
 \begin{equation}
 \label{LAMA}
 \mathcal{Y}(t^*)\leq \frac{2}{\widetilde{T}}\int_{\frac{\widetilde{T}}{2}}^{\widetilde{T}} \mathcal{Y}(\tau) \, \d\tau\leq
 \frac{2}{\widetilde{T}}\cdot \frac32\omega \gamma_3 < b_m,
 \end{equation}
where the last inequality follows from the definition of $\omega$ (cf. \eqref{zeta}).
Since $b(\phi)\geq b_m$, we deduce the control
$b_m\|\nabla\mu(t^*)\|_{L^2(\Omega)}^2\leq \mathcal{Y}(t^*)$, which in light of \eqref{LAMA} gives
$$\|\nabla\mu(t^*)\|_{L^2(\Omega)}^2<1.$$
Therefore, invoking \eqref{unimu} and \eqref{Mgrande} we end up with
  \begin{align}
 \|\mu(t^*)\|_{H^1(\Omega)}\leq  \gamma_2\left( 1+\|\nabla \mu(t^*)\|_{L^2(\Omega)}\right)
 < 2\gamma_2<M.\label{H3new}
 \end{align}

 \medskip
\textbf{Step 4. Iteration argument}.
We have shown that $\mu(t^*)$ satisfies the same bound
of $\mu_0$ (compare \eqref{bound0} with \eqref{H3new}).
Moreover, by \eqref{dfdf1}, $\vp(t^*)$ complies with the separation property \eqref{00vpes}.

Now, we go back to Step 1 taking $t=t^*$ (instead of $t=0$) as a new initial time, with
$\vp(t^*)$ (and $\mu(t^*)$) as initial datum instead of $\phi_0$ and $\mu_0$. Observing that the same bounds as in Step 1 hold true, we can apply Proposition \ref{CHstrong-solution} with the same $M$ and $\delta_0$ as in Step 2. As a result, problem
\eqref{CH1}-\eqref{CH-bc} with initial datum $\phi(t^*)$ at the initial time $t=t_*$ admits a (unique) strong solution, which is
defined on the interval $[t^*, t^*+\widetilde{T}]$. Notice that $\widetilde{T}$ is exactly the same as for $\phi_0$.

Thanks to the uniqueness of strong solutions, we conclude that $\phi$ (given in Step 2) is indeed defined on the extended interval $[0,t^*+\widetilde{T}]$, which contains in particular the interval $[0,\frac32\widetilde{T}]$.

At this point, we repeat the argument in Step 3 on $[0,\frac32\widetilde{T}]$, and we derive the same refined estimates \eqref{vH2e}-\eqref{energydrop} on  $[0,\frac32\widetilde{T}]$ under exactly the same choice of $\eta$ given by \eqref{Eta1a}. Again, we will find a time $t^{**}\in [\widetilde{T},\frac32 \widetilde{T}]$ such that $\mathcal{Y}(t^{**})<b_m$ as in \eqref{LAMA} and thus $\|\mu(t^{**})\|_{H^1(\Omega)}<M$ as in \eqref{H3new}. Then, we can take $t^{**}$ as the new initial time to repeat the above procedure, thus extending the unique local strong solution to the longer interval $[0, 2\widetilde{T}]$ with the same uniform estimates on the whole $[0, 2\widetilde{T}]$. By iterating the process, we easily arrive at the conclusion of Theorem \ref{well-pos-3D} part (3).

\medskip
Lastly, the convergence as $t\to\infty$ of the global trajectory to a single stationary state can be proved as in the case $d=2$, following exactly the arguments in Section \ref{s-convergence}, where the dimension plays no role. This concludes the proof of Theorem \ref{well-pos-3D}.

\bigskip

\noindent
\textbf{Acknowledgments.}
A. Giorgini and M. Conti are supported by the MUR grant Dipartimento di Eccellenza 2023-2027 of Dipartimento di Matematica, Politecnico di Milano.
This work is partially supported by the GNAMPA (Gruppo Nazionale per l'Analisi Ma\-te\-ma\-ti\-ca, la Probabilit\`{a} e le loro Applicazioni) of INdAM (Istituto Nazionale di Alta Matematica) project CUP E5324001950001. 

%
%
%


\begin{thebibliography}{99}
\itemsep=1pt


\bibitem{ABELS2009}
{\au H.~Abels},
{\ti On a diffuse interface model for two-phase flows of viscous,
incompressible fluids with matched densities},
{\jou Arch.\ Ration.\ Mech.\ Anal.}
\no{194}{463--506}{2009}

\bibitem{ADG2013}
{\au H.~Abels, D.~Depner, H.~Garcke},
{\ti Existence of weak solutions for a diffuse interface model for two-phase flows of incompressible fluids with different densities},
{\jou J.\ Math.\ Fluid Mech.}
\no{15}{453--480}{2013}

\bibitem{ADG2013-2}
{\au H.~Abels, D.~Depner, H.~Garcke},
{\ti On an incompressible {N}avier-{S}tokes/{C}ahn-{H}illiard system with degenerate mobility},
{\jou Ann.\ Inst.\ H.\ Poincar\'{e} Anal. Non Lin\'{e}aire}
\no{30}{1175--1190}{2013}


\bibitem{AGG2024}
{\au H.~Abels, H.~Garcke, A.~Giorgini},
{\it Global regularity and asymptotic stabilization for the incompressible
{N}avier-{S}tokes-{C}ahn-{H}illiard model with unmatched densities},
{\jou Math.\ Ann.}
\no{389}{1267--1321}{2024}


\bibitem{AW07}
{\au H.~Abels, M.~Wilke},
{\ti Convergence to equilibrium for the Cahn-Hilliard equation with a logarithmic free energy},
{\jou Nonlinear Anal.} 
\no{67}{3176--3193}{2007}

\bibitem{S2} 
{\au E.C.~Aifantis}, 
{\ti On the mechanics of phase-transformations}, 
in Phase Transformations, E.C. Aifantis and J. Gittus, eds., Elsevier Applied Science Publishers, London (1986) 233--289.

\bibitem{BB1999}
{\au J.W.~Barrett, J.F.~Blowey},
{\ti Finite element approximation of the {C}ahn-{H}illiard equation with concentration dependent mobility},
{\jou Math.\ Comp.}
\no{68}{487--517}{1999}

\bibitem{BF2013}
{\au F. Boyer, P. Fabrie},
{\bk Mathematical Tools for the Study of the Incompressible Navier-Stokes Equations and Related Models},
\eds{Applied Mathematical Sciences, vol.183, Springer}{New York}{2013}


%

\bibitem{CHILL}
{\au R. ~Chill},
{\ti On the Lojasiewicz-Simon gradient inequality},
{\jou J. \ Funct. \ Anal.}
\no{201}{572--601}{2003}

\bibitem{CMN2019}
{\au C.~Canc{\`e}s, D.~Matthes, F.~Nabet},
{\ti A two-phase two-fluxes degenerate {C}ahn-{H}illiard model as constrained Wasserstein gradient flow},
{\jou Arch.\ Ration.\ Mech.\ Anal.}
\no{233}{837--866}{2019}

\bibitem{CH1998}
{\au T.~Cazenave, A.~Haraux},
{\bk An {I}ntroduction to {S}emilinear {E}volution {E}quations},
\eds{Oxford Lecture Series in Mathematics and its Applications,
vol. 13, The Clarendon Press Oxford University Press}{New York}{1998}

\bibitem{CGGG2025}
{\au M.~Conti, P.~Galimberti, S.~Gatti, A.~Giorgini},
{\ti New results for the {C}ahn-{H}illiard equation with non-degenerate mobility: well-posedness and longtime behavior},
{\jou Calc. Var.}
\textbf{64}, 87 (2025).

\bibitem{CG}
{\au M.~Conti, A.~Giorgini},
{\it Well-posedness for the {B}rinkman-{C}ahn-{H}illiard system with unmatched viscosities},
{\jou  J.\ Differential Equations}
\no{268}{6350--6384}{2020}

\bibitem{DD2016}
{\au S.~Dai, Q.~Du},
{\ti Weak solutions for the {C}ahn-{H}illiard equation with degenerate mobility}, {\jou Arch. Ration. Mech. Anal.}
\no{219}{1161--1184}{2016}

\bibitem{DD}
{\au A.~Debussche, L.~Dettori},
{\ti On the {C}ahn-{H}illiard equation with a logarithmic free energy},
{\jou Nonlinear Anal.}
\no{24}{1491--1514}{1995}

\bibitem{S23} 
{\au P.G.~de Gennes}, 
{\ti Dynamics of fluctuations and spinodal decomposition in polymer blends},
{\jou J. Chem. Phys.} 
\no{72}{4756--4763}{1980}

\bibitem{EL1991}
{\au C.M.~Elliott, S.~Luckhaus},
{\ti A generalized diffusion equation for phase separation of a multi component mixture with interfacial free energy},
IMA Preprint Series \# 887, 1991.

\bibitem{EG1996}
{\au C.M.~Elliott, H.~Garcke},
{\ti On the {C}ahn-{H}illiard equation with degenerate mobility},
{\jou SIAM J.\ Math.\ Anal.}
\no{27}{404--423}{1996}

\bibitem{GGG2023}
{\au C.G.~Gal, A.~Giorgini, M.~Grasselli},
{\it The separation property for 2D {C}ahn-{H}illiard equations:
Local, nonlocal, and fractional energy cases},
{\jou Discrete Contin.\ Dyn.\ Syst.}
\no{43}{2270--2304}{2023}

\bibitem{GGW2018}
{\au A.~Giorgini, M.~Grasselli, H.~Wu},
{\ti The {C}ahn-{H}illiard-{H}ele-{S}haw system with singular potential},
{\jou Ann.\ Inst.\ H.\ Poincar\'{e} Anal.\ Non Lin\'{e}aire}
\no{35}{1079--1118}{2018}

\bibitem{GMT2019}
{\au A.~Giorgini, A.~Miranville, R.~Temam},
{\ti Uniqueness and Regularity for the {N}avier-{S}tokes-{C}ahn-{H}illiard system},
{\jou SIAM J.\ Math.\ Anal.}
\no{51}{2535--2574}{2019}

\bibitem{GS}
{\au G.~Gompper, M.~Schick}, 
{\ti Correlation between structural and interfacial properties of amphiphilic systems}, 
{\jou Phys. Rev. Lett.} 
\no{65}{1116--1119}{1990}

\bibitem{GM}
{\au M.~Gurtin, H.~Matano},
{\ti On the structure of equilibrium phase transitions
within the gradient theory of fluids},
{\jou Quarterly App. Math.}
\no{2}{301--317}{1988}

\bibitem{HW2021}
{\au J.-N.~He, H.~Wu},
{\ti Global well-posedness of a {N}avier-{S}tokes-{C}ahn-{H}illiard system with chemotaxis and singular potential in 2{D}},
{\jou J.\ Differential Equations}
\no{297}{47--80}{2021}

\bibitem{HR1999}
{\au K.-H.~Hoffmann, P.~Rybka},
{\ti Convergence of solutions to {C}ahn-{H}illiard equation},
{\jou Comm.\ Partial Differential Equations}
\no{24}{1055--1077}{1999}

\bibitem{S36}
{\au E.A.~Lass, W.C.~Johnson, G.J.~Shiflet}, 
{\ti Correlation between CALPHAD data and the {C}ahn-{H}illiard gradient energy coefficient $\kappa$ and exploration into its composition dependence},
{\jou Calphad} 
\no{30}{42--52}{2006}

\bibitem{LMS2012}
{\au S.~Lisini, D.~Matthes, G.~Savar\'{e}},
{\ti {C}ahn-{H}illiard and thin film equations with nonlinear
mobility as gradient flows in weighted-Wasserstein metrics,}
{\jou J. Differential Equations}
\no{253}{814--850}{2012}

\bibitem{MIR2019}
{\au A.~Miranville},
{\bk The {C}ahn-{H}illiard equation: recent advances and applications},
Society for Industrial and Applied Mathematics, 2019.

\bibitem{MZ}
{\au A.~Miranville, S.~Zelik},
{\ti Robust exponential attractors for {C}ahn-{H}illiard type equations with singular potentials},
{\jou Math.\ Methods Appl.\ Sci.}
\no{27}{545--582}{2004}


\bibitem{RUPP}
{\au F.~Rupp},
{\ti On the Lojasiewicz–Simon gradient inequality on submanifolds},
{\jou J.\ Funct.\ Anal.}
\no{279}{108708}{2020}

\bibitem{S2007}
{\au G.~Schimperna},
{\ti Global attractors for {C}ahn-{H}illiard equations with nonconstant mobility}, {\jou Nonlinearity}
\no{20}{2365}{2007}

\bibitem{SP2013}
{\au G.~Schimperna, I.~Pawlow},
{\ti On a class of {C}ahn-{H}illiard models with nonlinear diffusion},
{\jou SIAM\ J.\ Math.\ Anal.}
\no{45}{31--63}{2013}

\bibitem{SZ2013}
{\au G.~Schimperna, S.~Zelik},
{\ti Existence of solutions and separation from singularities for a class of fourth order degenerate parabolic equations},
{\jou Trans.\ Amer.\ Math.\ Soc.}
\no{365}{3799--3829}{2013}

\bibitem{SIMON}
{\au L. Simon},
{\ti Asymptotics for a class of nonlinear evolution equation with applications to geometric problems},
{\jou Ann. Math.}
\no{118}{525--571}{1983}


\bibitem{TEMAM}
{\au R.~Temam},
{\bk Infinite-dimensional dynamical systems in mechanics and physics}, \eds{Springer-Verlag}{New York}{1997}


\end{thebibliography}
\end{document}